\makeatletter \define@key{meshkeys}{u}{\def\myu{#1}}
 \numberwithin{figure}{section}
\numberwithin{table}{section} \numberwithin{equation}{section}
 \newtheorem{theorem}{Theorem}[section]
\newtheorem{proposition}[theorem]{Proposition}
\newtheorem{lemma}[theorem]{Lemma}
\newtheorem{corollary}[theorem]{Corollary} \theoremstyle{definition}
\newtheorem{definition}[theorem]{Definition}
\newtheorem{conjecture}[theorem]{Conjecture}
\newtheorem{question}[theorem]{Question}
\newtheorem{example}[theorem]{Example}
\newtheorem*{conventions}{Conventions}
 \theoremstyle{remark}
\DeclareSymbolFont{sfoperators}{OT1}{cmss}{m}{n}
\DeclareSymbolFontAlphabet{\mathsf}{sfoperators} \makeatletter
\def\operator@font{\mathgroup\symsfoperators} \makeatother
\DeclareMathOperator{\Span}{span}
\DeclareMathOperator{\GGG}{G} 
 \DeclareMathOperator{\CCC}{C}
\DeclareMathOperator{\Q}{Q} \DeclareMathOperator{\add}{add}
\DeclareMathOperator{\ann}{ann} \DeclareMathOperator{\Aut}{Aut}
\DeclareMathOperator{\ind}{ind} \DeclareMathOperator{\rad}{rad}
\DeclareMathOperator{\proj}{proj} \DeclareMathOperator{\Fac}{Fac}
\DeclareMathOperator{\Sub}{Sub} 
\DeclareMathOperator{\tors}{tors}
\renewcommand{\Im}{\operatorname{Im}}
\DeclareMathOperator{\torsf}{torf}
\DeclareMathOperator{\ftorsf}{f-torf}
\DeclareMathOperator{\ftors}{f-tors} 
\DeclareMathOperator{\Filt}{Filt} \DeclareMathOperator{\Hom}{Hom} \DeclareMathOperator{\Rad}{Rad}
\DeclareMathOperator{\End}{End} \DeclareMathOperator{\Ext}{Ext}
\DeclareMathOperator{\coind}{coind}
\DeclareMathOperator{\Coker}{Coker} 
\DeclareMathOperator{\thick}{thick} \DeclareMathOperator{\silt}{silt}
\DeclareMathOperator{\ctilt}{c-tilt}
\DeclareMathOperator{\brick}{brick}
\DeclareMathOperator{\fbrick}{f-brick}
\DeclareMathOperator{\TTT}{T}
\newcommand{\itrigid}{\tau\operatorname{-rigid}}
\DeclareMathOperator{\presilt}{presilt} 
\newcommand{\op}{\mathrm{op}}
\newcommand{\RR}{\mathbb{R}}
\renewcommand{\max}{\operatorname{max}}
\DeclareMathOperator{\ttilt}{\tau-tilt}
\DeclareMathOperator{\sttilt}{s\tau-tilt}
\newcommand{\twosilt}[1]{2_{#1}\operatorname{-silt}}
\newcommand{\ttwosilt}{2\operatorname{-silt}}
\newcommand{\twopresilt}[1]{2_{#1}\operatorname{-presilt}}
\newcommand{\ttop}{\operatorname{top}\nolimits}
\renewcommand{\mod}{\operatorname{mod}\nolimits}
\newcommand{\KKK}[1]{\operatorname{K}^{\mathrm{b}}(\proj #1)}
 \newcommand{\ie}{\emph{i.e.}~}
\newcommand{\eg}{\emph{e.g.}~}
\newcommand{\ZZ}{\mathbb{Z}}
\newcommand{\cat}[1]{\mathcal{#1}} 
\newcommand{\E}{\cat{E}} \newcommand{\C}{\cat{C}}
\newcommand{\F}{\cat{F}} 
 \renewcommand{\S}{\cat{S}}
\newcommand{\T}{\cat{T}} 
\newcommand{\X}{\cat{X}} \newcommand{\Y}{\cat{Y}}
\DeclareMathOperator{\Tr}{Tr}
\newcommand{\Tors}[1]{\operatorname{T}(#1)}
 \newcommand{\xla}[1]{\xleftarrow{#1}}
\newcommand{\xto}[1]{\xrightarrow{#1}} \newcommand{\set}[1]{\{#1\}}
\newcommand{\setP}[2]{\{#1\mid #2\}} \newcommand{\lperp}{{}^{\perp}}
\newcommand{\gvec}[1]{%
  \begin{pmatrix}%
	#1%
  \end{pmatrix}%
}
\newsavebox\locboxinminipage \newlength\locboxinminipagel
\newcommand{\boxinminipage}[1] {%
  \sbox\locboxinminipage{#1}%
  \settowidth\locboxinminipagel{\usebox{\locboxinminipage}}%
  \begin{minipage}{\locboxinminipagel}\usebox{\locboxinminipage}\end{minipage}%
}
\newenvironment{gnew}{\bgroup\color{blue}}{\egroup}
\newenvironment{gold}{\bgroup\color{red}}{\egroup}
\begin{document}

\title[$\tau$-tilting finite algebras]{$\tau$-tilting finite algebras,
 bricks and $g$-vectors}
 
\author[L. Demonet]{Laurent Demonet}
\email{Laurent.Demonet@normalesup.org}
\urladdr{http://www.math.nagoya-u.ac.jp/~demonet/}

\author[O. Iyama]{Osamu Iyama} \email{iyama@math.nagoya-u.ac.jp}
\urladdr{http://www.math.nagoya-u.ac.jp/~iyama/}

\author[G. Jasso]{Gustavo Jasso} \email{gjasso@math.uni-bonn.de}
\urladdr{http://gustavo.jasso.info}

\address[Demonet and Iyama]{Graduate School of Mathematics, Nagoya University.
  Chikusa-ku, Furo-cho, Nagoya, 464-8602, Japan}

\address[Jasso]{Mathematisches Institut, Universit\"at Bonn, Endenicher
  Allee 60, 53115 Bonn, Germany}

\thanks{The authors would like to thank Alexandra Zvonareva for
  pointing out a crucial typo in the statement of
  \th\ref{char-tau-rigid-finite} in a previous version of this
  article. The first named author was partially supported by JSPS
  Grant-in-Aid for Young Scientists (B) 26800008.  The second named
  author was partially supported by JSPS Grant-in-Aid for Scientific
  Research (B) 24340004, (B) 16H03923, (C) 23540045,
(S) 22224001 and (S) 15H05738. The third
  named author was supported by Consejo Nacional de Ciencia y
  Tecnolog\'ia: Beca para estudios en el extranjero no. 310793.}

\subjclass[2010]{18E40 (primary); 16G20 (secondary).}

\begin{abstract}
  The class of support $\tau$-tilting modules was introduced to
  provide a completion of the class of tilting modules from the point
  of view of mutations. In this article we study $\tau$-tilting finite
  algebras, i.e. finite dimensional algebras $A$ with finitely many
  isomorphism classes of indecomposable $\tau$-rigid modules. We show
  that $A$ is $\tau$-tilting finite if and only if every torsion class
  in $\mod A$ is functorially finite.  We observe that cones generated
  by $g$-vectors of indecomposable direct summands of each support
  $\tau$-tilting module form a simplicial complex $\Delta(A)$.  We
  show that if $A$ is $\tau$-tilting finite, then $\Delta(A)$ is
  homeomorphic to an $(n-1)$-dimensional sphere, and moreover the
  partial order on support $\tau$-tilting modules can be recovered
  from the geometry of $\Delta(A)$.  Finally we give a bijection
  between indecomposable $\tau$-rigid $A$-modules and bricks of $A$
  satisfying a certain finiteness condition, which is automatic for
  $\tau$-tilting finite algebras.
\end{abstract}

\maketitle

\section{Introduction}

Let $A$ be a finite dimensional algebra over an arbitrary field $K$. The class of support
$\tau$-tilting $A$-modules was introduced recently in
\cite{adachi_tau-tilting_2014} so as to complete the class of tilting
modules from the viewpoint of mutations.  This class of modules was
extensively studied in \cite{angeleri-hugel_silting_2014} for
infinitely generated modules.  The aim of this article is to study
algebras which only have finitely many basic support $\tau$-tilting
modules up to isomorphism. We also give general results on simplicial
complexes and $g$-vectors associated with support $\tau$-tilting
modules.

Algebras having finitely many isomorphism classes of basic tilting
modules have been successfully investigated, see for example
Riedtmann--Schofield \cite{riedtmann_simplicial_1991}, Unger
\cite{unger_shellability_1999} and Ingalls--Thomas
\cite{ingalls_noncrossing_2009}. In this case, the class of (support)
tilting modules enjoys particularly nice combinatorial properties.
With this motivation, we introduce the following class of algebras
which are our main concern in this article. Let $A$ be a finite
dimensional algebra. We remind the reader that an $A$-module $M$ is
\emph{$\tau$-rigid} if $\Hom_A(M,\tau M)=0$, where $\tau M$ denotes
the Auslander--Reiten translate of $M$; such a $\tau$-rigid $A$-module
$M$ is called \emph{$\tau$-tilting} if the number $|M|$ of
non-isomorphic indecomposable direct summands of $M$ coincides with
the number of isomorphism classes of simple $A$-modules. A
\emph{support $\tau$-tilting $A$-module} is a $\tau$-tilting
$(A/\langle e\rangle)$-module for some idempotent $e\in A$. As an
immediate consequence of the work of a number of authors
\cite{adachi_tau-tilting_2014,koenig_silting_2014,marks_torsion_2015},
isomorphism classes of support $\tau$-tilting $A$-modules are known to
be in bijective correspondence with several sets of important objects
in representation theory, i.e. certain explicitly described classes
of torsion classes, $t$-structures, silting complexes, co-$t$-structures,
simple minded collections and wide subcategories, respectively.

\begin{definition}
  \th\label{tau-rigid_finite_algebra} Let $A$ be a finite dimensional
  algebra. We say that $A$ is \emph{$\tau$-tilting finite} if there
  are only finitely many isomorphism classes of basic $\tau$-tilting
  $A$-modules
\end{definition}

In \th\ref{char-tau-rigid-finite} we give equivalent conditions for an
algebra to be $\tau$-tilting finite.  In particular, $A$ is
$\tau$-tilting finite if and only if there exist only finitely many
isomorphism classes of indecomposable $\tau$-rigid $A$-modules if and
only if there exist only finitely many isomorphism classes of basic
support $\tau$-tilting $A$-modules.

For example, local algebras and representation finite algebras are
$\tau$-tilting finite. Also, $\tau$-tilting finite algebras whose
radical square vanishes are characterized in
\cite{adachi_characterizing_2014}. In addition, preprojective algebras
of Dynkin quivers are known to be $\tau$-tilting finite
\cite{mizuno_classifying_2014}. Note that all these classes of
algebras contain algebras which are of infinite representation
type. Further information can be found in
\cite{adachi_tilting_2015,eisle_reduction_2016,iyama_classifying_2016,MR3428959,kase_weak_2016,MR3490664,zhang_tau-rigid_2012}.

One of the main results in \cite{adachi_tau-tilting_2014} is that the
map
\[
  M\mapsto \Fac M:=\setP{N\in\mod A}{\exists M^n\twoheadrightarrow N}.
\]
induces a bijection between the set $\sttilt A$ of isomorphism classes
of basic support $\tau$-tilting $A$-modules and the set of
functorially finite torsion classes in $\mod A$, see
\cite[Thm. 2.7]{adachi_tau-tilting_2014}.

After recalling basic definitions and results in Section
\ref{sec:preliminaries}, in Section \ref{sec:tau-rigid finite
  algebras} we investigate further the relationship between support
$\tau$-tilting $A$-modules and torsion classes.  The following result
can be regarded as a $\tau$-tilting-theoretic analogue of
Auslander--Reiten's result stating that a finite dimensional algebra
$A$ is representation-finite if and only if every subcategory of
$\mod A$ is functorially finite
\cite[Prop. 1.2]{auslander_applications_1991}.

\begin{theorem}[see \th\ref{all torsion classes are functorially finite}]
  \th\label{simple-main} Let $A$ be a finite dimensional
  algebra. Then, $A$ is $\tau$-tilting finite if and only if every
  torsion class (equivalently, torsion-free class) in $\mod A$ is
  functorially finite.
\end{theorem}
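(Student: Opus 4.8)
The forward direction is essentially immediate from the cited bijection: if $A$ is $\tau$-tilting finite, then there are only finitely many functorially finite torsion classes, so it suffices to show that every torsion class is functorially finite. The plan is to show that under $\tau$-tilting finiteness, \emph{every} torsion class is in fact functorially finite by exhibiting it as a $\Fac M$ for a suitable support $\tau$-tilting module. I would argue as follows: given an arbitrary torsion class $\T$, consider the (finitely many) indecomposable $\tau$-rigid modules lying in $\T$; take $M$ to be the direct sum of a maximal $\tau$-rigid family among them, completed to a support $\tau$-tilting module inside $\T$ using Bongartz-type completion relative to $\T$. One then checks $\Fac M = \T$: the inclusion $\Fac M \subseteq \T$ is clear since $\T$ is closed under quotients, and for the reverse inclusion, if some $N \in \T$ were not in $\Fac M$, one produces (via the theory of minimal extending modules / left approximations, \cf the mutation theory in \cite{adachi_tau-tilting_2014}) a $\tau$-rigid module in $\T$ not in $\add M$, contradicting maximality. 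Since functorially finite torsion classes correspond bijectively to functorially finite torsion-free classes via the usual duality ($\T \mapsto \T^{\perp}$, using $D = \Hom_k(-,k)$ and passing to $A^{\op}$), the parenthetical equivalence with torsion-free classes is automatic, and the whole statement is left-right symmetric in $A$ and $A^{\op}$.

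For the converse, suppose $A$ is \emph{not} $\tau$-tilting finite; I must construct a torsion class that is not functorially finite. By Bongartz completion there are infinitely many indecomposable $\tau$-rigid modules, hence infinitely many pairwise non-isomorphic basic support $\tau$-tilting modules, hence infinitely many distinct functorially finite torsion classes $\Fac M$. The key point is that these functorially finite torsion classes are \emph{not} closed under arbitrary intersections or unions in a way that stays functorially finite: the plan is to take a suitable infinite chain (or directed union) of functorially finite torsion classes and show its union, or rather the smallest torsion class containing all of them, fails to be functorially finite. Concretely, I expect to order the support $\tau$-tilting modules by the partial order on $\sttilt A$ (equivalently, inclusion of the associated torsion classes) and find an infinite increasing chain $\T_1 \subsetneq \T_2 \subsetneq \cdots$; its union $\T = \bigcup_i \T_i$ is a torsion class (torsion classes are closed under directed unions), and one shows $\T$ cannot be functorially finite because a functorially finite torsion class must be of the form $\Fac M$ for a support $\tau$-tilting $M$, and such a class is the smallest torsion class containing the finitely many modules $\Fac M$; tracking the $g$-vectors / the finitely many indecomposable summands of $M$ against the infinitely many indecomposable $\tau$-rigid modules appearing in the $\T_i$ yields a contradiction.

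The main obstacle is the converse, and specifically the verification that the constructed torsion class is genuinely \emph{not} functorially finite — it is not enough that it fails to be some $\Fac M$; one must rule out functorial finiteness directly. The cleanest route is likely to use the characterization that a torsion class $\T$ is functorially finite if and only if it has a finite cover by the torsion classes of the indecomposable $\tau$-rigid modules it contains, equivalently $\T = \Fac M$ for some $\tau$-rigid $M \in \T$ with $M$ containing "enough" summands; combined with Bongartz completion this forces $M$ to be support $\tau$-tilting. So the crux is: \emph{if $A$ is $\tau$-tilting infinite, exhibit a torsion class $\T$ such that no finite-rank $\tau$-rigid module $M \in \T$ satisfies $\Fac M = \T$.} I would build $\T$ as the torsion class generated by infinitely many indecomposable $\tau$-rigid modules $\{X_i\}$ chosen so that no finite subfamily generates a torsion class containing all the others — for instance, so that $X_{i+1} \notin \Fac(X_1 \oplus \cdots \oplus X_i)$ for all $i$, which a $\tau$-tilting infinite algebra must admit by an inductive/compactness argument on the poset of functorially finite torsion classes. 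Establishing the existence of such an antichain-like generating family, and confirming it defeats functorial finiteness, is the heart of the matter; everything else reduces to bookkeeping with \th\ref{tau-tilting-theory} and \cite[Thm.~2.7]{adachi_tau-tilting_2014}.
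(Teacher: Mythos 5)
Your converse direction is correct and actually takes a cleaner route than the paper. If $A$ is not $\tau$-tilting finite, then (this is \th\ref{tau-rigid finite}, via $n$-regularity of $Q(\sttilt A)$ and a K\"onig-type argument) there is a strictly increasing chain $\Fac L_0\subsetneq\Fac L_1\subsetneq\cdots$ in $\ftors A$; the directed union $\T:=\bigcup_i\Fac L_i$ is a torsion class, and if it were functorially finite, say $\T=\Fac M$ with $M$ support $\tau$-tilting, then $M$ has finitely many indecomposable summands, so $M\in\Fac L_i$ for some $i$, giving $\T=\Fac M\subseteq\Fac L_i\subsetneq\Fac L_{i+1}\subseteq\T$, a contradiction. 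This ``compactness of $M$ against a directed union'' is all you need; you do not need $g$-vectors, and you do not need the mutation theorem — unlike the paper, which works with a \emph{decreasing} chain starting at $A$, forms the intersection $\bigcap_i\Fac M_i$, and must invoke \th\ref{main-torsion-classes} a second time to produce a contradiction (one cannot detect $\Fac N=\bigcap_i\Fac M_i$ being too small by compactness alone). Your ``antichain'' variant is the same idea in essence, though the chain formulation is tidier.

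Your forward direction, however, has a genuine gap precisely where you locate the heart of the matter. The step ``if some $N\in\T$ were not in $\Fac M$, one produces (via minimal extending modules / left approximations) a $\tau$-rigid module in $\T$ not in $\add M$'' is not covered by the mutation theory of Adachi--Iyama--Reiten: their Theorem~2.35 constructs such a mutation only when the ambient torsion class $\T$ is already known to be functorially finite, which is exactly what you are trying to prove. Making this step work for an \emph{arbitrary} torsion class $\T$ is the paper's \th\ref{main-torsion-classes}, whose proof is the substantive part of Section~3: passing to $B=\End_A(M)$ via $F=\Hom_A(M,-)$, showing $F|_{\Fac M}\colon\Fac M\to\Sub DM$ is an exact equivalence (\th\ref{preparation1}), proving that torsion classes contained in $\Fac M$ are faithfully encoded after applying $F$ and closing up under $\Tors{-}$ (\th\ref{recover}), and then a case analysis on whether the relevant simple $B$-modules lie in $F\T$ (\th\ref{bigger or smaller}). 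Given that theorem, both your ``grow a maximal $\Fac M\subseteq\T$ from inside'' strategy and the paper's ``descend through functorially finite torsion classes from $\mod A$ toward $\T$'' strategy work and are essentially dual; without it, asserting the existence of a $\tau$-rigid module in $\T$ outside $\add M$ is circular. Likewise, the ``Bongartz-type completion relative to $\T$'' you invoke is not the usual Bongartz completion $P(\lperp(\tau U))$ (which need not lie in $\T$) but the co-Bongartz completion $P(\Fac U)$, a point you should make explicit so that $M$ indeed stays inside $\T$.
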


This result is a consequence of the following generalization of
\cite[Theorem. 2.35]{adachi_tau-tilting_2014} to torsion classes which
are not necessarily functorially finite.

\begin{theorem}[see
  \th\ref{main-torsion-classes}]
  Let $A$ be a finite dimensional algebra and $\T$ a functorially
  finite torsion class. Then, if $\S$ is a torsion class such that
  $\T\supsetneq\S$ (resp. such that $\T\subsetneq\S$) in $\T$, then
  there exists a maximal (resp. minimal) functorially finite torsion
  class $\T'$ with the property $\T\supsetneq\T'\supseteq\S$
  (resp. $\T\subsetneq\T'\subseteq\S$).
\end{theorem}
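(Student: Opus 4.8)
The plan is to prove the assertion for the inclusion $\S\subsetneq\T$ and to deduce the one for $\T\subsetneq\S$ by duality. Since $\U\mapsto\U^{\perp}$ is an inclusion-reversing bijection between the torsion classes of $\mod A$ and the torsion classes of $\mod A^{\op}$, and it carries functorially finite torsion classes to functorially finite ones (all being of the form $\Fac$ of a support $\tau$-tilting module, see \th\ref{tau-tilting-theory}), the case $\T\subsetneq\S$ over $A$ follows by applying the case $\T^{\perp}\subsetneq\S^{\perp}$ over $A^{\op}$ and translating back. So from now on assume $\S\subsetneq\T$, and write $\T=\Fac M$ with $M=M_{1}\oplus\dots\oplus M_{r}$ basic support $\tau$-tilting, so that the $M_{i}$ are the indecomposable $\Ext$-projective objects of $\T$.

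First I would reduce the problem to a statement about the finitely many functorially finite torsion classes covered by $\T$. Whenever the left mutation $\mu_{i}^{-}M$ is defined, $\T_{i}:=\Fac\mu_{i}^{-}M$ is a functorially finite torsion class with $\T_{i}\subsetneq\T$, and by \cite[Thm.~2.35]{adachi_tau-tilting_2014} no functorially finite torsion class lies strictly between $\T_{i}$ and $\T$; thus the $\T_{i}$ are precisely the maximal functorially finite torsion classes properly contained in $\T$, of which there are only finitely many. The theorem then reduces to the assertion that \emph{every torsion class $\S$ with $\S\subsetneq\T$ is contained in some $\T_{i}$}: given this, the corresponding $\T_{i}$ is functorially finite, satisfies $\S\subseteq\T_{i}\subsetneq\T$, and is not properly contained in any functorially finite torsion class properly contained in $\T$, so it is a maximal functorially finite torsion class with $\T\supsetneq\T_{i}\supseteq\S$.

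To prove the reduced assertion I would bring in the bricks $B_{i}$ labelling the Hasse arrows $\T\to\T_{i}$. Recall that $B_{i}$ is a brick quotient of $M_{i}$ lying in $\T\cap\T_{i}^{\perp}$, that $\T\cap\T_{i}^{\perp}=\Filt(\add B_{i})$, and that $\T_{i}=\{N\in\T\mid\Hom_{A}(N,B_{i})=0\}$. If $\S$ contained a nonzero submodule $C$ of some $B_{i}$, then $C\in\T\cap\T_{i}^{\perp}=\Filt(\add B_{i})$, so $C$ surjects onto $B_{i}$, whence $B_{i}\in\Fac C\subseteq\S$; thus $B_{i}\notin\S$ already forces $\Hom_{A}(\S,B_{i})=0$, and with $\S\subseteq\T$ this gives $\S\subseteq\T_{i}$. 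Therefore it suffices to find an index $i$ with $B_{i}\notin\S$. This follows once one knows the structural fact that the family $\{B_{1},\dots,B_{k}\}$ generates $\T$ as a torsion class, i.e.\ $\T$ is the smallest torsion class containing all the $B_{i}$: if every $B_{i}$ lay in $\S$ we would get $\T\subseteq\S$, contradicting $\S\subsetneq\T$.

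The crux is thus the statement that the mutation labels generate $\T$, which -- by the reduction above -- is in fact equivalent to the theorem and is the genuinely new content beyond \cite[Thm.~2.35]{adachi_tau-tilting_2014}. It is here that functorial finiteness of $\T$ (rather than of $\S$) is used: it is what produces the finite $\Ext$-projective generator $M=P(\T)$ and the approximation theory needed to describe the $B_{i}$. I would establish it along the lines of the proof of \cite[Thm.~2.35]{adachi_tau-tilting_2014}, by induction on the number of simple modules: the base case $\T=\mod A$ is immediate, since there the labels $B_{i}$ are the simple $A$-modules $S_{1},\dots,S_{n}$ and every module is filtered by simples; for the inductive step one passes, via $\tau$-tilting reduction along a maximal proper direct summand of $M$, to a smaller algebra in which the mutation structure of $\T$ is visible. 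I expect the main obstacle to be the bookkeeping in this reduction -- accounting for all the brick labels of $\T$ in the smaller algebra -- so that the inductive hypothesis applies. Once that is in place, the reduced assertion, and hence the theorem for $\S\subsetneq\T$, follows, and the case $\T\subsetneq\S$ follows by the duality discussed at the start.
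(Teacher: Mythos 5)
Your reduction is sound as far as it goes: the duality argument for the $\T\subsetneq\S$ half, the identification of the candidates $\T_i=\Fac\mu_i^-(M)$ via \cite[Thm.~2.35]{adachi_tau-tilting_2014}, and the reformulation ``it suffices that every torsion class $\S\subsetneq\T$ lie inside some $\T_i$'' are all correct. But the reformulation you then make -- that this follows from the brick labels $B_1,\dots,B_k$ generating $\T$ as a torsion class -- is precisely where you stop proving things. You explicitly acknowledge that this generation statement is equivalent to the theorem, that it is the ``genuinely new content,'' and that you would establish it by $\tau$-tilting reduction and induction on simples, with the ``bookkeeping'' as the expected obstacle. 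That is a genuine gap, not a technicality: you have relocated the theorem, not demonstrated it. Moreover, the package of facts you quote about brick labels ($B_i$ is a brick quotient of $M_i$, $\T\cap\T_i^\perp=\Filt(\add B_i)$, $\T_i=\setP{N\in\T}{\Hom_A(N,B_i)=0}$) is substantial machinery that is not available in \cite{adachi_tau-tilting_2014} or in this paper; invoking it without proof conceals most of the real difficulty.

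The paper's proof is self-contained and never needs brick labels or the generation statement. It studies $F=\Hom_A(M,-)\colon\Fac M\to\Sub DM$ with $B=\End_A(M)$, shows this is an equivalence of exact categories for which $\T\mapsto\Tors{F\T}$ is injective on torsion classes contained in $\Fac M$ with $F\T=\Sub DM\cap\Tors{F\T}$, and then distinguishes two cases according to the simple $B$-modules. If some simple $S_k\in\Sub DM$ lies outside $F\S$, one gets $\Fac\mu_k(M)\supseteq\S$ directly. If every such $S_k$ lies in $F\S$, the cokernels $C_k$ of the minimal $\add(M/M_k)$-approximations of the $M_k$ map under $F$ to modules filtered by $S_k$, hence land in $F\S$, hence $C_k\in\S$; a short argument on the radical filtration of $\End_A(M)$ then forces $M\in\Tors{C_1\oplus\cdots\oplus C_n}\subseteq\S$, a contradiction. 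The objects $C_k$ play the role you want the $B_i$ to play, but the paper actually supplies the inductive step showing they ``generate enough of $\T$,'' whereas your proposal defers it. To make your route work you would in effect have to reconstruct this step -- or import the lattice-theoretic brick machinery from later literature, which is not fair game here.
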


Recall that $X\in\mod A$ is called a \emph{brick} if $\End_A(X)$ is a
division algebra.  In Section \ref{sec:brick-tau-rigid_correspondence}
we investigate the relationship between $\tau$-rigid $A$-modules and
bricks in $\mod A$. Our main result is the following alternative
characterizations of $\tau$-tilting finite algebras, which complements
\th\ref{simple-main}.

\begin{theorem}[see \th\ref{brick-tau-rigid-wide-finite}]
  \th\label{simple-bricks} Let $A$ be a finite dimensional
  algebra. Then, $A$ is $\tau$-tilting finite if and only if there are
  only finitely many isomorphism classes of bricks in $\mod A$.
\end{theorem}

This follows from the brick--$\tau$-rigid correspondence given below
(note that here $A$ is an arbitrary finite dimensional algebra), which
is of independent interest.

\begin{theorem}[see \th\ref{brick-rigid}]
  Let $A$ be a finite dimensional algebra. Then, there is a bijection
  between isomorphism classes of indecomposable $\tau$-rigid modules
  and isomorphism classes of bricks $X$ in $\mod A$ such that the
  smallest torsion class in $\mod A$ containing $X$ is functorially
  finite.
\end{theorem}

The above theorem is known already in the case when $A$ is the
preprojective algebra of Dynkin type, see \cite{iyama_lattice_2016}.

The rest of this article is devoted to study the following natural
question.

\begin{question}
  \th\label{main_question} Which quivers can be realized as the Hasse
  quiver of support $\tau$-tilting modules over a $\tau$-tilting
  finite algebra?
\end{question}

To study \th\ref{main_question}, we introduce the simplicial complex
$\Delta(A)$ and $g$-vectors as basic tools.  It is shown in
\cite{adachi_tau-tilting_2014} that there is a natural bijection
between $\sttilt A$ and the set $\ttwosilt A$ of two-term silting
complexes in $\KKK{A}$.  In Section \ref{sec:simplicial-complex}, we
associate to $A$ an abstract simplicial complex $\Delta(A)$ whose
maximal faces are in bijection with $\ttwosilt A$. After interpreting
the results from \cite{adachi_tau-tilting_2014} in terms of the
simplicial complex $\Delta(A)$, we prove the following result which is
analogous to the main result of \cite{riedtmann_simplicial_1991}. Our
method relies on the notion of shellability, whose usefulness in
tilting theory was already exploited in
\cite{unger_shellability_1999}.

\begin{theorem}[see \th\ref{shellability-sphere}]
  Let $A$ be a $\tau$-tilting finite algebra with $n+1$ simple
  modules. The following statements hold:
  \begin{enumerate}
  \item If $n\geq1$, then $\Delta(A)$ is shellable.
  \item The geometric realization of $\Delta(A)$ is homeomorphic to an
    $n$-dimensional sphere.
  \item If $n\geq 2$, then $\Delta(A)$ is simply connected.
  \end{enumerate}
\end{theorem}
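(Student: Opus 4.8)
The plan is to establish the three statements in order, deducing (b) and (c) as consequences of the shellability proved in (a). First I would recall the combinatorial setup: by the bijection between $\sttilt A$ and $\ttwosilt A$ and the construction of $\Delta(A)$ in Section~\ref{sec:simplicial-complex}, the maximal faces of $\Delta(A)$ are the $n+1$ element sets corresponding to basic support $\tau$-tilting modules (equivalently, two-term silting complexes), and mutation of support $\tau$-tilting modules corresponds to the operation of exchanging one vertex of a maximal face for another. The key structural input is that $\Delta(A)$ is a \emph{pseudomanifold}: since $A$ is $\tau$-tilting finite, every almost-complete support $\tau$-tilting module (a codimension-one face) has exactly two completions, so every $(n-1)$-dimensional face of $\Delta(A)$ is contained in exactly two maximal faces; moreover the mutation graph, which is the Hasse quiver of $\sttilt A$ (a connected poset with unique maximal element $A$ and unique minimal element $0$), shows that $\Delta(A)$ is strongly connected. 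Thus $\Delta(A)$ is a connected, $n$-dimensional pseudomanifold with $2^{n+1}$ or fewer maximal faces, finitely many in any case by $\tau$-tilting finiteness.

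For part (a), the strategy is to exhibit an explicit shelling order on the maximal faces using the partial order on $\ttwosilt A \cong \sttilt A$. Following the approach of \cite{unger_shellability_1999}, I would order the support $\tau$-tilting modules (equivalently, the maximal faces) by any linear extension of the poset $\sttilt A$, refined appropriately — concretely, one can use the $g$-vector data and order maximal faces by a generic linear functional on the Grothendieck group, exploiting the geometric realization via $g$-vectors established earlier in Section~\ref{sec:simplicial-complex}. The shelling condition to verify is that when a maximal face $\sigma_k$ is added, the intersection $\sigma_k \cap (\sigma_1 \cup \cdots \cup \sigma_{k-1})$ is a nonempty union of facets of $\partial\sigma_k$ of the correct (pure, codimension-one) form. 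Here the pseudomanifold property is doing the heavy lifting: each facet of $\sigma_k$ lies in exactly one other maximal face, and the linear-functional order ensures that facet is ``earlier'' precisely when the adjacent maximal face comes before $\sigma_k$; one checks that the set of such earlier-adjacent facets is never all of $\partial\sigma_k$ (because the first face in the order has no earlier neighbours, and inductively the ``last'' facet direction is always fresh) and forms a shellable subcomplex of the boundary sphere $\partial\sigma_k \cong S^{n-1}$ — in fact an initial segment of a shelling of that boundary. \textbf{This verification is the main obstacle}: one must argue carefully that the facets of $\sigma_k$ already covered form a \emph{beginning segment} of a line shelling of $\partial\sigma_k$, not an arbitrary union of facets, and this requires matching the global linear order on maximal faces with the local geometry of the $g$-vector fan around $\sigma_k$. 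The convexity/genericity of the chosen linear functional on $g$-vectors is what makes this work, analogously to line shellings of polytope boundaries.

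For part (b), once (a) is established the conclusion is essentially formal: a shellable pure $n$-dimensional simplicial complex in which every $(n-1)$-face lies in exactly two maximal faces (the pseudomanifold condition, already verified) has geometric realization homeomorphic to $S^n$. This is a standard consequence of the fact that a shellable $n$-pseudomanifold is a combinatorial $n$-sphere — each step of the shelling either attaches an $n$-ball along a disk in its boundary (keeping a ball) or, at the final step, caps off a sphere; since no boundary is left over (pseudomanifold), the result is a sphere. I would cite the relevant statement from the combinatorics literature (e.g. Björner's handbook article on shellability) rather than reprove it. For part (c), simple connectivity for $n \geq 2$ is immediate from (b), since $S^n$ is simply connected for $n \geq 2$; alternatively, and without invoking (b), one gets it directly from shellability plus the pseudomanifold property by a van Kampen argument, noting that the $1$-skeleton and $2$-skeleton of a shellable complex already determine $\pi_1$, and each maximal face beyond the first is glued along a simply connected (indeed contractible, being a beginning segment of a shelling of a sphere of dimension $\geq 1$) subcomplex, so no new relations or generators of $\pi_1$ are introduced after the first face.
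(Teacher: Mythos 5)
Your framing of the setup is correct: $\Delta(A)$ is a finite, pure, non-branching complex with empty boundary (\th\ref{properties-of-Delta}), and parts (b) and (c) follow formally once shellability is established, exactly as in the paper, which appeals to Danaraj--Klee for the sphere statement. You should also treat the base case $n=0$ separately, where $\Delta(A)$ is a pair of points.

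The genuine gap is in part (a). You begin with the right ordering --- a linear extension of the partial order on $\sttilt A$ --- but then pivot to a generic linear functional on $g$-vectors and invoke a line-shelling argument. These are not the same order in general, and the line-shelling argument does not apply here. Bruggesser--Mani line shellings certify shellability of polytope boundaries, hence of \emph{polytopal} fans; Section~\ref{sec:g-vectors} realizes $\Delta(A)$ by a simplicial fan, but the paper does not prove (and it is not a formal consequence of anything in it) that this fan is polytopal. Not every complete simplicial fan is shellable, so genericity of the functional alone cannot close the step you yourself flag as ``the main obstacle.'' Your further requirement, that the already-covered facets of $\sigma_k$ form a beginning segment of a line shelling of $\partial\sigma_k$, is also stronger than what is needed: the definition used in the paper only requires $\alpha_j \wedge \bigl(\bigvee_{\beta\prec\alpha_j}\beta\bigr)$ to be pure of codimension one in $\Delta(A)$.

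The paper's verification of this pureness is purely combinatorial and uses two inputs absent from your sketch. First, the \emph{minimal} (co-Bongartz) completion: any two-term presilting object $M(\beta)$ is a direct summand of a unique smallest two-term silting object $M(\alpha_\ell)$. Second, \th\ref{main-torsion-classes}: if $\T$ is a torsion class with $\Fac M \supsetneq \T$, then there is a mutation $N$ of $M$ with $\Fac M \supsetneq \Fac N \supseteq \T$. Given $\beta\in\Delta(\alpha_j)$, one takes the minimal completion $\alpha_\ell\prec\alpha_j$, applies \th\ref{main-torsion-classes} (transported through the silting/$\tau$-tilting bijection) to produce a mutation $\alpha_m$ of $\alpha_j$ with $\alpha_\ell\leqslant\alpha_m <\alpha_j$, and the codimension-one face $\alpha_m\wedge\alpha_j$ then contains $\beta$ and lies in $\Delta(\alpha_j)$. \th\ref{main-torsion-classes} plays precisely the role that convexity plays in your sketch; without it (or without the unavailable polytopality) the shelling claim is unsupported.
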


It is worth mentioning that a finite simplicial complex which is
homeomorphic to a sphere is not necessarily shellable
\cite{rudin_unshellable_1958}.

We consider the groupoid $\Delta^{\max}(A)$ whose objects are the
maximal faces of $\Delta(A)$. It is freely generated by isomorphisms
$\alpha\to\beta$ for each $\alpha$ and $\beta$ sharing a simplex of
codimension 1 in $\Delta(A)$. Thus, morphisms in $\Delta^{\max}(A)$
correspond to finite sequences of mutations of support $\tau$-tilting
$A$-modules. A \emph{cycle} in $\Delta^{\max}(A)$ is a morphism with
the same source and target. The \emph{rank} of a cycle $\mu$ is the
codimension in $\Delta(A)$ of the intersection of all the maximal
faces involved in $\mu$.  The following result is analogous to
\cite[Thm. 9.17]{fomin_cluster_2008}.

\begin{theorem}[see \th\ref{property-R2}]
  Let $A$ be a finite dimensional algebra. If $\Delta(A)$ is
  shellable, then every cycle in $\Delta^{\max}(A)$ is generated by
  cycles of rank 2.
\end{theorem}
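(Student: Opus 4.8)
The plan is to reduce the statement to a purely combinatorial fact about shellable pseudomanifolds and their dual graphs. Recall that when $\Delta(A)$ is shellable of dimension $n$ (with $n+1$ simples), its maximal faces, ordered by a shelling, have the property that the intersection of the $k$-th facet with the union of the earlier ones is a nonempty union of codimension-$1$ faces; this is exactly the data encoded by the groupoid $\Delta^{\max}(A)$, whose generating isomorphisms correspond to codimension-$1$ "walls". A cycle $\mu$ of rank $r$ is a closed walk in the dual graph all of whose facets contain a fixed face $\sigma$ of codimension $r$; equivalently, $\mu$ lives in the dual graph of the link $\mathrm{lk}_{\Delta(A)}(\sigma)$, which is itself a shellable complex of dimension $r-1$ by a standard result on links in shellable complexes. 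So it suffices to show: in a shellable (pseudo)manifold complex, the fundamental group of the dual graph is generated by the $2$-cycles coming from codimension-$2$ faces.

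First I would make precise the "fundamental group of the dual graph modulo rank-$2$ cycles" as the fundamental group of the $2$-complex $X$ obtained from the dual graph of $\Delta(A)$ by gluing a $2$-cell along each rank-$2$ cycle (these are the boundaries of the polygons formed by the facets around a codimension-$2$ face). The claim becomes: $\pi_1(X)$ is trivial, i.e. every cycle in $\Delta^{\max}(A)$ is a product of conjugates of rank-$2$ cycles. Second, I would invoke the previous theorem \th\ref{shellability-sphere}: since $\Delta(A)$ is shellable, its geometric realization $|\Delta(A)|$ is an $n$-sphere, hence simply connected for $n\geq 2$; and $X$ is precisely the $2$-skeleton (up to homotopy) of the CW structure on $|\Delta(A)|$ dual to the simplicial structure — that is, of the dual cell decomposition of the sphere. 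Since $\pi_1$ of a CW complex depends only on its $2$-skeleton, $\pi_1(X)\cong\pi_1(|\Delta(A)|)=1$ in the case $n\geq 2$. The small cases $n=0,1$ are handled directly: for $n=1$, $|\Delta(A)|$ is a circle, its dual graph is a cycle, and there are no codimension-$2$ faces, but also any cycle in $\Delta^{\max}(A)$ that returns to its source corresponds to winding around, which—because the Hasse quiver of $\sttilt A$ has no oriented cycles—forces the trivial cycle; for $n=0$ the complex is two points and the statement is vacuous.

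The main obstacle is making rigorous the identification of $\Delta^{\max}(A)$ with the dual cell structure and verifying that the relations imposed by rank-$2$ cycles are exactly the $2$-cells of the dual decomposition — one must check that around a codimension-$2$ face $\sigma$ the facets containing $\sigma$ form a single cyclically ordered polygon (this uses that $\mathrm{lk}(\sigma)$ is a shellable $1$-sphere, hence a single cycle, not a disjoint union) and that traversing this polygon is precisely the rank-$2$ cycle at $\sigma$. Once this local picture is in hand, the global statement follows from the standard fact that for any regular CW complex the fundamental group is the fundamental group of the presentation complex with one generator per edge (of the dual graph) and one relation per $2$-cell. I would phrase the argument so that it only uses shellability through \th\ref{shellability-sphere} and through the fact that links of faces in shellable complexes are shellable, keeping the combinatorial bookkeeping about mutation sequences parallel to the treatment in \cite{fomin_cluster_2008}.
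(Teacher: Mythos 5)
Your approach is genuinely different from the paper's. The paper proves \th\ref{property-R2} by a direct combinatorial induction on the shelling order: fixing a shelling $\preceq$, taking a cycle $\mu$ through a $\preceq$-maximal facet $\alpha_i$, observing that $\delta := \alpha_{i-1}\wedge\alpha_i\wedge\alpha_{i+1}$ has codimension $2$, and then using the purity of each $\Delta(\beta')$ to push $\delta$-containing facets down the shelling order until one finds a common facet $\beta$ below both $\alpha_{i-1}$ and $\alpha_{i+1}$, which allows rewriting $\mu$ as a product of a rank-$2$ cycle and a cycle whose $\preceq$-maximum is strictly smaller (or appears fewer times). That argument works for \emph{any} shellable pure complex with connected facet groupoid, and uses no topology beyond the definition of a shelling. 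Your route instead passes through \th\ref{shellability-sphere}: realize the dual $2$-complex $X$ as the $2$-skeleton of the dual CW structure on an $n$-sphere and conclude $\pi_1(X)=1$. This recovers the $\Delta(A)$ specialization (the only one asked for here), but not the more general form proved in the body, and it requires genuinely more machinery: one must know that a shellable sphere is PL (Danaraj--Klee) so that the dual cell decomposition is a regular CW complex, and one must verify that the attaching map of the $2$-cell dual to a codimension-$2$ face traces exactly a rank-$2$ cycle. You flag this as ``the main obstacle'' but do not close it, so at present it is a gap, albeit a fixable one.

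There is one concrete error you should repair: the case $n=1$. You assert that a $1$-dimensional $\Delta(A)$ has ``no codimension-$2$ faces'' and that acyclicity of the Hasse quiver forces any cycle to be trivial. Both claims fail. The empty simplex has dimension $-1$, hence codimension $2$ when $n=1$, and any cycle that traverses the whole circle has $\bigwedge_i\alpha_i=\emptyset$, so it \emph{is} a rank-$2$ cycle and the statement holds for that trivial reason. Moreover, the Hasse quiver $Q(\sttilt A)$ has no oriented cycles, but $\Delta^{\max}(A)$ is a \emph{groupoid}: both directions of each mutation are available, so it certainly has nontrivial cycles when $n=1$. Once the $n=1$ base case is corrected and the dual-CW regularity is grounded in the PL-ness supplied by shellability, your proof of the introductory statement would go through; but it is notably less self-contained and less general than the paper's inductive argument.
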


In Section \ref{sec:g-vectors} we study the $g$-vectors associated to
two-term presilting objects in a $K$-linear $\Hom$-finite Krull-Schmidt
triangulated category $\T$. 
We observe some basic properties of $g$-vectors including sign-coherence
(\th\ref{sign-coherent}), transitivity of $G$-matrices
(\th\ref{G-identities}), and the fact that $g$-vectors determine
two-term presilting objects
(\th\ref{order-implies-epi}\eqref{it:g-vectors_determine_presilting}). 
Our results do not require any restriction on the base field $K$ thanks
to general observations on field extensions (\th\ref{field-extension}).
Since the $g$-vectors of indecomposable direct summands of a fixed basic
two-term silting object $M$ gives a basis of the
Grothendieck group $K_0(\T)$ of $\T$ (see
\th\ref{silting-object-gives-basis-of-K0}\eqref{it:S-K0-basis}), we
can naturally associate to $M$ a cone $C(M)$ in $K_0(\T)$. Then we
prove the following result.

\begin{theorem}[see \th\ref{cones}]
  Let $M$ and $N$ be two-term silting objects in $\T$. Then, the
  following statements hold:
  \begin{enumerate}
  \item Let $X\in\T$ be an object such that
    $\add X = \add M\cap\add N$. Then, we have $C(M)\cap C(N)=C(X)$.
  \item If $M\not\cong N$, then $C(M)$ and $C(N)$ intersect only at
    their boundaries.
  \end{enumerate}
\end{theorem}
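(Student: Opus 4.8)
The plan is first to deduce (b) from (a) and to isolate the content of (a). For (b): if $M\not\cong N$ then $\add X=\add M\cap\add N$ contains at most $n-1$ indecomposables, where $n$ is the common number of indecomposable summands of $M$ and $N$, that is, the rank of $K_0(\T)$; since the $g$-vectors of the indecomposable summands of a two-term silting object form a basis of $K_0(\T)$ (\th\ref{silting-object-gives-basis-of-K0}\eqref{it:S-K0-basis}), the cone $C(X)$ is then a proper face of both $C(M)$ and $C(N)$, hence lies in both their boundaries; combined with (a) this gives (b). In (a), the inclusion $C(X)\subseteq C(M)\cap C(N)$ is immediate, since the extremal rays of $C(X)$ occur among those of $C(M)$ and of $C(N)$. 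So the substance of (a) is the reverse inclusion $C(M)\cap C(N)\subseteq C(X)$.

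For this, decompose $M=\bigoplus_{i=1}^n M_i$ and $N=\bigoplus_{j=1}^n N_j$ into indecomposables and let $G=(g_{ij})$ be the matrix defined by $g(N_j)=\sum_i g_{ij}\,g(M_i)$. Given $v\in C(M)\cap C(N)$, write $v=\sum_j b_j\,g(N_j)$ with all $b_j\geq0$. Since $N_j\in\add X$ precisely when $N_j\in\add M$, which by \th\ref{order-implies-epi}\eqref{it:g-vectors_determine_presilting} happens exactly when $g(N_j)$ equals one of the basis vectors $g(M_i)$, it suffices to show that $b_j=0$ whenever $g(N_j)$ is not such a basis vector. Here I would invoke the sign-coherence of $g$-vectors, in the form that the $g$-vectors of the indecomposable summands of $N$ are sign-coherent with respect to the basis $\set{g(M_i)}$: for each $i$, the entries $g_{i1},\dots,g_{in}$ are either all $\geq0$ or all $\leq0$. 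The $i$-th coordinate of $v$ in this basis is $\sum_j g_{ij}\,b_j$, which is $\geq0$ because $v\in C(M)$; hence, for every $i$ for which $g_{i1},\dots,g_{in}$ are all $\leq0$, we get $g_{ij}b_j=0$ for all $j$. Consequently $b_j=0$ as soon as some coordinate $g_{ij}$ is strictly negative, that is, as soon as $g(N_j)\notin C(M)$. This reduces (a) to the statement that every indecomposable two-term presilting object $N'$ with $g(N')\in C(M)$ lies in $\add M$: granting it, each $N_j$ with $b_j>0$ lies in $\add M\cap\add N=\add X$, so $v\in C(X)$.

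The statement just isolated amounts to the assertion that the cones $C(L)$, as $L$ ranges over the two-term silting objects of $\T$, together with all their faces, form a simplicial fan in $K_0(\T)$. To prove it, I would build up from the local picture provided by mutation: an almost complete two-term silting object $\bigoplus_{i\neq k}M_i$ has exactly two completions $M$ and $M'=\bigl(\bigoplus_{i\neq k}M_i\bigr)\oplus M_k'$ \cite{adachi_tau-tilting_2014}, and the exchange triangle $M_k\to Y_0\to M_k'\to M_k[1]$ with $Y_0\in\add\bigl(\bigoplus_{i\neq k}M_i\bigr)$ yields $g(M_k)+g(M_k')\in\sum_{i\neq k}\RR_{\geq0}\,g(M_i)$, so that $C(M)$ and $C(M')$ share the facet $\sum_{i\neq k}\RR_{\geq0}\,g(M_i)$ and lie in opposite closed half-spaces of the hyperplane it spans. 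The base case $M=A$ is elementary: $C(A)=\sum_i\RR_{\geq0}\,[P_i]$, and a normalized presentation $N'=(P^{-1}\to P^0)$ with $\add P^{-1}\cap\add P^0=0$ has $g(N')=[P^0]-[P^{-1}]\in C(A)$ if and only if $P^{-1}=0$, i.e.\ $N'\in\add A$.

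From these ingredients one wants to propagate the conclusion along walls, controlling the bookkeeping by the transitivity of $G$-matrices (\th\ref{G-identities}) and sign-coherence, and using reduction with respect to $M_0:=\bigoplus\setP{M_i}{g(N')\text{ has positive coordinate at }i}$ to lower the rank. The main obstacle I anticipate is the global consistency of this propagation — that the half-space data attached to $g(N')$ by all walls are mutually compatible — and, in particular, making the argument work when $A$ is not $\tau$-tilting finite, so that the mutation graph of two-term silting objects can be infinite and no naive finite induction applies.
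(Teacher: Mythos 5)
Your reduction of (b) to (a) and the easy inclusion $C(X)\subseteq C(M)\cap C(N)$ are fine, but the argument breaks at the sign-coherence step, earlier than the gap you acknowledge at the end. The paper's sign-coherence theorem applies to the $g$-vectors, with respect to $S$, of the summands of an object lying in $(\add S)*(\add\Sigma S)$; it does not say that the summands of $N$ are sign-coherent with respect to the basis $\{g_S^{M_i}\}$, because $N$ need not belong to $(\add M)*(\add\Sigma M)$ even though both $M$ and $N$ are two-term with respect to $S$. This is not a technicality: the claim is false. Take $A$ the path algebra of the quiver $1\to 2$, $S=A$, $M=P_1\oplus(P_2\to P_1)$ and $N=P_2\oplus P_1[1]$, so that
\[
\GGG(S,M)=\begin{pmatrix}1&1\\0&-1\end{pmatrix},\qquad
\GGG(S,N)=\begin{pmatrix}0&-1\\1&0\end{pmatrix},\qquad
\GGG(M,N)=\GGG(S,M)^{-1}\GGG(S,N)=\begin{pmatrix}1&-1\\-1&0\end{pmatrix}.
\]
The first row $(1,-1)$ has mixed signs. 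And even in this example your inference ``$b_j=0$ as soon as $g_{ij}<0$ for some $i$'' does not follow from the row argument: $g_M^{N_2}=(-1,0)^{\top}\notin C(M)$, yet the only all-nonpositive row is the second, where $g_{22}=0$, so no row forces $b_2=0$. On top of this, the wall-crossing/fan step you isolate is left unproven, and, as you note yourself, it is unclear how to make it terminate when $A$ is not $\tau$-tilting finite.

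The paper's proof avoids all of this and is essentially one line from \th\ref{order-implies-epi}\eqref{it:g-vectors_determine_presilting}. Since $C(M)\cap C(N)$ is a rational polyhedral cone, its rational points are dense and it suffices to treat these. Given rational $v\in C(M)\cap C(N)$, clear denominators so that $v=\sum_i a_i\,g_S^{M_i}=\sum_j b_j\,g_S^{N_j}$ with $a_i,b_j\in\ZZ_{\geq0}$, and form $M':=\bigoplus_i M_i^{\,a_i}$ and $N':=\bigoplus_j N_j^{\,b_j}$. These are two-term presilting with the same class in $K_0(\T)$, hence isomorphic by \th\ref{order-implies-epi}\eqref{it:g-vectors_determine_presilting}. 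Therefore the indecomposables occurring with positive multiplicity on either side lie in $\add M\cap\add N=\add X$, and $v\in C(X)$. No sign-coherence, no fan structure, and no global induction over mutations are needed.
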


Consequently, the $g$-vectors of presilting objects give a natural
geometric realization of the simplicial complex $\Delta(A)$. This is
analogous to known results for tilting modules
\cite{hille_volume_2006} and cluster-tilting objects
\cite{plamondon_generic_2013}. We also show a strong connection
between the partial order on two-term silting objects in $\T$ and
their $g$-vectors by proving the following theorem.

\begin{theorem}[see \th\ref{cones_determine_partial_order}]
  Let $A$ be a $\tau$-tilting finite algebra. Then the partial order
  of support $\tau$-tilting $A$-modules is entirely determined by the
  cones of $g$-vectors.
\end{theorem}

The following general question is interesting from a combinatorial
viewpoint.

\begin{question}
  Which sets of integer vectors can be realized as the set of
  $g$-vectors of indecomposable $\tau$-rigid modules over a
  $\tau$-tilting finite algebra?
\end{question}

At the end of Section \ref{sec:g-vectors} we provide some examples
and we explain a connection to cluster-tilting theory.

\begin{conventions}
  Let $K$ be an arbitrary field.  Throughout this article $A$ always denotes a
  finite dimensional $K$-algebra.  We denote the category of finite
  dimensional right $A$-modules by $\mod A$, and the Auslander--Reiten
  translation by $\tau$.  For $M\in \mod A$, we denote the number of
  pairwise non-isomorphic indecomposable direct summands of $M$ by
  $|M|$. For example, $|A|$ equals the number of simple $A$-modules.
  We denote the full subcategory of $\mod A$ consisting of all modules
  which are direct sums of direct summands of $M$ by $\add M$.  We
  denote by $\Fac M$ the full subcategory of $\mod A$ consisting of
  all $A$-modules which are generated by a finite direct sum of copies
  of $M$.  Dually, we denote by $\Sub M$ the full subcategory of
  $\mod A$ consisting of all $A$-modules which are cogenerated by a
  finite direct sum of copies of $M$. If $(P,\leqslant)$ is a
  partially ordered set and $x,y\in P$, then the (closed) interval
  between $x$ and $y$ in $P$ is the subset of $P$ given by
  \[
    [x,y] :=\setP{z\in P}{x\leqslant z\leqslant y}.
  \]
  When we write an object $M$ in a Krull-Schmidt category as
  $M=M_1\oplus\cdots\oplus M_n$ we always mean that $M$ is basic and
  that each one of the $M_i$ is indecomposable.
\end{conventions}

\section{Preliminaries}
\label{sec:preliminaries}

Let $\C$ be a small additive category and $\X$ be a subcategory of
$\C$.  We say that $\X$ is \emph{contravariantly finite in $\C$} if
for every object $M$ of $\C$ there exist an object $X$ of $\X$ and a
morphism $f:X\to M$ such that the sequence of functors
\[
  \begin{tikzcd}
    \C(-,X)\rar{f\circ?}&\C(-,M)\rar&0
  \end{tikzcd}
\]
is exact. Dually, we say that $\X$ is \emph{covariantly finite in
  $\C$} if for every object $M$ of $\C$ there exist an object $X$ of
$\X$ and a morphism $g\colon M\to X$ such that the sequence of
functors
\[
  \begin{tikzcd}
    \C(X,-)\rar{?\circ g}&\C(M,-)\rar&0
  \end{tikzcd}
\]
is exact.  We say that $\X$ is \emph{functorially finite in $\C$} if
$\X$ is both contravariantly and covariantly finite in $\C$.

\subsection{Torsion pairs and $\tau$-tilting theory}

Let $\T,\F$ be full subcategories of $\mod A$.  We say that $\T$ is a
\emph{torsion class} if it is closed under factor modules and
extensions in $\mod A$.  We say that $\F$ is a \emph{torsion-free
  class} if it is closed under submodules and extensions in $\mod A$.
We say that $(\T,\F)$ is a \emph{torsion pair} if $\T$ is a torsion
class, $\F$ is a torsion-free class, and $\F=\T^\perp$ (or,
equivalently, $\T={}^\perp\F$). We denote the set of all torsion
(resp. torsion-free) classes in $\mod A$ by $\tors A$ (resp.
$\torsf A$).  Accordingly, we denote the set of functorially finite
torsion (resp. torsion-free) classes in $\mod A$ by $\ftors A$
(resp. $\ftorsf A$). We say that an $A$-module $M\in\T$ is
\emph{$\Ext$-projective in $\T$} if for all $N\in\T$ we have
$\Ext_A^1(M,N)=0$.

\begin{proposition}
  \th\label{characterization-of-ftors}
  \cite{auslander_almost_1981,hoshino_tilting_1982,smalo_torsion_1984}
  Let $A$ be a finite dimensional algebra and $(\T,\F)$ a torsion pair
  in $\mod A$. The following statements are equivalent:
  \begin{enumerate}
  \item\label{it:T-is-ff} The torsion class $\T$ is functorially
    finite.
  \item The torsion-free class $\F$ is functorially finite.
  \item\label{it:Ext-projective} There exist a basic $A$-module
    $P(\T)\in \T$ such that $\Fac P(\T) = \T$ and $\add P(\T)$
    coincides with the class of $\Ext$-projective $A$-modules in $\T$.
  \end{enumerate}
  If any of the above equivalent conditions hold, then the $A$-module
  $P(\T)$ is a tilting $(A/\ann \T)$-module.
\end{proposition}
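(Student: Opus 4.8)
\emph{Strategy.} The statement is classical, so I would mostly assemble known pieces; here is the route I would take.

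\emph{Easy implications and the reductions.} For $M\in\mod A$ with torsion submodule $tM$, the inclusion $tM\hookrightarrow M$ is a right $\T$-approximation, since the image of any morphism to $M$ from an object of $\T$ is a quotient of such an object and hence lies in $tM$; dually $M\twoheadrightarrow M/tM$ is a left $\F$-approximation. So $\T$ is always contravariantly finite and $\F$ always covariantly finite, whence (a) amounts to covariant finiteness of $\T$ and (b) to contravariant finiteness of $\F$. For $(c)\Rightarrow(a)$ I would observe that the trace $\sum\Im\bigl(P(\T)^{k}\to M\bigr)$ lies in $\Fac P(\T)=\T$ and includes into $M$ as a right $\T$-approximation, and that a torsion class of the form $\Fac N$ is automatically covariantly finite (produce a left $\T$-approximation of $M$ by pushing out a left $\T$-approximation of a projective presentation of $M$ along the presentation). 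For $(a)\Leftrightarrow(b)$ I would invoke the classical fact (Smal{\o}) that for a torsion pair covariant finiteness of $\T$ is equivalent to contravariant finiteness of $\F$; one implication is a pullback/pushout argument combining a left $\T$-approximation with the canonical torsion sequence, the other is dual, and alternatively one transports everything along the duality $D\colon\mod A\to\mod A^{\op}$, which sends $(\T,\F)$ to a torsion pair and interchanges the two finiteness conditions.

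\emph{The core: $(a)\Rightarrow(c)$ and the final assertion.} First I would reduce to $\ann\T=0$: over $B:=A/\ann\T$ the subcategory $\T$ is still a functorially finite torsion class with $\ann_{B}\T=0$, and for objects of $\T$ the group $\Ext^{1}$ and the operations $\Fac$, $\add$ do not depend on whether one works over $A$ or over $B$ (an $A$-extension of objects of $\T$ has its middle term in $\T\subseteq\mod B$). Assuming $\ann\T=0$, since $\bigcap_{T\in\T}\ann T=0$ and all modules are finite-dimensional, $A$ embeds into an object of $\T$; hence the minimal left $\T$-approximation $f\colon A\to T^{0}$ is a monomorphism, and $T^{1}:=\Coker f$ lies in $\T$ as a quotient of $T^{0}\in\T$. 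Then $T^{0}$ and $T^{1}$ are $\Ext$-projective in $\T$: given $N\in\T$ and a short exact sequence $0\to N\to E\to T^{0}\to0$, one has $E\in\T$, pulls it back along $f$, and uses projectivity of $A$ together with \emph{minimality} of $f$ to split it; and applying $\Hom(-,N)$ to $0\to A\to T^{0}\to T^{1}\to0$ gives $\Ext^{1}(T^{1},N)=0$ because every morphism $A\to N$ factors through $f$ as $N\in\T$.

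\emph{Assembling the tilting module.} Granting the key inequality $\operatorname{pd}_{A}(T^{0}\oplus T^{1})\le1$, the module $P(\T):=T^{0}\oplus T^{1}$ is a partial tilting module, and the exact sequence $0\to A\to T^{0}\to T^{1}\to0$ with the last two terms in $\add P(\T)$ upgrades it to a tilting $A$-module. Then $\Fac P(\T)=\T$: the inclusion $\subseteq$ holds because $P(\T)\in\T$ and $\T$ is closed under quotients, and $\supseteq$ because $\Ext^{1}(P(\T),M)=0$ for all $M\in\T$ by $\Ext$-projectivity while $\Fac P(\T)=\{M\mid\Ext^{1}(P(\T),M)=0\}$ for a tilting module; moreover classical tilting theory identifies $\add P(\T)$ with the $\Ext$-projective objects of the torsion class $\Fac P(\T)=\T$. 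Undoing the reduction, $P(\T)$ is a tilting $(A/\ann\T)$-module, which also yields the final clause.

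\emph{Main obstacle.} The delicate point is exactly the inequality $\operatorname{pd}_{A}(T^{0}\oplus T^{1})\le1$ in the case $\ann\T=0$ — equivalently, that the minimal left $\T$-approximation of the regular module is projective. The hypothesis $\ann\T=0$ is indispensable: in general an $\Ext$-projective object of a torsion class can have projective dimension $2$ (e.g.\ a simple module $S$ with $\add S$ a torsion class over a suitable algebra of global dimension $2$), and it is precisely the passage to $A/\ann\T$ that removes this pathology.
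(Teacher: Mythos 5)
The paper does not give a proof of this proposition; it is quoted from Auslander--Smal{\o}, Hoshino, and Smal{\o}, so there is no in-house argument to compare against. Read on its own, your attempt gets the routine parts right: the inclusion $tM\hookrightarrow M$ is indeed a right $\T$-approximation and $M\twoheadrightarrow M/tM$ a left $\F$-approximation, so (a) and (b) reduce to covariant finiteness of $\T$ and contravariant finiteness of $\F$; the reduction to $\ann\T=0$ is legitimate (for $X,Y\in\T$ one has $\Ext^1_A(X,Y)=\Ext^1_{A/\ann\T}(X,Y)$ because the middle term of any $A$-extension of objects of $\T$ lies in $\T$, hence is an $(A/\ann\T)$-module); and the pullback/long-exact-sequence verifications that the codomain $T^0$ of the minimal left $\T$-approximation of $A/\ann\T$ and its cokernel $T^1$ are $\Ext$-projective in $\T$ are correct.

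The genuine gap is exactly where you flag it, and you never close it: you do not prove $\operatorname{pd}_{A/\ann\T}(T^0\oplus T^1)\le1$, you only ``grant'' it. This is not a peripheral detail but the substance of (a)$\Rightarrow$(c) and of the final clause that $P(\T)$ is a tilting $(A/\ann\T)$-module; without it, the identifications of $\Fac P(\T)$ with $\T$ and of $\add P(\T)$ with the $\Ext$-projectives (which you import from classical tilting theory) have no foundation. Nothing you have established implies it: $\Ext$-projectivity controls $\Ext^1(T^i,-)$ only against objects of $\T$, not against arbitrary $(A/\ann\T)$-modules, and $\ann\T=0$ on its own does not bound projective dimension. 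In the cited sources this is precisely where most of the work goes. Your parenthetical reformulation, ``equivalently, that the minimal left $\T$-approximation of the regular module is projective,'' is also misleading: $T^0$ is generally not a projective $(A/\ann\T)$-module, only of projective dimension at most one, so this is not a faithful restatement and does not make the missing step more tractable. A smaller point: for (c)$\Rightarrow$(a) you assert covariant finiteness of a torsion class of the form $\Fac N$ with only a vague pushout recipe; that too is a classical lemma which here deserves either a precise argument or an explicit citation.
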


We remind the reader of the definition of support $\tau$-tilting pairs
and support $\tau$-tilting modules.

\begin{definition}
  \th\label{def-sttilt} \cite[Def. 0.1 and
  0.3]{adachi_tau-tilting_2014} Let $A$ be a finite dimensional
  algebra.
  \begin{enumerate}
  \item A pair $(M,P)\in(\mod A)\times(\proj A)$ is
    \emph{$\tau$-rigid} if $\Hom_A(M,\tau M)=0$ and
    $\Hom_A(P,M)=0$. In this case we say that $M$ is a
    \emph{$\tau$-rigid $A$-module}.
  \item A $\tau$-rigid pair $(M,P)$ is \emph{support $\tau$-tilting}
    if $|M|+|P|=|A|$.  In this case we say that $M$ is a \emph{support
      $\tau$-tilting $A$-module}. If $P=0$ we say that $M$ is a
    \emph{$\tau$-tilting $A$-module}.
  \end{enumerate}
  For convenience, we denote the set of isomorphism classes of basic
  $\tau$-tilting (resp. support $\tau$-tilting) $A$-modules by
  $\ttilt A$ (resp. $\sttilt A$).
\end{definition}

The following result collects the basic properties of support
$\tau$-tilting modules.

\begin{theorem}
  \th\label{tau-tilting-theory} \cite[Thms. 2.7, 2.10, 2.12 and
  2.18]{adachi_tau-tilting_2014} Let $A$ be a finite dimensional
  algebra. The following statements hold:
  \begin{enumerate}
  \item\label{it:sttiltA-torsA} The map $M\mapsto \Fac M$ induces a
    bijection
    \[
      \begin{tikzcd}
        \sttilt A \rar[leftrightarrow]&\ftors A
      \end{tikzcd}
    \]
    whose inverse is given by $\T\mapsto P(\T)$.
  \item\label{it:Bongartz-completion} Let $U$ be a $\tau$-rigid
    $A$-module.  Then, $\lperp(\tau U)$ is a functorially finite
    torsion class in $\mod A$ and $U\in\add P(\lperp{(\tau U)})$.
    Moreover, $P(\lperp{(\tau U)})$ is a $\tau$-tilting $A$-module
    which we call the \emph{Bongartz completion of $U$}.
  \item\label{it:maximal-tau-rigid} Let $M$ be a $\tau$-rigid
    $A$-module.  Then, $M$ is a $\tau$-tilting $A$-module if and only
    if for every $A$-module $N$ such that $M\oplus N$ is $\tau$-rigid
    we have $N \in\add M$.
  \item\label{it:mutation-in-sttiltA} Let $(M,P)$ be a basic
    $\tau$-rigid pair of $A$-modules (\ie $M$ and $P$ are basic
    $A$-modules) such that $|M|+|P|=|A|-1$. Then, there exist exactly
    two basic support $\tau$-tilting pairs $(M_i,P_i)$ ($i=1,2$) of
    $A$-modules such that $M$ and $P$ are direct summands of $M_i$ and
    $P_i$ respectively. In this case, we say that $(M_1,P_1)$ and
    $(M_2,P_2)$ are \emph{mutation of each other}.
  \end{enumerate}
\end{theorem}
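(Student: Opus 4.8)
The four assertions reinforce one another, and the natural plan is to bootstrap from classical tilting theory---concretely from \th\ref{characterization-of-ftors}---into the $\tau$-tilting setting. The single device that repeatedly upgrades ``rigid'' to ``$\tau$-rigid'' is this: if $M$ is $\tau$-rigid and $N\in\Fac M$, then $N$ is a quotient of some $M^{n}$, so from $\Hom_A(M^{n},\tau M)=0$ and the left exactness of $\Hom_A(-,\tau M)$ one gets $\Hom_A(N,\tau M)=0$; by the Auslander--Reiten formula $\Ext_A^{1}(M,N)\cong D\,\overline{\Hom}_A(N,\tau M)$ this says that $M$ is $\Ext$-projective in $\Fac M$. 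I would also settle at the outset the auxiliary inequality $|N|+|Q|\le|A|$ for any $\tau$-rigid pair $(N,Q)$, since it is used throughout.

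For (a) I would show that $M\mapsto\Fac M$ lands in $\ftors A$, that $\T\mapsto P(\T)$ lands in $\sttilt A$, and that the two assignments are mutually inverse. Given a support $\tau$-tilting pair $(M,P)$: the class $\Fac M$ is closed under quotients by definition and under extensions because the $\Ext$-projectivity just noted kills the relevant connecting map in the long exact $\Hom_A(M,-)$-sequence; and it is functorially finite by \th\ref{characterization-of-ftors}, the point being that $M$ serves as the $\Ext$-projective generator required there, i.e.\ that $\add M$ exhausts the $\Ext$-projectives of $\Fac M$---the inclusion ``$\subseteq$'' is the device, while ``$\supseteq$'', where support $\tau$-tiltingness rather than mere $\tau$-rigidity is used, follows by completing an $\Ext$-projective $E$ of $\Fac M$ to a $\tau$-rigid pair alongside $(M,P)$ and invoking the auxiliary inequality. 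Conversely, for $\T\in\ftors A$ the module $P(\T)$ of \th\ref{characterization-of-ftors} is a tilting $(A/\ann\T)$-module; I would check it is $\tau$-rigid over $A$ and that, paired with a projective $Q$ covering the simples annihilated by $\ann\T$, it satisfies $|P(\T)|+|Q|=|A|$, hence is support $\tau$-tilting in the sense of \th\ref{def-sttilt}. That the assignments are mutually inverse is then immediate: $\Fac P(\T)=\T$ holds by the construction of $P(\T)$, and $P(\Fac M)=M$ is precisely the identification of $\Ext$-projectives made above.

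For (b), that $\lperp(\tau U)=\setP{X}{\Hom_A(X,\tau U)=0}$ is a torsion class is formal---$\Hom_A(-,\tau U)$ sends epimorphisms to monomorphisms (closure under quotients) and is left exact (closure under extensions)---and the device above gives $\Fac U\subseteq\lperp(\tau U)$. The content is the functorial finiteness of $\lperp(\tau U)$ together with the construction of the Bongartz completion: I would build $T$ by a universal-extension/approximation argument, taking a minimal left $\lperp(\tau U)$-approximation $A\to T_{0}$ whose cokernel lies in $\add U$, setting $T:=T_{0}\oplus U$, and verifying directly that $T$ is $\tau$-rigid, that $\Fac T=\lperp(\tau U)$ (so this torsion class is functorially finite by (a)), and that $|T|=|A|$, i.e.\ that $T$ is $\tau$-tilting and hence $U\in\add T=\add P(\lperp(\tau U))$. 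The delicate point is that, in contrast to classical Bongartz, $\Ext_A^{1}(U,U)$ need not vanish, so controlling $\Ext_A^{1}(U,-)$ along the construction must proceed through $\tau U$ via the Auslander--Reiten formula.

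Parts (c) and (d) then follow from (a) and (b). For (c), if $M$ is $\tau$-rigid then $M\in\add P(\lperp(\tau M))$ with $P(\lperp(\tau M))$ a $\tau$-tilting module by (b), so maximality forces $M=P(\lperp(\tau M))$ and thus $M$ is $\tau$-tilting; the converse is again the auxiliary inequality. For (d), given a basic $\tau$-rigid pair $(M,P)$ with $|M|+|P|=|A|-1$, existence of at least one completion follows from a Bongartz-type argument and a second from the explicit exchange construction---replacing the extra summand $X$ of a completion $M\oplus X$ by the cokernel or kernel of a minimal left or right $\add M$-approximation of $X$---and the real work is to show that there are \emph{exactly} two. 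This parallels the classical theory of almost complete tilting modules, which may have one or two complements; the improvement to exactly two is what the support (projective) part buys. I expect this final count---together with the systematic substitution of $\tau$-rigidity arguments for rigidity arguments underlying the constructions in (b) and (d)---to be the main obstacle.
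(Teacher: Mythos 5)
This statement is not proved in the paper at all: \th\ref{tau-tilting-theory} is quoted verbatim from Adachi--Iyama--Reiten \cite[Thms.\ 2.7, 2.10, 2.12 and 2.18]{adachi_tau-tilting_2014} as background, with no argument supplied. There is therefore no ``paper's own proof'' to compare against, and producing one was not the intended exercise; a reference would have sufficed.

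That said, your outline is a reasonable reconstruction of how the cited results are established, and it makes the right moves: the Auslander--Reiten formula device that converts $\tau$-rigidity into $\Ext$-projectivity throughout $\Fac M$; the identification of $\add M$ with the $\Ext$-projectives of $\Fac M$ for the bijection in (a), with the inclusion $\supseteq$ resting on the numerical constraint $|N|+|Q|\le|A|$; a Bongartz-type universal extension for (b) with the warning that $\Ext^1_A(U,U)$ need not vanish so everything must be routed through $\tau U$; and (c) deduced from (b) via maximality. Two places are underargued relative to what a complete proof requires. First, in (a) you assert that $P(\T)$ for $\T\in\ftors A$, which \th\ref{characterization-of-ftors} only certifies as a tilting $(A/\ann\T)$-module, is $\tau$-rigid over $A$; this genuinely needs an argument relating $\tau_{A/\ann\T}$ to $\tau_A$ on $\mod(A/\ann\T)$ and is not automatic from classical tilting theory. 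Second, (d) is where the real content lives: you correctly flag that ``exactly two'' is the hard part and that it improves on the classical one-or-two dichotomy for almost complete tilting modules, but you leave it as an obstacle rather than resolving it. In Adachi--Iyama--Reiten the count is obtained not by a direct exchange computation but by passing through the order-theoretic structure on $\ftors A$: an almost complete support $\tau$-tilting pair $(M,P)$ determines the interval $[\Fac M,\,\lperp(\tau M)\cap P^\perp]$ in $\ftors A$, and the two completions are precisely the endpoints, so the statement reduces to showing that this interval contains exactly two functorially finite torsion classes admitting $(M,P)$ as a direct summand of their $\Ext$-projectives. Without that reduction the ``exactly two'' claim does not follow from a Bongartz argument plus an ad hoc exchange.

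So: correct in spirit as a sketch of the source material, but not a proof, and orthogonal to anything the present paper does, since the paper deliberately takes \th\ref{tau-tilting-theory} as given.
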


The following definitions are suggested by
\th\ref{tau-tilting-theory}.

\begin{definition}
  \th\label{QsttiltA} Let $A$ be a finite dimensional algebra. The set
  $\sttilt A$ has a natural partial order defined as follows: For
  $M,N\in\sttilt A$, we define $M\geqslant N$ if
  $\Fac M\supseteq \Fac N$ or, equivalently, there exist an
  epimorphism $M^k\to N$ for some $k>0$.  We denote the Hasse quiver
  of $\sttilt A$ by $Q(\sttilt A)$.
\end{definition}

\begin{proposition}
  \th\label{arrows-mutations}
  \cite[Cor. 2.31]{adachi_tau-tilting_2014} Arrows in the Hasse quiver
  $Q(\sttilt A)$ correspond to mutations.
\end{proposition}

As a consequence of \th\ref{arrows-mutations} and
\th\ref{tau-tilting-theory}\eqref{it:mutation-in-sttiltA}, the
underlying graph of $Q(\sttilt A)$ is $|A|$-regular.

\subsection{Silting objects and support $\tau$-tilting modules}

We remind the reader of the definition of a silting object in a
triangulated category. Let $\T$ be a $K$-linear, $\Hom$-finite,
Krull-Schmidt, triangulated category with suspension functor $\Sigma$.

\begin{definition}
  Let $S\in\T$.
  \begin{enumerate}
  \item We say that $S$ is \emph{presilting} if for all $k>0$ we have
    $\T(S,\Sigma^k S)=0$.
  \item We say that $S$ is \emph{silting} if $S$ is presilting and
    $\T=\thick S$.
  \end{enumerate}
  We denote the set of isomorphism classes of basic silting
  (resp. presilting) objects in $\T$ by $\silt\T$
  (resp. $\presilt\T$).
\end{definition}

Following \cite[Def. 2.10, Thm. 2.11]{aihara_silting_2012}, we define
a partial order on $\silt\T$ by declaring $M\geq N$ if and only if for
all $k>0$ we have $\T(M,\Sigma^k N)=0$.

If $\X,\Y$ are full subcategories of $\T$ which are closed under
direct sums and direct summands, then we denote by $\X*\Y$ the full
subcategory of $\T$ given by all objects $Z\in\T$ such that there
exist $X\in\X$, $Y\in\Y$ and a triangle $X\to Z\to Y\to \Sigma X$.
Note that if $\T(\X,\Y)=0$, then $\X\ast\Y$ is closed under direct
summands, see \cite[Prop. 2.1]{iyama_mutation_2008}.

\begin{theorem}
  \th\label{silting-object-gives-basis-of-K0} Let $\T$ be a
  $K$-linear, $\Hom$-finite, Krull-Schmidt, triangulated category and
  $S=S_1\oplus S_2\oplus\cdots\oplus S_n$ a silting object in
  $\T$. Then, the following statements hold:
  \begin{enumerate}
  \item\label{it:S-K0-basis} \cite[Thm. 2.27]{aihara_silting_2012} The
    Grothendieck group of $\T$ has a basis $\set{[S_1],\dots,[S_n]}$.
  \item\label{it:bijection-sttilt-twosilt}
    \cite[Thm. 0.2]{iyama_intermediate_2014} Let
    $A:=\End_\T(S)$. Then, the map $M\mapsto \T(S,M)$ induces an
    order-preserving bijection
    \[
      \begin{tikzcd}
        \twosilt{S}\T:=\setP{M\in\silt \T}{M\in (\add S)*(\add\Sigma
          S)}\rar[leftrightarrow]&\sttilt A.
      \end{tikzcd}
    \]
  \end{enumerate}
\end{theorem}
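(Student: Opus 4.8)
These two facts are recalled from the literature, so the plan is not so much to reprove them from scratch as to indicate how they fit together; I would establish \eqref{it:S-K0-basis} first and then use it to reduce \eqref{it:bijection-sttilt-twosilt} to the main classification of \cite{adachi_tau-tilting_2014}.

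For \eqref{it:S-K0-basis}, set $A:=\End_\T(S)$. The presilting condition makes $\add S$ the co-heart of a bounded co-$t$-structure on $\thick S=\T$; in the algebraic case this upgrades to a triangle equivalence $\KKK{A}\xto{\sim}\T$ sending the regular module $A$ to $S$, and hence the indecomposable projective $e_iA$ to $S_i$. From the co-$t$-structure one obtains a weight-complex functor $\T\to\KKK{\add S}$ which is an isomorphism on Grothendieck groups. Since every triangle in $\KKK{\add S}$ is degreewise split, $K_0(\KKK{\add S})$ is the split Grothendieck group of the additive category $\add S$, which is free on the classes of $S_1,\dots,S_n$ (equivalently $K_0(\KKK{A})\cong K_0(\proj A)=\ZZ^{n}$, with independence of the $[e_iA]$ witnessed by the Euler pairing against the simple $A$-modules). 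Transporting back gives the claimed basis of $K_0(\T)$.

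For \eqref{it:bijection-sttilt-twosilt}, put $H:=\T(S,-)\colon\T\to\mod A$. I would first record that $H$ restricts to an equivalence $\add S\xto{\sim}\proj A$ with $H(S_i)=e_iA$. Given $M\in\twosilt{S}\T$, the condition $M\in(\add S)*(\add\Sigma S)$ yields a triangle $S^1\xto{f}S^0\to M\to\Sigma S^1$ with $S^0,S^1\in\add S$; applying $H$ and using $\T(S,\Sigma^kS^i)=0$ for $k>0$ and $i=0,1$, the long exact sequence collapses to an exact sequence $H(S^1)\xto{H(f)}H(S^0)\to H(M)\to0$. So $H$ attaches to $M$ the module $H(M)$ equipped with a two-term projective presentation, equivalently --- after transporting through $\add S\simeq\proj A$ --- a two-term complex $P^\bullet\in\KKK{A}$ in degrees $-1,0$ with $H^0(P^\bullet)=H(M)$. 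Matching $(\add S)*(\add\Sigma S)$ with the two-term complexes over $A$ (through the equivalence $\KKK{A}\simeq\T$ in the algebraic case, or directly via weight complexes as in \cite{iyama_intermediate_2014}), the assignment $M\mapsto P^\bullet$ identifies $\twosilt{S}\T$ with the two-term silting complexes $\ttwosilt A$, and then $P^\bullet\mapsto H^0(P^\bullet)$ --- together with the summands of $P^{-1}$ that do not reach $H^0(P^\bullet)$, which become the projective part of the support $\tau$-tilting pair --- is exactly the Adachi--Iyama--Reiten bijection $\ttwosilt A\xto{\sim}\sttilt A$ of \cite[Thm.~3.2]{adachi_tau-tilting_2014}. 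Finally, for order-preservation I would translate $M\ge N$ in $\silt\T$, \ie $\T(M,\Sigma^kN)=0$ for all $k>0$, through these presentations into the inclusion $\Fac H(M)\supseteq\Fac H(N)$ of the torsion classes generated by $H(M)$ and $H(N)$, which by definition is $H(M)\ge H(N)$ in $\sttilt A$.

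The parts above that are mere diagram chases are easy; I expect the real work to be hidden in two places, both carried out in the references. In \eqref{it:bijection-sttilt-twosilt} one must show that $H(M)$ is not merely $\tau$-rigid but \emph{support $\tau$-tilting} (the maximality), that every support $\tau$-tilting module occurs, and that the partial orders agree --- this is \cite[Thm.~0.2]{iyama_intermediate_2014}. And in \eqref{it:S-K0-basis} the substantive point is linear independence rather than generation --- this is \cite[Thm.~2.27]{aihara_silting_2012}. Part \eqref{it:S-K0-basis} is precisely what allows the reduction from a general $\T$ to $\KKK{A}$, after which \cite{adachi_tau-tilting_2014} applies directly.
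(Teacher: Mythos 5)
The paper gives no proof of \th\ref{silting-object-gives-basis-of-K0}: it is stated with the citations to \cite[Thm.~2.27]{aihara_silting_2012} and \cite[Thm.~0.2]{iyama_intermediate_2014} inline and treated as a blackbox. Your proposal correctly identifies exactly those two references as where the substance lives and your reduction sketch (weight-complex functor / co-$t$-structure for the $K_0$ computation; $\T(S,-)\colon(\add S)*(\add\Sigma S)\to\mod A$ for the bijection, with the order translation via $\Fac$) is a fair outline of how those cited proofs proceed, so you are in accord with the paper's approach.
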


For later use, we define
$\twopresilt{S}{\T}:=(\presilt\T)\cap((\add S)*(\add\Sigma S))$. For a
finite dimensional algebra $A$, we define
$\ttwosilt{A}:=\twosilt{A}{(\KKK{A})}$. Thus, $\ttwosilt{A}$ consists
of the silting complexes in $\KKK{A}$ which are concentrated in
cohomological degrees $-1$ and $0$. Therefore we call the objects of
$\twosilt{S}{\T}$ (resp. $\twopresilt{S}{\T}$) \emph{two-term silting
  objects with respect to $S$} (resp. \emph{two-term presilting
  objects with respect to $S$}). The following is an immediate
consequence of
\th\ref{silting-object-gives-basis-of-K0}\eqref{it:bijection-sttilt-twosilt}.

\begin{corollary}
  Let $\T$ be a $K$-linear, $\Hom$-finite, Krull-Schmidt, triangulated
  category and $S$ a silting object in $\T$. Set
  $A:=\End_\T(S)$. Then, there are order preserving bijections
  \[
    \begin{tikzcd}
      \twosilt{S}{\T}\rar[leftrightarrow]&\sttilt{A}\rar[leftrightarrow]&\ttwosilt{A}.
    \end{tikzcd}
  \]
\end{corollary}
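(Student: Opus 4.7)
The plan is to deduce both bijections as direct instances of \th\ref{silting-object-gives-basis-of-K0}\eqref{it:bijection-sttilt-twosilt}, applied in two different settings, and then compose them.

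First I would apply \th\ref{silting-object-gives-basis-of-K0}\eqref{it:bijection-sttilt-twosilt} to the given pair $(\T,S)$. Since by definition $A=\End_\T(S)$, the theorem immediately supplies an order-preserving bijection
\[
  \twosilt{S}{\T}\;\xleftrightarrow{M\mapsto \T(S,M)}\;\sttilt A.
\]
This gives the left-hand bijection of the statement.

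Next I would apply the same theorem to the triangulated category $\KKK{A}$ with the silting object $A$ itself. Here one needs to observe that $A\in\KKK{A}$ is indeed silting (it is a classical tilting, hence silting, object) and that $\End_{\KKK{A}}(A)\cong A$ canonically. Under this identification, the definition $\ttwosilt{A}:=\twosilt{A}{(\KKK{A})}$ matches the left-hand side of the theorem's bijection, so \th\ref{silting-object-gives-basis-of-K0}\eqref{it:bijection-sttilt-twosilt} yields an order-preserving bijection
\[
  \ttwosilt{A}\;\xleftrightarrow{N\mapsto \KKK{A}(A,N)}\;\sttilt A.
\]
This supplies the right-hand bijection.

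Composing the first bijection with the inverse of the second gives the required chain
\[
  \twosilt{S}{\T}\;\leftrightarrow\;\sttilt A\;\leftrightarrow\;\ttwosilt{A},
\]
all arrows being order-preserving because each constituent is. There is no real obstacle here: the whole point is that \th\ref{silting-object-gives-basis-of-K0}\eqref{it:bijection-sttilt-twosilt} is applied twice, and the corollary is essentially a bookkeeping statement recording that the two bijections share a common middle term $\sttilt A$. The only mild care needed is to check that applying the theorem in the second instance is legitimate, i.e.\ that $\KKK{A}$ is $K$-linear, $\Hom$-finite and Krull–Schmidt with $A$ as a silting object, which is standard for a finite dimensional algebra $A$.
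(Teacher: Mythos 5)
Your proposal is correct and matches the paper's intended argument: the corollary is explicitly stated as an immediate consequence of \th\ref{silting-object-gives-basis-of-K0}\eqref{it:bijection-sttilt-twosilt}, and the natural reading is exactly the double application you describe, once to $(\T,S)$ and once to $(\KKK{A},A)$, composed through the common middle term $\sttilt A$. The small verification you flag, that $\KKK{A}$ satisfies the hypotheses with $A$ as silting object and $\End_{\KKK A}(A)\cong A$, is the correct point of care and is standard.
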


\begin{corollary}
  \th\label{char-tau-rigid-finite} Let $A$ be a finite dimensional
  algebra. Then, each of the following conditions is equivalent to $A$
  being $\tau$-tilting finite:
  \begin{enumerate}
  \item There are only finitely many isomorphism classes of
    indecomposable $\tau$-rigid $A$-modules.
  \item There are only finitely many isomorphism classes of
    indecomposable two-term presilting complexes in $\KKK{A}$.
  \item The set $\ttwosilt{A}$ is finite.
  \item The set $\ftors A$ is finite.
  \item The set $\ftorsf A$ is finite.
  \end{enumerate}
\end{corollary}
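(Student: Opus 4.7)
My plan is to assemble this corollary by chaining together bijections that are either recorded already in the excerpt or follow from the machinery of $\tau$-tilting theory by standard arguments. Very little independent work is required; the proof is essentially an exercise in bookkeeping.

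First I would dispose of the module-theoretic side. The equivalence of $\tau$-tilting finiteness with (a) is already noted in the paragraph following \th\ref{tau-rigid_finite_algebra}: by Bongartz completion (\th\ref{tau-tilting-theory}\eqref{it:Bongartz-completion}) every indecomposable $\tau$-rigid module is a direct summand of some basic $\tau$-tilting module, and conversely every basic $\tau$-tilting module has at most $|A|$ indecomposable summands, so both finiteness statements coincide. The same paragraph also identifies these with the finiteness of $\sttilt A$, since a basic support $\tau$-tilting $A$-module is determined by its maximal $\tau$-rigid part and a choice of $P\in\proj A$, and there are only finitely many isomorphism classes of basic projectives. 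The equivalence with (d) is then immediate from the bijection $\sttilt A\leftrightarrow \ftors A$ of \th\ref{tau-tilting-theory}\eqref{it:sttiltA-torsA}, and $(d)\Leftrightarrow(e)$ follows from \th\ref{characterization-of-ftors}, which sets up the bijection $\T\leftrightarrow\T^\perp$ between functorially finite torsion and torsion-free classes.

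To handle the silting-theoretic conditions (b) and (c), I would invoke the corollary stated just before the present statement, which provides an order-preserving bijection $\sttilt A\leftrightarrow\ttwosilt A$. This gives $\tau$-tilting finiteness $\Leftrightarrow$ (c) directly. For $(a)\Leftrightarrow(b)$, I would pass through the correspondence sending an indecomposable $\tau$-rigid module $M$ to its minimal projective presentation $P^{-1}\to P^0$, viewed as a two-term complex in $\KKK A$ concentrated in degrees $-1,0$; this is a bijection onto the set of indecomposable two-term presilting complexes whose degree-$0$ term is nonzero. The remaining indecomposable two-term presilting complexes are precisely the shifts $\Sigma P$ of the $|A|$ indecomposable projectives, so (a) and (b) differ by a bounded finite set and hence are equivalent.

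I do not expect any genuine obstacle. The only point requiring mild care is to verify that the $\tau$-rigid/two-term presilting dictionary correctly accounts for the shifted-projective summands (since otherwise one could undercount the indecomposable two-term presilting complexes). Once this bookkeeping is made explicit, all five conditions are formally equivalent to $\tau$-tilting finiteness by concatenating the bijections above.
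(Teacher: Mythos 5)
Your proposal is correct and follows essentially the same route the paper intends (the paper states this as a corollary without proof, relying on the preceding bijections): equivalence with (a) via Bongartz completion as already noted in the introduction, with (c) and (d) via the chain $\sttilt A\leftrightarrow\ttwosilt A\leftrightarrow\ftors A$ from \th{silting-object-gives-basis-of-K0}\eqref{it:bijection-sttilt-twosilt} and \th{tau-tilting-theory}\eqref{it:sttiltA-torsA}, with (e) via \th{characterization-of-ftors}, and with (b) via the bijection between $\tau$-rigid pairs and two-term presilting complexes from Adachi--Iyama--Reiten, as you correctly describe including the bookkeeping for the shifted projectives.
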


\section{Main results on support $\tau$-tilting modules and torsion pairs}

\label{sec:tau-rigid finite algebras}

\subsection{A general result on mutation of torsion classes}

The aim of this subsection is to prove the following general result,
which is an extension of \cite[Thm. 2.35]{adachi_tau-tilting_2014} to
torsion classes which are not necessarily functorially finite, and
therefore we need a completely different argument to prove it. It is
our main tool for proving the characterization of $\tau$-tilting
finite algebra given in \th\ref{all torsion classes are functorially
  finite}.

\begin{theorem}
  \th\label{main-torsion-classes} Let $A$ be a finite dimensional
  algebra and $M$ a support $\tau$-tilting $A$-module. Then, the
  following statements hold:
  \begin{enumerate}
  \item\label{it:main-torsion-classes-a} Let $\T$ be a torsion class
    in $\mod A$ such that $\Fac M\supsetneq\T$.  Then, there exists
    $N\in\sttilt A$ satisfying the following conditions:
    \begin{itemize}
    \item The support $\tau$-tilting $A$-modules $M$ and $N$ are
      mutation of each other.
    \item We have $\Fac M\supsetneq\Fac N\supset\T$.
    \end{itemize}

  \item\label{it:main-torsion-classes-b} Let $\T$ be a torsion class
    in $\mod A$ such that $\Fac M\subsetneq\T$.  Then, there exists
    $L\in\sttilt A$ satisfying the following conditions:
    \begin{itemize}
    \item The support $\tau$-tilting $A$-modules $M$ and $L$ are
      mutation of each other.
    \item We have $\Fac M\subsetneq \Fac L\subset\T$.
    \end{itemize}
  \end{enumerate}
\end{theorem}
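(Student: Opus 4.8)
The plan is to prove part \eqref{it:main-torsion-classes-a}; part \eqref{it:main-torsion-classes-b} follows by a dual argument (or by applying the statement to the opposite algebra, using that torsion-free classes in $\mod A$ correspond to torsion classes in $\mod A^{\op}$ and that $\tau$-tilting theory is self-dual). So fix a support $\tau$-tilting module $M$ and a torsion class $\T$ with $\Fac M\supsetneq\T$. Write $\Fac M = P(\Fac M)$-generated; by \th\ref{tau-tilting-theory}\eqref{it:sttiltA-torsA} we may identify $M$ with $P(\Fac M)$ and think of $\Fac M$ as a functorially finite torsion class. Since $\Fac M \supsetneq \T$, pick an indecomposable summand $M_i$ of $M$ that is not in $\T$ (such a summand exists: otherwise $\add M\subseteq\T$, hence $\Fac M = \Fac(P(\Fac M))\subseteq\T$, a contradiction). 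The candidate for $N$ is the mutation of $M$ at $M_i$ in the "downward" direction — i.e.\ the support $\tau$-tilting module $N$ with $\Fac M\supsetneq \Fac N$ that is a mutation of $M$ and differs from $M$ precisely by removing $M_i$; its existence is guaranteed by \th\ref{tau-tilting-theory}\eqref{it:mutation-in-sttiltA} applied to the almost complete $\tau$-rigid pair obtained by deleting $M_i$, and by \th\ref{QsttiltA} one of the two mutations is strictly smaller. The real content is to show that we can choose $M_i$ so that this downward mutation $N$ still satisfies $\Fac N \supseteq \T$.

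The key step — and the main obstacle — is the following: among all indecomposable summands $M_i \notin \T$, we must find one whose removal (downward mutation) does not "overshoot" $\T$, i.e.\ whose resulting $\Fac N$ still contains $\T$. The natural strategy is a minimality/maximality argument inside the interval $[\T, \Fac M]$ of torsion classes. I would proceed as follows. Among the indecomposable summands $M_i$ of $M$ with $M_i\notin\T$, consider the torsion class $\T_i := \Filt(\Fac(M/M_i) \cup \{N_i\})$ obtained after mutation, where $N_i$ is the new summand (possibly zero, in the support case); equivalently $\Fac N$ where $N$ is the downward mutation at $M_i$. Each such $\T_i$ satisfies $\T_i \subsetneq \Fac M$. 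One wants: some $\T_i \supseteq \T$. Suppose not, so for every such $i$ there is $X_i \in \T$ with $X_i\notin\T_i$. Since $X_i\in\T\subseteq\Fac M$, write the $\Fac M$-approximation / decomposition of $X_i$ and analyze which summands $M_j$ of $M$ are "needed" to generate $X_i$; the failure $X_i\notin\T_i$ means $M_i$ is needed for $X_i$. Running this over all $i$ with $M_i\notin\T$, and using that the summands $M_j\in\T$ lie in $\T$ already, one derives that $\T$ itself must generate all of $\Fac M$ — contradiction with $\Fac M\supsetneq\T$. Making "needed" precise is the crux: I expect to use that $\Fac M$ is a functorially finite torsion class with $\Ext$-projectives exactly $\add M$, together with the description of mutation in terms of left/right $\add(M/M_i)$-approximations and exchange triangles from \cite{adachi_tau-tilting_2014}, to show that if $X\in\Fac M$ is not in the torsion class generated after mutating away each of several summands simultaneously, then $X$ cannot lie in the torsion class generated by the remaining summands together with $\T$.

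Having found $M_i\notin\T$ with $\Fac N\supseteq\T$ and $\Fac M\supsetneq\Fac N$, it remains only to upgrade $\Fac N\supseteq\T$ to $\Fac N\supset\T$ (strict), which is immediate: $M_i\in\Fac N$ but $M_i\notin\T$, so $\Fac N\neq\T$. This gives the required $N$, completing part \eqref{it:main-torsion-classes-a}. For part \eqref{it:main-torsion-classes-b}, instead of working in $\mod A$ directly one passes to $\mod A^{\op}$: torsion classes containing $\Fac M$ correspond to torsion-free classes contained in $\Sub(\nu M)$ (or use the anti-equivalence between $\tors A$ and $\torsf A$ given by $\T\mapsto \T^{\perp}$), and the statement $\Fac M\subsetneq\T$ becomes the dual situation to which part \eqref{it:main-torsion-classes-a} applies; unwinding the duality yields the module $L$ with $\Fac M\subsetneq\Fac L\subset\T$. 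I expect part \eqref{it:main-torsion-classes-b} to require no new ideas once \eqref{it:main-torsion-classes-a} is established, so essentially all the work is concentrated in the combinatorial "no overshoot" argument above.
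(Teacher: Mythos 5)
Your high-level structure (locate a downward mutation that does not overshoot $\T$, derive a contradiction if none exists) matches the paper's, and your handling of part \eqref{it:main-torsion-classes-b} by duality is fine. However, the crux you explicitly flag as open — making precise which summands are ``needed'' and turning the failure of every candidate mutation into the conclusion that $\T$ generates all of $\Fac M$ — is where the proof actually lives, and you have no mechanism for it. Moreover your selection criterion is not the right one: choosing $M_i$ with $M_i\notin\T$ does not guarantee that mutation at $M_i$ goes strictly downward. By \cite[Thm.~2.30]{adachi_tau-tilting_2014} the mutation $\mu_i(M)$ is strictly smaller than $M$ if and only if $M_i\notin\Fac(M/M_i)$; this is independent of whether $M_i\in\T$, so ``existence is guaranteed'' in your sketch is false without further argument, and your candidate set may contain summands with only an upward mutation.

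What the paper does, and what your plan entirely omits, is pass to $B:=\End_A(M)$ via the tilting functor $F=\Hom_A(M,-)$, under which $\Fac M\simeq\Sub DM$ as exact categories (\th\ref{preparation1}). The correct mutation directions are governed not by ``$M_i\notin\T$'' but by simple $B$-modules $S_k$: $M>\mu_k(M)$ iff $S_k\in\Sub DM$, and if in addition $S_k\notin F\T$ then $\Fac\mu_k(M)\supset\T$ (\th\ref{bigger or smaller}). One shows $S_k\in\Sub DM\setminus F\T$ implies $M_k\notin\T$, but not conversely, so the two candidate sets genuinely differ. The contradiction when no such $S_k$ exists is also not the informal ``$\T$ generates $\Fac M$ because every $X_i$ needs $M_i$'' — it is that the cokernels $C_k$ of right $\add(M/M_k)$-approximations have $FC_k$ with only $S_k$ as composition factor, hence lie in $\T$, and then a non-obvious filtration argument using nilpotency of $\rad\End_A(M)$ (\th\ref{general-observation}) gives $M\in\Tors{C_1\oplus\cdots\oplus C_n}\subseteq\T$, a contradiction. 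Your plan names none of these tools; without them the argument does not close.
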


Let us begin with a technical result. Let $\E$ and $\F$ be exact
categories. An \emph{equivalence of exact categories} is an
equivalence $F\colon\E\to\F$ of additive categories such that a
complex $0\to X\to Y\to Z\to0$ in $\E$ is an admissible exact sequence
if and only if $0\to FX\to FY\to FZ\to0$ is an admissible exact
sequence in $\F$.

\begin{proposition}
  \th\label{preparation1} Let $A$ be a finite dimensional algebra, $M$
  a support $\tau$-tilting $A$-module and $B:=\End_A(M)$.  Let
  \[
    F:=\Hom_A(M,-):\mod A\to\mod B.
  \]
  Then, the following statements hold:
  \begin{enumerate}
  \item\label{it:prep1-c} If $f:X\to Y$ is a morphism in $\Fac M$ such
    that $Ff$ is surjective, then $f$ is surjective.
  \item\label{it:prep1-a} The functor $F$ induces an equivalence of
    exact categories $F|_{\Fac M}\colon\Fac M\xto{\sim}\Sub DM$.
  \item\label{it:prep1-b} If $\T$ is a torsion class in $\mod A$
    contained in $\Fac M$, then $F\T$ is closed under extensions in
    $\mod B$.
  \end{enumerate}
\end{proposition}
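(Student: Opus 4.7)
The plan is to prove the three parts in order, with the only nontrivial input being the classical tilting theorem, which will be invoked in part \eqref{it:prep1-a}. For \eqref{it:prep1-c}, I plan to use only the defining property that $\Fac M$ is generated by $M$. Given $f\colon X\to Y$ in $\Fac M$ with $Ff$ surjective, I would fix an epimorphism $p\colon M^n\twoheadrightarrow Y$ witnessing $Y\in\Fac M$, regard $p$ as an element of $\Hom_A(M,Y)^n=(FY)^n$, and lift it through the surjection $(Ff)^n\colon (FX)^n\to (FY)^n$ to a morphism $g\colon M^n\to X$ with $fg=p$. Since $p$ is surjective, so is $f$.

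For \eqref{it:prep1-a}, by the final clause of Proposition \ref{characterization-of-ftors}, $M$ is a tilting module over $A':=A/\ann M$. Since $\Fac M\subseteq\mod A$ coincides with $\Fac M\subseteq\mod A'$ and $\End_A(M)=\End_{A'}(M)=B$, I will apply the classical Brenner--Butler tilting theorem to $M$ viewed as an $A'$-module. This yields the equivalence of categories $F|_{\Fac M}\colon \Fac M\xto{\sim}\Sub DM$; since both sides inherit their exact structures from the ambient abelian categories, the tilting equivalence is automatically one of exact categories.

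For \eqref{it:prep1-b}, I would start with a short exact sequence $0\to FX\to Z\to FY\to 0$ in $\mod B$ with $X,Y\in\T\subseteq\Fac M$. Then $FX,FY\in\Sub DM$, and since $\Sub DM$ is a torsion-free class in $\mod B$, hence closed under extensions, also $Z\in\Sub DM$. By \eqref{it:prep1-a} there exists $W\in\Fac M$ with $Z\cong FW$, and the exact-category equivalence transports the given sequence in $\mod B$ to a short exact sequence $0\to X\to W\to Y\to 0$ in $\mod A$ with all terms in $\Fac M$. Closure of $\T$ under extensions then forces $W\in\T$, so $Z\cong FW\in F\T$. The main obstacle is \eqref{it:prep1-a}, which rests entirely on identifying $F|_{\Fac M}$ with the standard tilting equivalence; once this is in place, parts \eqref{it:prep1-c} and \eqref{it:prep1-b} follow by routine arguments, using only the definition of $\Fac M$ and the fact that torsion-free classes are closed under extensions.
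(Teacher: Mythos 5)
Your argument for part \eqref{it:prep1-c} is correct and in fact more direct than the paper's: you lift a fixed epimorphism $M^n\twoheadrightarrow Y$ through the surjection $(Ff)^n$, whereas the paper passes to the cokernel of $f$ and uses $\Ext$-projectivity of $M$ in $\Fac M$. Part \eqref{it:prep1-b} is also fine, but only once \eqref{it:prep1-a} is available in full strength.

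The gap is in part \eqref{it:prep1-a}. Brenner--Butler gives an equivalence of additive categories $F|_{\Fac M}\colon\Fac M\to\Sub DM$ which sends exact sequences in $\Fac M$ to exact sequences in $\Sub DM$ (because $\Ext^1_A(M,-)$ vanishes on $\Fac M$). But the definition of an equivalence of exact categories used here is a biconditional: a complex $0\to X\to Y\to Z\to 0$ with terms in $\Fac M$ must be exact \emph{if and only if} its image is exact in $\mod B$. The converse direction is not ``automatic'' from having an additive equivalence between extension-closed subcategories with their inherited exact structures; it is precisely the point the paper's proof labours over, using part \eqref{it:prep1-c} to get surjectivity of $g$ and then $\Ext^1_A(M,\Ker g)=0$ to identify $\Ker g$ with $X$. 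Your proof of \eqref{it:prep1-b} invokes exactly this converse direction when you transport $0\to FX\to Z\to FY\to 0$ back to a short exact sequence in $\mod A$, so the gap propagates. It is easily repaired: either run the paper's argument, or invoke the other half of the Brenner--Butler package, namely that the quasi-inverse $-\otimes_B M$ is exact on $\Sub DM$ because $\operatorname{Tor}_1^B(N,M)=0$ for every $N\in\Sub DM$; applying it to an exact sequence $0\to FX\to FY\to FZ\to 0$ then recovers an exact sequence $0\to X\to Y\to Z\to 0$ in $\mod A$. Either way, one more sentence of justification is genuinely needed where you wrote ``automatically.''
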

\begin{proof}
  \eqref{it:prep1-c} Let $X\xto{f}Y\xto{g}Z\to0$ be an exact sequence
  such that $Ff$ is surjective.  Since $\Im f\in\Fac M$ and $\Hom_A(M,\tau M)=0$,
  we have $\Ext^1_A(M,\Im f)=D\overline{\Hom}_A(\Im f,\tau M)=0$.
  By applying $F$ we obtain a complex
  \[
    \begin{tikzcd}
      FX\rar{Ff}&FY\rar{Fg}&FZ\rar&0
    \end{tikzcd}
  \]
  which is exact at $FZ$.  Since $Ff$ is surjective by assumption, we
  have $FZ=0$.  Finally, since $Z\in\Fac M$ we have that $Z=0$.

  \eqref{it:prep1-a} By the last part of
  \th\ref{characterization-of-ftors} we have that $M$ is a tilting
  $(A/\ann M)$-module and clearly we have
  $\Fac M\subset\mod (A/\ann M)$. Since $F=\Hom_{A/\ann M}(M,-)$ on
  $\Fac M$, we have that $F\colon\Fac M\to\Sub DM$ is an equivalence 
  by Brenner--Butler's tilting theorem
  \cite[Thm. VI.3.8]{assem_elements_2006}, with quasi-inverse
  $$G:=-\otimes_BM\colon\Sub DM\to\Fac M.$$
  Moreover, since $\Ext^1_A(M,\Fac M)=0$
  and ${\rm Tor}^B_1(\Sub DM,M)=0$, $F$ and $G$ induce bijections between
  short exact sequences in $\Fac M$ and those in $\Sub DM$. Thus the assertion follows.


  Part \eqref{it:prep1-b} now follows immediately from
  \eqref{it:prep1-a}.
\end{proof}

Let $\C$ be a full subcategory of $\mod A$. We define $\Tors{\C}$ to
be the smallest torsion class containing $\C$. Thus,
\[
  \Tors{\C} = \bigcap_{\C\subseteq\T\in\tors A} \T.
\]
Also, we define $\Filt\C$ to be the full subcategory of $\mod A$ whose
objects are the $A$-modules $M$ having a finite filtration
\[
  0= M_0\subset M_1\subset \cdots\subset M_t=M
\]
such that for all $i\in\set{0,1,\dots,t}$ we have
$M_{i+1}/M_i\in\C$.
The following observation follows easily from the definitions.

\begin{proposition}
  \th\label{describe tors} Let $A$ be a finite dimensional
  algebra. For every subcategory $\C$ of $\mod A$ we have
  $\Tors{\C}=\Filt(\Fac\C)$.
\end{proposition}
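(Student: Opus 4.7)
The plan is to prove the equality $\Tors{\C}=\Filt(\Fac\C)$ by a double inclusion, exploiting the fact that $\Fac\C$ is already closed under factor modules and direct sums and that $\Filt$ adds closure under extensions.

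For the inclusion $\Filt(\Fac\C)\subseteq\Tors{\C}$, I would argue directly from the defining properties of a torsion class. Any torsion class containing $\C$ is closed under factor modules and finite direct sums, hence contains $\Fac\C$. Being additionally closed under extensions, it must contain every module admitting a finite filtration with subquotients in $\Fac\C$, \ie all of $\Filt(\Fac\C)$. Taking the intersection over all torsion classes containing $\C$ yields $\Tors{\C}\supseteq\Filt(\Fac\C)$.

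For the reverse inclusion, the strategy is to check that $\Filt(\Fac\C)$ is itself a torsion class containing $\C$, so that by minimality $\Tors{\C}\subseteq\Filt(\Fac\C)$. The containment $\C\subseteq\Fac\C\subseteq\Filt(\Fac\C)$ is immediate. Closure under extensions follows by concatenating filtrations: given a short exact sequence $0\to X\to Y\to Z\to0$ with filtrations of lengths $s$ and $t$ on $X$ and $Z$ respectively, pulling back the filtration of $Z$ to $Y$ and stacking it on top of the filtration of $X$ yields a filtration of $Y$ of length $s+t$ whose subquotients all lie in $\Fac\C$.

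The only step that requires a moment of thought is closure under factor modules, but it is still straightforward: for $M\in\Filt(\Fac\C)$ with filtration $0=M_0\subset M_1\subset\cdots\subset M_t=M$ and a surjection $\pi\colon M\twoheadrightarrow N$, set $N_i:=\pi(M_i)$ to obtain a (possibly non-strict) filtration $0=N_0\subseteq N_1\subseteq\cdots\subseteq N_t=N$. Each subquotient $N_{i+1}/N_i$ is a factor module of $M_{i+1}/M_i\in\Fac\C$, and since $\Fac\C$ is stable under taking factor modules, $N_{i+1}/N_i\in\Fac\C$. After removing repetitions one obtains a filtration witnessing $N\in\Filt(\Fac\C)$. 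This completes the verification that $\Filt(\Fac\C)$ is a torsion class and hence the proof. There is no genuine obstacle here; the only subtlety is to note that the image filtration on a quotient may collapse, which is harmless.
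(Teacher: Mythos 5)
Your proof is correct and is precisely the straightforward verification the authors have in mind when they say the result "follows easily from the definitions" without writing out a proof. Both inclusions are handled properly, and you correctly flag the only mild subtlety — that the image filtration on a quotient may collapse and needs repetitions removed.
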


Note that $\Fac(\Filt\C)$ is not necessarily a torsion class, since it
need not be closed under extensions.

\begin{theorem}\th\label{recover}
  Let $A$ be a finite dimensional algebra, $M$ a support
  $\tau$-tilting $A$-module, $B:=\End_A(M)$, and
  $F\colon \T_0:=\Fac M\xto{\sim}\Y_0:=\Sub DM$ the equivalence given
  in \th\ref{preparation1}\eqref{it:prep1-a}. Then, the following
  statements hold:
  \begin{enumerate}
  \item\label{it:recover-a} The map
    \[
      \setP{\T\in\tors A}{\T\subset\T_0}\to\tors B
    \]
    given by $\T\mapsto\Tors{F\T}$ is injective.
  \item\label{it:recover-b} For each $\T\in\tors A$ with
    $\T\subset\T_0$, we have $F\T=\Y_0\cap\Tors{F\T}$.
  \end{enumerate}
\end{theorem}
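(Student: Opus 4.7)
The plan is to deduce (a) from (b) and then establish (b) by induction on filtration length. For (a), suppose $\T, \T' \in \tors A$ are contained in $\T_0$ and $\Tors{F\T} = \Tors{F\T'}$. Intersecting both sides with $\Y_0$ and applying (b) yields $F\T = F\T'$; since $F \colon \T_0 \to \Y_0$ is an equivalence of categories, $\T = \T'$.

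For (b), the inclusion $F\T \subseteq \Y_0 \cap \Tors{F\T}$ is immediate. The content is the reverse inclusion. Let $Y \in \Y_0 \cap \Tors{F\T}$. By \th\ref{describe tors}, $\Tors{F\T} = \Filt(\Fac(F\T))$, so $Y$ admits a filtration
\[
  0 = Y_0 \subset Y_1 \subset \cdots \subset Y_t = Y
\]
in $\mod B$ with each $Y_i/Y_{i-1} \in \Fac(F\T)$. I will show $Y \in F\T$ by induction on $t$. Two observations underpin the argument: (i) $\Y_0 = \Sub DM$ is trivially closed under submodules in $\mod B$, so every $Y_i$ lies in $\Y_0$; and (ii) any epimorphism $E' \twoheadrightarrow Y'$ in $\mod B$ between objects of $\Y_0$ is the image under $F$ of a morphism in $\Fac M$ (by \th\ref{preparation1}\eqref{it:prep1-a}) which is automatically surjective in $\mod A$ by \th\ref{preparation1}\eqref{it:prep1-c}, so $F^{-1}Y'$ is a factor of $F^{-1}E'$ in $\mod A$.

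In the inductive step, $Y_{t-1}$ lies in $\Y_0$ by (i), hence in $F\T$ by the induction hypothesis. Choose an epimorphism $\pi \colon Z \twoheadrightarrow Y/Y_{t-1}$ with $Z \in F\T$, and form the pullback of $\pi$ along $Y \twoheadrightarrow Y/Y_{t-1}$ in $\mod B$; this produces a short exact sequence $0 \to Y_{t-1} \to E \to Z \to 0$ together with a surjection $E \twoheadrightarrow Y$. Since $Y_{t-1}, Z \in F\T$ and $F\T$ is closed under extensions in $\mod B$ by \th\ref{preparation1}\eqref{it:prep1-b}, we obtain $E \in F\T$. Applying observation (ii) to the surjection $E \twoheadrightarrow Y$ (both sides in $\Y_0$) exhibits $F^{-1}Y$ as a factor of $F^{-1}E \in \T$ in $\mod A$; since $\T$ is closed under factor modules, $F^{-1}Y \in \T$, i.e.\ $Y \in F\T$.

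The main obstacle is that $\Y_0$ is not closed under factor modules in $\mod B$, so one cannot directly induct on the quotient $Y/Y_{t-1}$, which may fall outside $\Y_0$. The pullback construction sidesteps this by replacing the quotient with a module $E \in F\T$ that surjects onto $Y$; the essential inputs are that $F\T$ is closed under extensions in $\mod B$, and that $F\T$ is closed under precisely those factor maps whose target still lies in $\Y_0$, which is exactly the content of the exact equivalence $F \colon \Fac M \xto{\sim} \Sub DM$ together with \th\ref{preparation1}\eqref{it:prep1-c}.
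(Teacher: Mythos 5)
Your proof is correct and follows essentially the same route as the paper: induction on the filtration length supplied by \th\ref{describe tors}, a pullback along the quotient map to replace the top factor by a module in $F\T$ (using extension-closedness from \th\ref{preparation1}\eqref{it:prep1-b}), and \th\ref{preparation1}\eqref{it:prep1-c} to transport surjectivity back to $\mod A$. The only difference is packaging: the paper first proves $\Y_0\cap\Fac(F\T)=F\T$ as a standalone identity and then inducts to show $Y\in\Fac(F\T)$, while you carry the stronger invariant $Y_{t-1}\in F\T$ through the induction and invoke the surjectivity criterion once at the end — logically the same argument.
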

\begin{proof}
  Since $F$ is an equivalence, we only need to prove part
  \eqref{it:recover-b}; for which is sufficient to show that
  $\Y_0\cap\Tors{F\T}\subset F\T$.  By
  \th\ref{preparation1}\eqref{it:prep1-c}, we have
  \begin{equation}
    \label{eq:first step}
    \Y_0\cap\Fac(F\T)=F\T.
  \end{equation}
  Thus we only have to show that $\Y_0\cap\Tors{F\T}\subset\Fac(F\T)$.

  (i) Let $0\to X\to Y\to Z\to0$ be an exact sequence with
  $X,Z\in\Fac(F\T)$ and $Y\in\Y_0$. We will show that $Y\in\Fac(F\T)$.

  Since $Y\in\Y_0=\Sub DM$ which is obviously closed under submodules,
  we have $X\in\Y_0$.  Thus $X\in\Y_0\cap\Fac(F\T)=F\T$ by
  \eqref{eq:first step}.  Take a surjection $f:Z'\to Z$ with
  $Z'\in F\T$ and consider a pull-back diagram
  \begin{center}
    \begin{tikzcd}
      0 \rar & X \rar\dar[equals] & Y' \rar \dar{g} & Z' \rar\dar{f} & 0\\
      0 \rar & X \rar & Y \rar & Z \rar & 0
    \end{tikzcd}
  \end{center}
  Since $F\T$ is extension-closed by
  \th\ref{preparation1}\eqref{it:prep1-b}, we have $Y'\in F\T$.  By
  the Five lemma we have that $g$ is surjective, hence $Y\in\Fac(F\T)$
  as required.

  (ii) Now we are ready to show that
  $\Y_0\cap\Tors{F\T}\subset\Fac(F\T)$.  Let
  $X\in \Y_0\cap\Tors{F\T}$. We will show that $X\in\Fac(F\T)$. It
  follows from \th\ref{describe tors} that $X$ has a finite filtration
  $0=X_0\subset X_1\subset\cdots\subset X_t=X$ such that
  $X_{i+1}/X_i\in\Fac(F\T)$ for all $i$.  Clearly we can assume
  $t\geq2$.  Since $X\in\Y_0=\Sub DM$, we have $X_i\in\Y_0$ for all
  $i$.  Thus the exact sequence $0\to X_1\to X_2\to X_2/X_1\to0$
  satisfies all the assumptions in (i) and we have $X_2\in\Fac(F\T)$.
  By induction on the length of filtrations, we conclude.
\end{proof}

As an application of \th\ref{recover}, we prove the following
observations which will not be used in this article. Note that they
are known for functorially finite torsion classes, see
\cite[Thm. 2.33]{adachi_tau-tilting_2014}.

\begin{example}
  Let $A$ be a finite dimensional algebra. The following statements
  hold: \th\label{nothing between}
  \begin{enumerate}
  \item\label{it:nothing between-maximal torsion class} Let $e\in A$
    be a primitive idempotent.  There are no torsion classes in
    $\mod A$ between $\mod A$ and $\Fac((1-e)A)$.
  \item\label{it:nothing between-b} Let $M$ and $N$ be support
    $\tau$-tilting $A$-modules.  If $M$ and $N$ are mutation of each
    other, then there are no torsion classes in $\mod A$ between
    $\Fac M$ and $\Fac N$.
  \end{enumerate}
\end{example}
\begin{proof}
  \eqref{it:nothing between-maximal torsion class} Let $S:=\ttop(eA)$.
  Then $\Fac((1-e)A)$ consists of all $A$-modules $X$ such that
  $S\notin\add(\ttop X)$.  If $\T$ is a torsion class in $\mod A$
  satisfying $\mod A\supseteq\T\supsetneq\Fac((1-e)A)$, then there
  exists $X\in\T$ such that $S\in\add(\ttop X)$.  Since $\T$ is a
  closed under factor modules, we have $S\in\T$.  Since any $A$-module
  $Y$ has a submodule $Z$ such that $Z\in\Fac((1-e)A)$ and
  $Y/Z\in\add S$, we have $Y\in\T$. Thus $\mod A\subseteq \T$, which
  is what we needed to show.
  
  \eqref{it:nothing between-b} Let $\T$ be a torsion class such that
  $\Fac M\supseteq\T\supsetneq\Fac N$.  By
  \th\ref{recover}\eqref{it:recover-a}, we have
  \[
    \mod B=\Tors{F(\Fac M)}\supseteq\Tors{F\T}\supsetneq\Tors{F(\Fac
      N)}.
  \]
  Since $(1-e_k)B\in\add FN$ where $k$ is given by $N=\mu_k(M)$, we
  have $\Tors{F(\Fac N)}\supseteq\Fac((1-e_k)B)$.  Thus
  $\mod B\supseteq\Tors{F\T}\supsetneq\Fac((1-e_k)B)$. By
  \eqref{it:nothing between-maximal torsion class} applied to the
  finite dimensional algebra $B$ we have $\mod B=\Tors{F\T}$ and,
  again by \th\ref{recover}\eqref{it:recover-a}, we have that
  $\Fac M=\T$ as required.
\end{proof}

Now we have a criterion to decide when a mutation of support
$\tau$-tilting modules becomes larger in terms of the partial order of
$\sttilt A$.

\begin{proposition}
  \th\label{bigger or smaller} Let $A$ be a finite dimensional
  algebra, $M$ a support $\tau$-tilting $A$-module, and let
  $B:=\End_A(M)$, and $F\colon \T_0:=\Fac M\xto{\sim}\Y_0:=\Sub DM$
  the equivalence given in \th\ref{preparation1}\eqref{it:prep1-a}.
  Let $M_k$ an indecomposable summand of $M$.
  \begin{enumerate}
  \item\label{it:muk-a-37} We have $M>\mu_k(M)$ if and only if
    $S_k\in \Y_0$, where $S_k$ is the simple $B$-module corresponding
    to the summand $M_k$ of $M$.
  \item\label{it:muk-b-37} Let $\T\in\tors A$ with $\T\subset\T_0$ and
    assume $S_k\in \Y_0\setminus F\T$. Then
    $\T_0\supsetneq \Fac\mu_k(M)\supset\T$ holds.
  \end{enumerate}
\end{proposition}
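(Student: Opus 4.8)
The plan is to deduce both parts from the observation that, writing $\overline M:=M/M_k$, each of the conditions $S_k\in\Y_0$ and $M>\mu_k(M)$ is equivalent to the module-theoretic statement $M_k\notin\Fac\overline M$; once (a) is established, (b) follows by transporting torsion classes through the equivalence $F$ and invoking \th\ref{recover} and \th\ref{nothing between}.

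For the first equivalence in (a), I would apply Brenner--Butler's tilting theorem (\cite[\S VI.3]{assem_elements_2006}) to the tilting $(A/\ann M)$-module $M$: it exhibits $\Y_0=\Sub DM$ as the torsion-free class of a torsion pair in $\mod B$ whose torsion class is $\X_0=\setP{Y\in\mod B}{Y\otimes_B M=0}$. Since a simple $B$-module lies in exactly one of $\X_0,\Y_0$, we get $S_k\in\Y_0$ if and only if $S_k\otimes_B M\neq0$. A direct computation identifies $S_k\otimes_B M$ with the quotient of $M_k$ by the trace of $\rad_A(M,M_k)$ in $M_k$, and, $\End_A(M_k)$ being local, a short Nakayama-type argument shows this quotient vanishes precisely when $M_k\in\Fac\overline M$. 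For the second equivalence, note that $M$ and $\mu_k(M)$ are joined by an arrow of $Q(\sttilt A)$ and are therefore comparable, so it suffices to prove $M<\mu_k(M)\iff M_k\in\Fac\overline M$. If $M_k\in\Fac\overline M$ then $\Fac M=\Fac\overline M$, which is contained in $\Fac\mu_k(M)$ because $\overline M\in\add\mu_k(M)$ and $\Fac\mu_k(M)$ is closed under quotients; since $M\mapsto\Fac M$ is injective (\th\ref{tau-tilting-theory}\eqref{it:sttiltA-torsA}) and $M\neq\mu_k(M)$, this forces $M<\mu_k(M)$. Conversely, if $M<\mu_k(M)$, then $M$ is the mutation of $\mu_k(M)$ at its summand not shared with $M$, and the associated exchange sequence of \cite{adachi_tau-tilting_2014} presents $M_k$ as a quotient of a module in $\add\overline M$, so $M_k\in\Fac\overline M$.

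For (b), assume $S_k\in\Y_0\setminus F\T$ with $\T\in\tors A$ and $\T\subset\T_0$. By (a) we have $M>\mu_k(M)$, hence $\Fac\mu_k(M)\subsetneq\Fac M=\T_0$, which is the first asserted inclusion. The crux is the identity $\Tors{F(\Fac\mu_k(M))}=\Fac((1-e_k)B)$ in $\mod B$: on the one hand $(1-e_k)B=\bigoplus_{j\neq k}FM_j\in F(\Fac\mu_k(M))$, since the $M_j$ with $j\neq k$ are summands of $\mu_k(M)$, so $\Fac((1-e_k)B)\subseteq\Tors{F(\Fac\mu_k(M))}$; on the other hand $\Tors{F(\Fac\mu_k(M))}\neq\mod B$, because $\Fac\mu_k(M)\neq\T_0$ while $\T'\mapsto\Tors{F\T'}$ is injective on $\setP{\T'\in\tors A}{\T'\subset\T_0}$ by \th\ref{recover}\eqref{it:recover-a} and $\Tors{F\T_0}=\Tors{\Sub DM}=\mod B$; since by \th\ref{nothing between}\eqref{it:nothing between-maximal torsion class} (applied to $B$ and $e_k$) any torsion class strictly containing $\Fac((1-e_k)B)$ equals $\mod B$, the identity follows. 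Now \th\ref{recover}\eqref{it:recover-b} gives $F\T=\Y_0\cap\Tors{F\T}$, so $S_k\notin\Y_0\cap\Tors{F\T}$; as $S_k\in\Y_0$, we get $S_k\notin\Tors{F\T}$. Since $\Tors{F\T}$ is a torsion class not containing $S_k$, and (as in the proof of \th\ref{nothing between}\eqref{it:nothing between-maximal torsion class}) $\Fac((1-e_k)B)$ consists exactly of the $B$-modules $X$ with $S_k\notin\add(\ttop X)$ while every $X$ with $S_k$ in its top surjects onto $S_k$, we obtain $\Tors{F\T}\subseteq\Fac((1-e_k)B)=\Tors{F(\Fac\mu_k(M))}$. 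Intersecting with $\Y_0$ and applying \th\ref{recover}\eqref{it:recover-b} to both $\T$ and $\Fac\mu_k(M)$ yields $F\T\subseteq F(\Fac\mu_k(M))$; finally, since $F$ restricts to an equivalence on $\T_0$ (\th\ref{preparation1}\eqref{it:prep1-a}) and both $\T$ and $\Fac\mu_k(M)$ are contained in $\T_0$, we conclude $\T\subseteq\Fac\mu_k(M)$, the second asserted inclusion.

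The step I expect to be the real obstacle is the first half of the argument for (a): turning the two a priori unrelated conditions $S_k\in\Y_0$ and $M>\mu_k(M)$ into the single concrete condition $M_k\notin\Fac(M/M_k)$ requires both Brenner--Butler theory (to compute $S_k\otimes_B M$, together with a Nakayama argument) and the explicit exchange sequences governing mutation of support $\tau$-tilting modules. Once (a) is available, (b) is essentially bookkeeping with torsion classes through $F$; the only delicate points there are keeping straight which subcategories are torsion classes in $\mod A$ versus in $\mod B$, and invoking the coatom property of $\Fac((1-e_k)B)$ at precisely the right moment.
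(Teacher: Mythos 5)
Your proof is correct, and for part (a) it takes a genuinely different route from the paper. The paper proves (a) by applying $F$ to the approximation sequence $M'\xto{f}M_k\to C_k\to0$: for one direction it observes that every composition factor of $FC_k$ is $S_k$ and that $\Y_0=\Sub DM$ is closed under submodules, and for the converse it lifts the projective cover $FM_k\twoheadrightarrow S_k$ through $F$ to see that $f$ cannot be onto; the equivalence $M>\mu_k(M)\Leftrightarrow M_k\notin\Fac(M/M_k)\Leftrightarrow C_k\neq0$ is simply cited from \cite[Thm.\ 2.30]{adachi_tau-tilting_2014}. You instead use the Brenner--Butler torsion pair $(\X_0,\Y_0)$ in $\mod B$ with $\X_0=\setP{Y}{Y\otimes_BM=0}$, the fact that a simple is torsion or torsion-free but not both, a computation of $S_k\otimes_BM$ as the quotient of $M_k$ by the trace of $\rad\Hom_A(M,M_k)$, and a Nakayama-type iteration (using the nilpotence of $\rad\End_A(M_k)$ and that radical maps $M_j\to M_k$ with $j\ne k$ factor through the approximation) to equate the vanishing of this quotient with $M_k\in\Fac(M/M_k)$; you then re-derive the cited equivalence from \cite{adachi_tau-tilting_2014} via exchange sequences. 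Both are sound; the paper's argument is shorter and more self-contained, requiring only what is already set up in \th\ref{preparation1}, while yours isolates the torsion/torsion-free dichotomy of simple $B$-modules as the organizing principle, which is perhaps more transparent conceptually. For part (b) your argument is essentially the paper's: the paper directly shows $F\T\subset\Fac((1-e_k)B)$ (using \th\ref{preparation1}\eqref{it:prep1-c} together with $S_k\in\Y_0$ to lift a putative quotient $Y\twoheadrightarrow S_k$), whereas you pass through $\Tors{F\T}$ and only recover $F\T$ after intersecting with $\Y_0$ via \th\ref{recover}\eqref{it:recover-b}; you also verify the full equality $\Tors{F(\Fac\mu_k(M))}=\Fac((1-e_k)B)$, which is true but not needed, since the inclusion $\Fac((1-e_k)B)\subseteq\Tors{F(\Fac\mu_k(M))}$ already suffices.
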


\begin{proof}
  \eqref{it:muk-a-37} Let
  \begin{equation}
    \label{eq:right approximation}
    \begin{tikzcd}
      M'\rar{f}&M_k\rar{g}&C_k\rar&0
    \end{tikzcd}
  \end{equation}
  be an exact sequence with a right $(\add(M/M_k))$-approximation $f$
  of $M_k$.  Then, $M>\mu_k(M)$ if and only if $M_k\notin\Fac(M/M_k)$
  if and only if $C_k\neq0$ (see
  \cite[Thm. 2.30]{adachi_tau-tilting_2014}).

  (i) First we show that $C_k\neq0$ implies $S_k\in \Y_0$.  By
  applying $F$ to \eqref{eq:right approximation}, we obtain a complex
  \[
    \begin{tikzcd}
      FM'\rar{Ff}&FM_k\rar{Fg}&FC_k\rar&0
    \end{tikzcd}
  \]
  where $Fg$ is surjective since we have $\Ext^1_A(M,\Im f)=0$ by
  \th\ref{tau-tilting-theory}\eqref{it:sttiltA-torsA}.  In particular,
  the $B$-module $FC_k$ is a factor module of $\Coker Ff$. Since $Ff$
  is a right $(\add(B/FM_k))$-approximation of an indecomposable
  projective $B$-module $FM_k$, every composition factor of
  $\Coker Ff$ is isomorphic to $S_k$.  Thus every composition factor
  of $FC_k$ is isomorphic to $S_k$.  Since $FC_k\neq0$ and
  $\Y_0=\Sub DM$ is a torsion-free class in $\mod B$, we have
  $S_k\in \Y_0$.

  (ii) Now we show that $S_k\in \Y_0$ implies $C_k\neq0$.  Let
  $S_k=FX$ with $X\in\T_0$.  Then the natural surjection
  $FM_k=e_k B\to S_k=FX$ is of the form $Fh$ for some non-zero
  $h\in\Hom_A(M_k,X)$.  Since the composition $(Ff)(Fh):FM'\to FX=S_k$
  vanishes, we have $fh=0$.  Since $h$ is non-zero, $f$ is not
  surjective.  Thus $C_k\neq0$. This finishes the proof.

  \eqref{it:muk-b-37} Since $S_k\notin F\T$, we have
  $F\T\subset\Fac((1-e_k)B)$ by
  \th\ref{preparation1}\eqref{it:prep1-c}.  Thus we have
  \[
    F\T\subset\Fac((1-e_k)B)\subset\Tors{F(\Fac\mu_k(M))}.
  \]
  Finally, by \th\ref{recover}\eqref{it:recover-b}, we have
  \[
    F\T\subset \Y_0\cap \Tors{F(\Fac\mu_k(M))=F(\Fac\mu_k(M))}
  \]
  Thus $\T\subset\Fac\mu_k(M)$.
\end{proof}

We need the following observation.

\begin{lemma}
  \th\label{general-observation} Let $A$ be a finite dimensional
  algebra. Let $M=M_1\oplus\cdots\oplus M_n$ be a basic $A$-module
  with indecomposable direct summands $M_i$, and
  $M'_k\xto{f_k}M_k\to C_k\to0$ be an exact sequence with a right
  $(\add(M/M_k))$-approximation $f_k$ of $M_k$.  Then
  $M\in\Tors{C_1\oplus\cdots\oplus C_n}$.
\end{lemma}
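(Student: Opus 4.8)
Set $\T:=\Tors{C_1\oplus\cdots\oplus C_n}$ and let $(\T,\F)$ be the torsion pair whose torsion class is $\T$, with associated torsion radical $t$. Since $\T$ is closed under extensions (and contains $0$) it is closed under finite direct sums, so it suffices to prove that $M_k\in\T$ for every $k$. From the given exact sequence $M'_k\xto{f_k}M_k\to C_k\to0$ I first extract the short exact sequence
\[
  0\to K_k\to M_k\to C_k\to0,\qquad K_k:=\Im f_k ,
\]
and since $K_k$ is a quotient of $M'_k\in\add(M/M_k)$ we have $K_k\in\Fac(M/M_k)$, while $C_k\in\T$ by the very definition of $\T$.

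Now suppose, for a contradiction, that $M_k\notin\T$ for some $k$, and write $\overline{X}:=X/t(X)\in\F$ for the torsion-free quotient of a module $X$. I claim that for every $k$ one has $\overline{M_k}\cong\overline{K_k}$ and $\overline{M_k}\in\Fac\bigl(\bigoplus_{j\neq k}\overline{M_j}\bigr)$. For the isomorphism, observe that $M_k/(K_k+t(M_k))$ is simultaneously a quotient of $M_k/K_k=C_k\in\T$ and of $M_k/t(M_k)=\overline{M_k}\in\F$, hence lies in $\T\cap\F=0$; thus $K_k+t(M_k)=M_k$ and $\overline{M_k}\cong K_k/(K_k\cap t(M_k))=\overline{K_k}$. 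For the factorisation, pick a surjection $\bigoplus_{j\neq k}M_j^{\,a}\twoheadrightarrow K_k$ (possible as $K_k\in\Fac(M/M_k)$) and compose it with $K_k\twoheadrightarrow\overline{K_k}\cong\overline{M_k}$; the resulting surjection onto the torsion-free module $\overline{M_k}$ must kill the torsion submodule $\bigoplus_{j\neq k}t(M_j)^{\,a}$, and so induces a surjection $\bigoplus_{j\neq k}\overline{M_j}^{\,a}\twoheadrightarrow\overline{M_k}$.

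Consequently the module $\overline{M}:=\bigoplus_k\overline{M_k}$ is nonzero (because $\overline{M_k}\neq0$), and every indecomposable direct summand of $\overline M$ is a quotient of a direct sum of copies of the remaining summands. The remaining task, which I expect to be the main obstacle, is to deduce from this ``self-generation'' property that $\overline M=0$. My plan is a dimension count: one may first reduce to the case in which the summands of $\overline M$ are pairwise non-isomorphic (when two coincide, the number of distinct summands drops and one appeals to the inductive setup); then, choosing an indecomposable summand $L$ of $\overline M$ of maximal $K$-dimension, say $L\mid\overline{M_{k_0}}$, the module $L$ is a quotient of $\bigoplus_{j\neq k_0}\overline{M_j}^{\,a}$, and applying the projective-cover functor — which is right exact, and using Nakayama's lemma to upgrade the lift through $P(L)$ to a surjection — shows that $P(L)$ is a direct summand of $\bigoplus_{j\neq k_0}P(\overline{M_j})^{\,a}$; comparing with the indecomposable summands of the $\overline{M_j}$ and exploiting both the maximality of $\dim_K L$ and the torsion-freeness of $\overline M$ should force $L$ to be projective and then two distinct indecomposable summands $M_{k_0},M_{j_0}$ of $M$ to be isomorphic, contradicting that $M$ is basic. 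This contradiction yields $M_k\in\T$ for all $k$, hence $M\in\T$.

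The delicate point throughout is precisely this last module-theoretic statement: attempts to prove $M\in\T$ directly by induction on $n$, by peeling off one summand at a time, run into circularity, because the right-approximation cokernels of $M/M_i$ systematically reintroduce the removed summand $M_i$; passing to torsion-free quotients is what linearises the problem, but it still leaves the ``no nonzero self-generating module'' fact to be established by a careful dimension/projective-cover argument.
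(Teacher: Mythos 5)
The passage from your self-generation statement to the conclusion does not work, and the reason is instructive: by passing to torsion-free quotients and remembering only that $K_k\in\Fac(M/M_k)$, you have discarded the one piece of structure that makes the lemma true, namely that each $f_k$ is a \emph{right $\add(M/M_k)$-approximation} and hence a \emph{radical} morphism in $\add M$.

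Two concrete problems. First, your claim that $\overline{M_k}\cong\overline{K_k}$ is not justified as stated: you infer that $M_k/(K_k+t(M_k))\in\F$ because it is a quotient of $\overline{M_k}\in\F$, but torsion-free classes are closed under \emph{submodules}, not quotients. (What is true, via the six-term sequence $0\to t(K_k)\to t(M_k)\to C_k\to\overline{K_k}\to\overline{M_k}\to0$ coming from $C_k\in\T$, is that $\overline{K_k}$ \emph{surjects onto} $\overline{M_k}$; this weaker fact would suffice for your step 5, but your argument for the isomorphism is wrong.) Second, and fatally, the principle you are hoping to prove --- that torsion-free $\overline{M_1},\dots,\overline{M_n}$ with each $\overline{M_k}\in\Fac\bigl(\bigoplus_{j\neq k}\overline{M_j}\bigr)$ must vanish --- is simply false. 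Take $n=2$ and $\overline{M_1}\cong\overline{M_2}\cong S$ for a simple torsion-free module $S$: each is a factor of the other, yet neither is zero. Such coincidences $\overline{M_i}\cong\overline{M_j}$ genuinely arise (the $M_i$ are basic, but their torsion-free quotients need not be pairwise non-isomorphic or indecomposable), and your proposed reduction to non-isomorphic summands does not survive: once you identify $\overline{M_1}$ with $\overline{M_2}$ and drop one of them, the required surjection from the \emph{remaining} summands disappears. No dimension or projective-cover count can fix this, because the statement itself has counterexamples.

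The paper's proof keeps, rather than discards, the approximation structure. Since $M$ is basic and each $f_k:M'_k\to M_k$ is a right $\add(M/M_k)$-approximation of the indecomposable $M_k$, every component of $f_k$ is a non-isomorphism between indecomposables, so $f_k$ lies in $\rad(\add M)$. One then iterates: set $N_0:=M$ and let $N_{i+1}\xto{g_i}N_i\to E_i\to0$ be obtained by approximating each indecomposable summand of $N_i$ by the corresponding $f_k$; each $E_i\in\add(C_1\oplus\cdots\oplus C_n)$ and each $g_i$ is a radical morphism. Because $\End_A(M)$ is finite dimensional, $(\rad\End_A(M))^\ell=0$ for $\ell\gg0$, forcing $g_0g_1\cdots g_{\ell-1}=0$, and the images $\im(g_0\cdots g_{i-1})$ give a finite filtration of $M$ with subquotients in $\Fac(C_1\oplus\cdots\oplus C_n)$; hence $M\in\Filt\Fac(C_1\oplus\cdots\oplus C_n)=\Tors{C_1\oplus\cdots\oplus C_n}$ by \th\ref{describe tors}. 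The nilpotence of the radical is exactly the finiteness principle your argument is missing.
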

\begin{proof}
  We define $A$-modules $N_i\in\add M$ and $E_i$ inductively by
  $N_0:=M$ and an exact sequence
  \[
    N_{i+1}\xto{g_i}N_i\to E_i\to0
  \]
  where $N_i:=\bigoplus_{k=1}^n M_k^{\oplus a_{ik}}$ and
  $g_i=\bigoplus_{k=1}^nf_k^{\oplus a_{ik}}$ for some $a_{ik}>0$.  It
  readily follows that $E_i\in\add(C_1\oplus\cdots\oplus C_n)$.

  Take a sufficiently large $\ell$ such that
  $(\rad \End_A(M))^\ell=0$.  Then $g_1\cdots g_\ell=0$ holds, and
  hence $M$ is filtered by modules in
  $\Fac(C_1\oplus\cdots\oplus C_n)$.  Thus the assertion follows.
\end{proof}

We are ready to give the proof of \th\ref{main-torsion-classes}.

\begin{proof}[Proof \th\ref{main-torsion-classes}]
  We only prove part \eqref{it:main-torsion-classes-a}, since part
  \eqref{it:main-torsion-classes-b} is analogous.

  {\it Case 1:} Assume that there exists a simple $B$-module $S_k$ in
  $F(\Fac M)\setminus F\T$. Then
  $\Fac M\supsetneq \Fac\mu_k(M)\supset\T$ holds by \th\ref{bigger or
    smaller}\eqref{it:muk-b-37}, and the assertion follows.

  {\it Case 2:} Assume that all simple $B$-modules in $F(\Fac M)$ are
  contained in $F\T$.  For each indecomposable summand $M_k$ of $M$ we
  take an exact sequence
  \[
    M'_k\xto{f_k}M_k\to C_k\to0
  \]
  with a right $(\add(M/M_k))$-approximation $f_k$ of $M_k$.

  We show that $FC_k\in\Tors{F\T}$ holds for all $k$.  We can assume
  $C_k\neq0$. Any composition factor of the $B$-module $FC_k$ is
  $S_k$ as we discussed in the proof of \th\ref{bigger or smaller}.
  Since $F(\Fac M)=\Sub DM$ is a torsion-free class, we have
  $S_k\in F(\Fac M)$.  By our assumption $S_k\in F\T$, which implies
  $FC_k\in\Tors{F\T}$.

  Since $FC_k\in F(\Fac M)\cap\Tors{F\T}=F\T$ by
  \th\ref{recover}\eqref{it:recover-b}, we have $C_k\in\T$.  By
  \th\ref{general-observation}, we have $M\in\T$, a contradiction to
  $\Fac M\supsetneq\T$.
\end{proof}

\subsection{$\tau$-tilting finite algebras}

Let $A$ be a finite dimensional
algebra. \th\ref{tau-tilting-theory}\eqref{it:sttiltA-torsA} shows
that support $\tau$-tilting $A$-modules parametrize the torsion
classes in $\mod A$ which are functorially finite. It is then natural
to characterize those algebras for which all torsion classes are
functorially finite. The following is the main result of this
subsection; it shows that these algebras are precisely the
$\tau$-tilting finite algebras, see \th\ref{tau-rigid_finite_algebra}.

\begin{theorem}
  \th\label{all torsion classes are functorially finite} Let $A$ be a
  finite dimensional algebra. Then, the following conditions are
  equivalent:
  \begin{enumerate}
  \item\label{it:all tc are ff-a} The algebra $A$ is $\tau$-tilting
    finite.
  \item\label{it:all tc are ff-b} Every torsion class in $\mod A$ is
    functorially finite.
  \item\label{it:all tc are ff-c} Every torsion-free class in $\mod A$
    is functorially finite.
  \end{enumerate}
  In this case, the sets $\tors A$ and $\torsf A$ are finite.
\end{theorem}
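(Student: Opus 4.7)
The plan is to prove the equivalences (a) $\Leftrightarrow$ (b) $\Leftrightarrow$ (c) by combining the main tool \th\ref{main-torsion-classes} with a König-type compactness argument. Once any of these equivalences is established, the final assertion is immediate: under (b), $\tors A = \ftors A$, which is finite by \th\ref{char-tau-rigid-finite}, and dually for $\torsf A$ under (c).

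For (a) $\Rightarrow$ (b), I would take an arbitrary $\T \in \tors A$ and iteratively apply \th\ref{main-torsion-classes}\eqref{it:main-torsion-classes-a} starting from $M_0 := A$. As long as $\Fac M_i \supsetneq \T$, the theorem produces a mutation $M_{i+1}$ of $M_i$ with $\Fac M_i \supsetneq \Fac M_{i+1} \supseteq \T$. This yields a strictly descending chain in $\ftors A$, which must terminate since $\ftors A$ is finite by hypothesis (via \th\ref{char-tau-rigid-finite}). The procedure can only stop when $\Fac M_i = \T$, establishing that $\T$ is functorially finite.

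For (b) $\Rightarrow$ (a) I argue by contrapositive. If $A$ is not $\tau$-tilting finite, then $\sttilt A$ is infinite, whereas the Hasse quiver $Q(\sttilt A)$ is $|A|$-regular by \th\ref{QsttiltA} and admits $A$ as its unique source. A König's lemma argument applied to the subquiver of downward arrows rooted at $A$ produces an infinite strictly descending chain $A = M_0 > M_1 > M_2 > \cdots$, and I set $\T := \bigcap_{i \geq 0} \Fac M_i$, which is a torsion class contained strictly in every $\Fac M_i$ (since otherwise $\Fac M_{i+1} \subsetneq \Fac M_i = \T$ would contradict $\T \subseteq \Fac M_{i+1}$). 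Under hypothesis (b) one has $\T = \Fac N$ for some $N \in \sttilt A$, so \th\ref{main-torsion-classes}\eqref{it:main-torsion-classes-b} applied to the pair $(N, \Fac M_i)$ yields, for each $i$, a mutation $L_i$ of $N$ with $\Fac N \subsetneq \Fac L_i \subseteq \Fac M_i$. Since $N$ admits only $|A|$ mutations by \th\ref{tau-tilting-theory}\eqref{it:mutation-in-sttiltA}, the pigeonhole principle forces some fixed $L$ to equal $L_i$ for infinitely many $i$; hence $\Fac L \subseteq \bigcap_i \Fac M_i = \Fac N$, contradicting $\Fac N \subsetneq \Fac L$.

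The equivalence (a) $\Leftrightarrow$ (c) is obtained by duality: the $K$-linear duality $D = \Hom_K(-,K)$ induces a bijection between torsion-free classes in $\mod A$ and torsion classes in $\mod A^{\op}$ that preserves functorial finiteness, and $\tau$-tilting finiteness is itself invariant under the passage to the opposite algebra, so (c) for $A$ is simply (b) for $A^{\op}$. The main obstacle is the direction (b) $\Rightarrow$ (a): whereas the descending chain can be produced by a general König's-lemma argument, converting the resulting intersection $\T$ into a contradiction requires the delicate pairing of \th\ref{main-torsion-classes}\eqref{it:main-torsion-classes-b} with the finiteness bound on the number of mutations of $N$ given by \th\ref{tau-tilting-theory}\eqref{it:mutation-in-sttiltA}.
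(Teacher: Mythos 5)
Your argument is correct and follows essentially the same strategy as the paper: both directions are driven by \th\ref{main-torsion-classes} combined with a chain/compactness argument (your pigeonhole on the mutations of $N$ is exactly the paper's observation that the nested nonempty finite sets $I_i=\setP{k}{M_i\geq\mu_k(N)>N}$ have nonempty intersection), and your final assertion argument matches what the paper intends. The only stylistic difference is for condition (c): you route it through $A^{\op}$, whereas the paper gets $\eqref{it:all tc are ff-b}\Leftrightarrow\eqref{it:all tc are ff-c}$ directly from \th\ref{characterization-of-ftors}, which is slightly shorter; both are valid.
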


Before giving the proof of \th\ref{all torsion classes are
  functorially finite} we prove the following natural characterization
of $\tau$-tilting finite algebras.

\begin{proposition}
  \th\label{tau-rigid finite} Let $A$ be a finite dimensional
  algebra. Then, the following conditions are equivalent:
  \begin{enumerate}
  \item\label{it:tau-rigid finite-a} The algebra $A$ is $\tau$-tilting
    finite.
  \item\label{it:tau-rigid finite-b} The set $\sttilt A$ is finite.
  \item\label{it:tau-rigid finite-c} There exists an upper bound on
    the length of the paths in $\Q(\sttilt A)$.
  \item\label{it:tau-rigid finite-d} There does not exist an infinite
    path starting at $A$ in $\Q(\sttilt A)$.
  \item\label{it:tau-rigid finite-e} There does not exist an infinite
    path ending at $0$ in $\Q(\sttilt A)$.
  \end{enumerate}
\end{proposition}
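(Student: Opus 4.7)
The plan is to close the cycle $(a)\Leftrightarrow(b)\Rightarrow(c)\Rightarrow(d),(e)$ and then prove $(d)\Rightarrow(b)$ and $(e)\Rightarrow(b)$ separately, using \th\ref{main-torsion-classes} as the key input. The equivalence $(a)\Leftrightarrow(b)$ is already observed after \th\ref{tau-rigid_finite_algebra}: Bongartz completion (\th\ref{tau-tilting-theory}\eqref{it:Bongartz-completion}) shows that $\tau$-tilting finiteness is equivalent to having only finitely many indecomposable $\tau$-rigid $A$-modules, and hence to the finiteness of $\sttilt A$.

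For $(b)\Rightarrow(c)$, I would observe that $Q(\sttilt A)$ is the Hasse quiver of a partial order, so every directed path is strictly monotonic and therefore of length at most $|\sttilt A|-1$. The implications $(c)\Rightarrow(d)$ and $(c)\Rightarrow(e)$ are immediate.

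The main step is $(d)\Rightarrow(b)$. First, note that $A$ is the maximum element of $\sttilt A$ because $\Fac A=\mod A$. Given $M\in\sttilt A$, I would construct inductively a sequence $A=M_0,M_1,M_2,\dots$ in $\sttilt A$ as follows: as long as $M_i>M$, we have $\Fac M_i\supsetneq\Fac M$, so \th\ref{main-torsion-classes}\eqref{it:main-torsion-classes-a} applied with $\T:=\Fac M$ produces a mutation $M_{i+1}$ of $M_i$ satisfying $\Fac M_i\supsetneq\Fac M_{i+1}\supset\Fac M$. Since mutations correspond to arrows of $Q(\sttilt A)$ (see \th\ref{QsttiltA} and \th\ref{nothing between}\eqref{it:nothing between-b}), each step $M_i\to M_{i+1}$ is an arrow of $Q(\sttilt A)$. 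Assumption $(d)$ prevents the sequence from being infinite, so it terminates at some $M_k$ with $M_k\not>M$; combined with $\Fac M_k\supset \Fac M$, this forces $M_k=M$. Thus every element of $\sttilt A$ is the endpoint of a finite directed path in $Q(\sttilt A)$ starting at $A$. Since the underlying graph of $Q(\sttilt A)$ is $|A|$-regular by the last paragraph of \th\ref{QsttiltA}, each vertex has out-degree at most $|A|$, so K\"onig's lemma applied to the tree of finite directed paths emanating from $A$ shows that this tree is finite, whence $\sttilt A$ is finite.

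The implication $(e)\Rightarrow(b)$ is strictly dual: $0$ is the minimum of $\sttilt A$ because $\Fac 0=0$, and the symmetric argument, based on \th\ref{main-torsion-classes}\eqref{it:main-torsion-classes-b}, constructs from any $M\in\sttilt A$ a finite directed path in $Q(\sttilt A)$ starting at $M$ and ending at $0$; K\"onig's lemma on the tree of finite paths ending at $0$ then yields the finiteness of $\sttilt A$. The essential content of the proof is the repeated application of \th\ref{main-torsion-classes} that keeps each mutation compatible with the target torsion class $\Fac M$; the remainder reduces to standard combinatorics of locally finite quivers together with K\"onig's lemma.
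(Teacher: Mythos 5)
Your argument is correct and reaches the same conclusion, but the crucial step $\eqref{it:tau-rigid finite-d}\Rightarrow\eqref{it:tau-rigid finite-b}$ is organized a bit differently from the paper's. The paper first invokes \cite[Cor.\ 2.38]{adachi_tau-tilting_2014} (a finite connected component of $Q(\sttilt A)$ forces $Q(\sttilt A)$ to be connected), then uses the $|A|$-regularity of $Q(\sttilt A)$ together with the contrapositive of K\"onig's lemma to conclude that $\ell(A)$ is finite, and bounds $|\sttilt A|$ by $1+n+\cdots+n^{\ell(A)}$. You instead prove directly, via repeated application of \th\ref{main-torsion-classes}\eqref{it:main-torsion-classes-a} together with the fact that mutations yield arrows in $Q(\sttilt A)$, that every $M\in\sttilt A$ is the endpoint of a finite directed path from $A$, and only then apply K\"onig's lemma to the (finitely branching, well-founded) tree of such paths. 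This makes explicit a reachability fact that in the paper is absorbed into the appeal to \cite[Cor.\ 2.38]{adachi_tau-tilting_2014}, so your route is more self-contained and, in a sense, more transparent: it uses only \th\ref{main-torsion-classes} and \th\ref{QsttiltA}, both established earlier in the paper, rather than an external corollary. The price is a slightly longer argument. Both are sound; the dual argument for $\eqref{it:tau-rigid finite-e}\Rightarrow\eqref{it:tau-rigid finite-b}$ is identical in spirit in either formulation.
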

\begin{proof}

  Firstly, the equivalence between \eqref{it:tau-rigid finite-a} and
  \eqref{it:tau-rigid finite-b} follows from
  \th\ref{char-tau-rigid-finite}.

  Secondly, the implications \eqref{it:tau-rigid
    finite-b}$\Rightarrow$\eqref{it:tau-rigid
    finite-c}$\Rightarrow$\eqref{it:tau-rigid
    finite-d},\eqref{it:tau-rigid finite-e} are obvious. We only show
  that \eqref{it:tau-rigid finite-d}$\Rightarrow$\eqref{it:tau-rigid
    finite-b}; the remaining implication \eqref{it:tau-rigid
    finite-e}$\Rightarrow$\eqref{it:tau-rigid finite-b} can be proven
  in an analogous manner.

  \eqref{it:tau-rigid finite-d}$\Rightarrow$\eqref{it:tau-rigid
    finite-b}
  Let $n:=|A|$. For a support $\tau$-tilting $A$-module $X$, let
  $\ell(X)$ be the supremum of the
  length of the paths starting at $X$ in $\Q(\sttilt A)$.
  
  We claim that $\ell(A)$ is finite.
  Clearly $\ell(X)=\max_Y\ell(Y)+1$ holds, where $Y$ ranges over all
  direct successors of $X$.  Note that $Q(\sttilt A)$ is $n$-regular
  (see \th\ref{QsttiltA}). Thus, $\ell(X)=\infty$ implies $\ell(Y)=\infty$
  for at least one direct successor $Y$ of $X$. Repeating the same
  argument, $\ell(X)=\infty$ implies that there exists an infinite
  path starting at $X$ in $\Q(\sttilt A)$.  Therefore, $\ell(A)$ must be
  finite.

  For any $X\in\sttilt A$, we apply \cite[Thm. 2.35(a)]{adachi_tau-tilting_2014}
  (or \th\ref{main-torsion-classes}(a)) repeatedly to $A\ge X$
  to get a path $A=X_0\to X_1\to\cdots\to X_m=X$ in $\Q(\sttilt A)$ with $m\le\ell(A)$.
  Since $\Q(\sttilt A)$ is $n$-regular, it follows
  immediately that \begin{align*}&|\sttilt A|\leq 1+n+n^2+\cdots+n^{\ell(A)}. \qedhere\end{align*}
\end{proof}

To prove \th\ref{all torsion classes are functorially
  finite}, we prepare the following easy observation.
  
  \begin{lemma}
  \th\label{increasing chain}
  Let $A$ be a finite dimensional algebra.
  \begin{enumerate}
  \item If $\T_0\subsetneq\T_1\subsetneq\T_2\subsetneq\cdots$ is a strictly
  increasing chain of torsion classes in $\mod A$, then
  $\T=\bigcup_{i\ge0}\T_i$ is a torsion class which is not functorially finite.
  \item If $\T_0\supsetneq\T_1\supsetneq\T_2\supsetneq\cdots$ is a strictly
  decreasing chain of torsion classes in $\mod A$, then
  $\T=\bigcap_{i\ge0}\T_i$ is a torsion class which is not functorially finite.
  \end{enumerate}
  \end{lemma}

  \begin{proof}
  (a) Clearly $\T$ is a torsion class. Assume that $\T$ is functorially finite.
  Then $\T=\Fac M$ holds for some $M\in\sttilt A$. Since $M\in\T$,
  there exists $i\ge0$ such that $M\in\T_i$. Thus $\T=\Fac M\subset\T_i$ holds,
  contradicting to $\T_i\subsetneq\T_{i+1}$.

  (b) Let $(\T_i,\F_i)$ be a torsion pair and $\F=\bigcup_{i\ge0}\F_i$. 
  By the same argument as in (a),  $\F$ is a torsion-free class which is
  not functorially finite. Since $\T={}^\perp\F$ clearly holds, $(\T,\F)$ is a torsion pair.
  By \th\ref{characterization-of-ftors}, $\T$ is not functorially finite.
  \end{proof}

We now give the proof of \th\ref{all torsion classes are functorially
  finite}.

\begin{proof}[Proof of \th\ref{all torsion classes are functorially finite}]
  Firstly, note that the equivalence between \eqref{it:all tc are
    ff-b} and \eqref{it:all tc are ff-c} follows immediately from
  \th\ref{characterization-of-ftors}.

  \eqref{it:all tc are ff-a}$\Rightarrow$\eqref{it:all tc are ff-b}
  Suppose that there exists a torsion class $\T$ in $\mod A$ which is
  not functorially finite.  Applying \th\ref{main-torsion-classes}
  repeatedly, we obtain an infinite sequence
  \[
    \mod A\supsetneq\Fac N_1 \supsetneq\Fac
    N_2\supsetneq\cdots\supset\T.
  \]
  For all $i$ we have $\Fac N_i\supsetneq\T$ since $\Fac N_i$ is
  functorially finite.  This contradicts the fact that $\sttilt A$ is
  a finite set. Therefore all torsion classes in $\mod A$ are
  functorially finite.

  \eqref{it:all tc are ff-b}$\Rightarrow$\eqref{it:all tc are ff-a}
  Suppose that $A$ is not a $\tau$-tilting finite algebra.  Then, by
  \th\ref{tau-rigid finite}, there exists an infinite path
  $0=M_0<M_1<M_2<\cdots$ ending at $0$  in $\Q(\sttilt A)$. 
  By \th\ref{increasing chain},
  $\bigcup_{i\ge0}\Fac M_i$ is a torsion class which
  is not functorially finite,
  a contradiction. This finishes the proof of the theorem.
\end{proof}

\section{Brick--$\tau$-rigid correspondence}
\label{sec:brick-tau-rigid_correspondence}

Let $A$ be a finite dimensional algebra. We denote the set of
isomorphism classes of indecomposable $\tau$-rigid $A$-modules by
$\itrigid A$. Recall that $X\in\mod A$ is called a \emph{brick} if
$\End_A(X)$ is a division algebra. We denote the set of isomorphism
classes of bricks in $\mod A$ by $\brick A$. We denote the set of
isomorphism classes of bricks $S$ of $A$ such that
the smallest torsion class $\TTT(S)$ containing $S$ is
functorially finite by $\fbrick A$.  The main result of this section
is the following bijective correspondence.

\begin{theorem}
  \th\label{brick-rigid} Let $A$ be a finite dimensional algebra.
  Then there exists a bijection
  \[\itrigid A\to\fbrick A\]
  given by $X\mapsto X/\rad_BX$ for $B:=\End_A(X)$.
\end{theorem}

As an application of \th\ref{brick-rigid}, we prove the following
characterization of $\tau$-tilting finite algebras, which
complements~\th\ref{all torsion classes are functorially finite}.

\begin{theorem}
  \th\label{brick-tau-rigid-wide-finite} Let $A$ be a finite
  dimensional algebra. Then, the following conditions are equivalent.
  \begin{enumerate}
  \item\label{it:b-ttilt-finite} The algebra $A$ is $\tau$-tilting
    finite.
  \item\label{it:b-brick} The set $\brick A$ is finite.
  \item\label{it:b-fbrick} The set $\fbrick A$ is finite.
  \end{enumerate}
  Moreover, in this case there exists a bijection
  \[
    \itrigid A\to\brick A
  \]
  given by $X\mapsto X/\rad_B X$ where $B=\End_A(X)$.
\end{theorem}

For readability purposes, we divide the proof of \th\ref{brick-rigid}
into a couple of lemmas.

\begin{lemma}\th\label{well-defined}
  Let $X$ be an indecomposable $\tau$-rigid $A$-module and
  $B:=\End_A(X)$. Then $S:=X/\rad_BX$ is a brick satisfying
  $\TTT(S)=\Fac X$.
\end{lemma}
\begin{proof}
  Note that $\rad_BX$ is an $A$-module which belongs to $\Fac X$ since
  $\rad_BX=\sum_{f\in\rad B}f(X)$.

  (i) We show that $S$ is a brick. It suffices to show that any
  non-zero morphism $f:S\to S$ is an isomorphism.  Let
  $0\to \rad_BX\xrightarrow{i} X\xrightarrow{p} S\to0$ be an exact
  sequence.  Since $\rad_BX\in\Fac X$ implies $\Ext^1_A(X,\rad_BX)=0$,
  we have the following commutative diagram.
  \[
    \begin{tikzcd}
      0\rar&\rad_B X\rar{i}\dar{h}&X\rar{p}\dar{g}&S\dar{f}\rar&0\\
      0\rar&\rad_B X\rar{i}&X\rar{p}&S\rar&0
    \end{tikzcd}
  \]
  If $g$ is not an isomorphism, then $g(X)\subset\rad_BX$ holds. Thus
  $f=0$, a contradiction.  Thus $g$ is an isomorphism, and hence $h$
  and $f$ are also isomorphisms.

  (ii) We show $\TTT(S)=\Fac X$. It suffices to show that $X$ belongs
  to $\TTT(S)$.  Take $f_1,\ldots,f_n\in B$ satisfying
  $\rad B=\sum_{i=1}^nf_iB$.  Then
  $f=(f_1,\ldots,f_n):X^{\oplus n}\to X$ satisfies
  $f((\rad^i_BX)^{\oplus n})=\rad^{i+1}_BX$.  Therefore
  $\rad^i_BX/\rad^{i+1}_BX$ belongs to $\Fac(\rad^{i-1}_BX/\rad^i_BX)$
  for any $i>0$.  Thus it belongs to $\Fac S$ inductively, and hence
  $X$ belongs to $\Filt(\Fac S)=\TTT(S)$.
\end{proof}

\begin{lemma}
  \th\label{simpleness} Let $S$ be a brick of $A$. Then, the following
  statements hold.
  \begin{enumerate}
  \item\label{a:lemma66} Every morphism $f:X\to S$ in $\TTT(S)$ is
    either zero or surjective.
  \item\label{b:lemma66} If $T$ is another brick of $A$ satisfying
    $\TTT(S)=\TTT(T)$, then $S\simeq T$.
  \end{enumerate}
\end{lemma}
\begin{proof}
  \eqref{a:lemma66} We show that $f\neq0$ implies that $f$ is
  surjective. Since $X\in\TTT(S)=\Filt(\Fac S)$, there exists a
  filtration $0=X_0\subset X_1\subset\cdots\subset X_t=X$ satisfying
  $X_{i+1}/X_i\in\Fac S$ for any $i$.  We can assume $f(X_1)\neq0$ by
  taking a maximal number $i$ satisfying $f(X_i)=0$ and replacing $X$
  by $X/X_i$.  Since $X_1\in\Fac S$, there exists an epimorphism
  $g:S^{\oplus n}\to X_1$.  Since $fg:S^{\oplus n}\to S$ is non-zero
  and $S$ is a brick, $fg$ must be a split epimorphism.  Thus $f$ is
  surjective.

  \eqref{b:lemma66} Since $S$ belongs to $\TTT(T)$, there exists a
  non-zero morphism $f:T\to S$. This is surjective by (a), and
  therefore $\dim_kT\geq\dim_kS$.  The same argument shows the
  opposite inequality, and therefore $f$ is an isomorphism.
\end{proof}

We are ready to prove Theorem \ref{brick-rigid}.

\begin{proof}[Proof of Theorem \ref{brick-rigid}]
  By Lemma \ref{well-defined}, we have a map $\itrigid A\to\fbrick A$
  given by $X\mapsto X/\rad_BX$ for $B:=\End_A(X)$.

  (i) We show that this is injective.  Let $X$ and $Y$ be
  indecomposable $\tau$-rigid $A$-modules and $S$ and $T$ be the
  corresponding bricks.  If $S\simeq T$, then
  $\Fac X=\TTT(S)=\TTT(T)=\Fac Y$. This implies $X\simeq Y$.  In fact,
  otherwise we get a contradiction by applying
  \th\ref{general-observation} to $M:=X\oplus Y$.

  (ii) We show that this is surjective.  Let $S$ be a brick of $A$
  such that $\TTT(S)$ is functorially finite.  Then there exists a
  support $\tau$-tilting $A$-module $M$ satisfying $\Fac M=\TTT(S)$.
  Since $S\in\Fac M$, there exists an indecomposable direct summand
  $X$ of $M$ and a non-zero morphism $f:X\to S$. By Lemma
  \ref{simpleness}, $f$ must be surjective. Thus
  $\TTT(S)\subset\Fac X\subset\Fac M=\TTT(S)$ holds, and hence
  $\TTT(S)=\Fac X$.  Now let $B:=\End_A(X)$ and $T:=X/\rad_BX$. Then
  \[
    \TTT(T)=\Fac X=\TTT(S)
  \]
  holds by \th\ref{well-defined}. By
  \th\ref{simpleness}\eqref{b:lemma66}, we have $S\simeq
  T=X/\rad_BX$. Thus the assertion follows.
\end{proof}

Now we give a proof of \th\ref{brick-tau-rigid-wide-finite}.

\begin{proof}[Proof of \th\ref{brick-tau-rigid-wide-finite}]
  \eqref{it:b-ttilt-finite}$\Rightarrow$\eqref{it:b-brick} Since the
  map $\TTT:\brick A\to\tors A$ is injective by
  \th\ref{simpleness}\eqref{b:lemma66} and the set $\tors A$ is finite
  by \th\ref{all torsion classes are functorially finite}, the set
  $\brick A$ is also finite.

  \eqref{it:b-brick}$\Rightarrow$\eqref{it:b-fbrick} This implication
  is obvious.

  \eqref{it:b-fbrick}$\Rightarrow$\eqref{it:b-ttilt-finite} This
  implication follows directly from \th\ref{brick-rigid}.

  The latter assertion follows from Theorem \ref{brick-rigid} since
  $\tau$-tilting finiteness implies $\tors A=\ftors A$ and hence
  $\brick A=\fbrick A$.
\end{proof}

\section{The simplicial complex associated to $\tau$-tilting modules}
\label{sec:simplicial-complex}

Let $\T$ be a $K$-linear, $\Hom$-finite, Krull-Schmidt triangulated
category $\T$ with a basic silting object $S$ and set $A:=\End_\T(S)$.
In this section we construct a simplicial complex whose maximal
simplices are in bijection with $\twosilt{S}{\T}$ and study its
combinatorial properties.

Let us recall some basic terminology.  Let $\Delta^0$ be a set.  An
\emph{abstract simplicial complex on $\Delta^0$} is a collection
$\Delta$ of finite subsets of $\Delta^0$ closed under taking subsets.
A \emph{simplex of dimension $d$}, or a \emph{$d$-simplex} for short,
is an element of $\Delta$ of the form $\set{v_0,\dots,v_d}$.  We
denote the subset of $\Delta$ consisting of all $d$-simplices by
$\Delta^d$.  For simplicity, we refer to $\Delta^0$ as the set of
\emph{vertices} of $\Delta$.  The \emph{dimension} of $\Delta$ is the
maximum of the dimensions of all its simplices.  We denote the set of
maximal simplices of $\Delta$ by $\Delta^{\max}$.

\begin{definition}
  \th\label{def:DeltaA}
  \begin{enumerate}
  \item We define a simplicial complex $\Delta=\Delta(\T,S)$ on the
    following set:
    \[
      \Delta^0:=\setP{[M]}{M\in\twopresilt{S}{\T}\text{ is
          indecomposable}}.
    \]
    For a subset $\alpha=\set{[M_1],\dots,[M_t]}$ of $\Delta^0$ we
    define
    \[
      M(\alpha):=M_1\oplus\cdots\oplus M_t.
    \]
    Then we declare $\alpha$ to be a simplex of $\Delta$ if
    $M(\alpha)\in\twopresilt{S}{\T}$.
  \item Given a basic two-term presilting complex for $A$
    $M=M_1\oplus\cdots\oplus M_t$ we define
    \[
      \sigma(M):=\set{[M_1],\dots,[M_t]}\in\Delta.
    \]
  \end{enumerate}
\end{definition}

We need to recall more terminology.  A simplicial complex $\Delta$ of
dimension $n$ is \emph{pure} if every maximal simplex of $\Delta$ has
dimension $n$ and every simplex is contained in a maximal simplex. Let
$\Delta$ be a pure simplicial complex of dimension $n$. The boundary
of $\Delta$ consists of the $(n-1)$-simplices which are contained in
exactly one maximal simplex.  We say that $\Delta$ is
\emph{non-branching} if every $(n-1)$-simplex of $\Delta$ is contained
in at most two maximal simplices.

The following result is a reinterpretation of the results of
\cite{adachi_tau-tilting_2014} in terms of the simplicial complex
$\Delta(\T,S)$.

\begin{theorem}
  \th\label{properties-of-Delta} \cite{adachi_tau-tilting_2014} Let
  $n:=|S|$ and $\Delta=\Delta(\T,S)$.  The following statements hold:
  \begin{enumerate}
  \item\label{it:Delta-max-sttiltA} The map $\alpha\mapsto M(\alpha)$
    induces a bijection
    \[
      \begin{tikzcd}
        \Delta^{\max}\rar&\twosilt{S}{\T}
      \end{tikzcd}
    \]
    with inverse $M\mapsto \sigma(M)$.
  \item\label{it:Delta-pure} The simplicial complex $\Delta$ is pure
    of dimension $n-1$.
  \item\label{it:Delta-non-branching} The simplicial complex $\Delta$
    is non-branching and has empty boundary.
  \end{enumerate}
\end{theorem}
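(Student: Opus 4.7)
The plan is to transfer the relevant statements of \cite{adachi_tau-tilting_2014} from $\sttilt A$ to $\Delta(\T,S)$ by means of the bijection $\twosilt{S}{\T}\leftrightarrow\sttilt A$ of \th\ref{silting-object-gives-basis-of-K0}\eqref{it:bijection-sttilt-twosilt}, refined to a parallel bijection between basic two-term presilting objects in $(\add S)\ast(\add\Sigma S)$ and basic $\tau$-rigid pairs of $A$-modules (this refined bijection preserves the number of indecomposable direct summands). Once this is in place, each of the three assertions is a direct translation.

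For (a), the key input is the silting analog of \th\ref{tau-tilting-theory}\eqref{it:maximal-tau-rigid}: a two-term presilting object $M\in\twopresilt{S}{\T}$ lies in $\twosilt{S}{\T}$ if and only if every indecomposable $N$ with $M\oplus N\in\twopresilt{S}{\T}$ satisfies $N\in\add M$. Granting this, $\alpha\in\Delta^{\max}$ precisely when $M(\alpha)$ admits no strict enlargement in $\twopresilt{S}{\T}$, \ie when $M(\alpha)\in\twosilt{S}{\T}$. Combined with the tautological fact that $\alpha\mapsto M(\alpha)$ and $M\mapsto\sigma(M)$ are mutually inverse on basic objects, this yields the claimed bijection.

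Part (b) decomposes into two steps. Purity of dimension $n-1$ amounts to saying that every $M\in\twosilt{S}{\T}$ has exactly $n$ indecomposable direct summands, which is immediate from \th\ref{silting-object-gives-basis-of-K0}\eqref{it:S-K0-basis} (the summands form a $\ZZ$-basis of $K_0(\T)$). That every simplex is contained in a maximal one is the silting-theoretic Bongartz completion: given $M(\alpha)\in\twopresilt{S}{\T}$, transferring \th\ref{tau-tilting-theory}\eqref{it:Bongartz-completion} through the refined bijection produces $N\in\twosilt{S}{\T}$ with $\add M(\alpha)\subseteq\add N$, \ie $\alpha\subseteq\sigma(N)$. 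For (c), a codimension-one simplex $\alpha$ has $|\alpha|=n-1$, so $M(\alpha)$ is an ``almost complete'' two-term presilting object; transferring \th\ref{tau-tilting-theory}\eqref{it:mutation-in-sttiltA} through the bijection yields exactly two completions of $M(\alpha)$ to an object of $\twosilt{S}{\T}$. Therefore every $(n-2)$-simplex is contained in exactly two maximal simplices, which gives non-branching and emptiness of the boundary simultaneously.

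The main obstacle is the refinement of \th\ref{silting-object-gives-basis-of-K0}\eqref{it:bijection-sttilt-twosilt} to the presilting level, along with the three silting counterparts of \th\ref{tau-tilting-theory} quoted above. These can be extracted from \cite{adachi_tau-tilting_2014} by bookkeeping with $\tau$-rigid pairs, or invoked directly from the silting-theoretic literature (\eg \cite{aihara_silting_2012,iyama_intermediate_2014}); once they are in hand, the verification of \eqref{it:Delta-max-sttiltA}--\eqref{it:Delta-non-branching} is mechanical.
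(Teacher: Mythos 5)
Your proposal is correct and follows essentially the same approach as the paper: transfer \th\ref{tau-tilting-theory}\eqref{it:maximal-tau-rigid}, \eqref{it:Bongartz-completion}, and \eqref{it:mutation-in-sttiltA} across the bijection of \th\ref{silting-object-gives-basis-of-K0}\eqref{it:bijection-sttilt-twosilt} and read off each assertion. The only cosmetic difference is that you justify purity directly via \th\ref{silting-object-gives-basis-of-K0}\eqref{it:S-K0-basis}, whereas the paper derives it from part (a); these are the same observation stated at different levels of explicitness.
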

\begin{proof}
  \eqref{it:Delta-max-sttiltA} This statement follows immediately from
  \th\ref{tau-tilting-theory}\eqref{it:maximal-tau-rigid} and
  \th\ref{silting-object-gives-basis-of-K0}\eqref{it:bijection-sttilt-twosilt}.
  
  \eqref{it:Delta-pure} By part \eqref{it:Delta-max-sttiltA} we have
  that all maximal simplices of $\Delta$ have dimension $n-1$. The
  fact that every simplex of $\Delta$ is contained in a maximal
  simplex is just a restatement of Bongartz completion for
  $\tau$-rigid $A$-modules
  (\th\ref{tau-tilting-theory}\eqref{it:Bongartz-completion})
  interpreted in $\twosilt{S}{\T}$ via the bijection given in
  \th\ref{silting-object-gives-basis-of-K0}\eqref{it:bijection-sttilt-twosilt}.

  \eqref{it:Delta-non-branching} Every almost complete presilting
  object in $\twopresilt{S}{A}$ is the direct summand of exactly two
  basic silting objects in $\twosilt{S}{\T}$, see
  \th\ref{tau-tilting-theory}\eqref{it:mutation-in-sttiltA} (and again
  use the bijection given in
  \th\ref{silting-object-gives-basis-of-K0}\eqref{it:bijection-sttilt-twosilt}). Therefore
  $\Delta$ is non-branching and has empty boundary.
\end{proof}

Let $\Delta$ be a simplicial complex. We define the \emph{union} and
the \emph{intersection} of a collection of simplices
$\setP{\alpha_i\in\Delta}{i\in I}$ to be the simplicial complexes
\[
  \bigvee_{i\in I}\alpha_i:=\setP{\beta\in\Delta}{\exists i\in I\text{
      such that }\beta\subseteq\alpha_i}
\]
and
\[
  \bigwedge_{i\in I}\alpha_i:=\setP{\beta\in\Delta}{\forall i\in
    I\text{ we have }\beta\subseteq\alpha_i}.
\]
respectively.

\begin{definition}
  Let $\Delta$ be a simplicial complex which is pure of dimension
  $n$. We say that $\Delta$ is \emph{shellable} if there exist a well
  order $\preceq$ on $\Delta^{\max}$, called a \emph{shelling}, such
  that for each $\alpha\in\Delta^{\max}$ the simplicial complex
  \[
    \Delta(\alpha):=\alpha\wedge\left(\bigvee_{\beta\prec\alpha}\beta\right)
  \]
  is pure of dimension $n-1$.
\end{definition}

The following result is analogous to the main result of
\cite{riedtmann_simplicial_1991}.

\begin{theorem}
  \th\label{shellability-sphere} Let $A:=\End_\T(S)$ and suppose that
  $A$ is a $\tau$-tilting finite algebra with $n+1$ simple
  modules. Set $\Delta:=\Delta(\T,S)$.  Then, the following statements
  hold:
  \begin{enumerate}
  \item\label{it:-trf-Delta-shellable} If $n\geq1$, then the
    simplicial complex $\Delta$ is shellable.
  \item\label{it:trf-Delta-sphere} The geometric realization of
    $\Delta$ is homeomorphic to an $n$-dimensional sphere.
  \item If $n\geq2$, then $\Delta$ is simply connected.
  \end{enumerate}
\end{theorem}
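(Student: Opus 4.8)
The plan is to establish the three statements in order, using each one to feed the next. For part~\eqref{it:-trf-Delta-shellable}, I would argue by induction on the number of maximal simplices (which is finite since $A$ is $\tau$-tilting finite, hence $|\twosilt{S}{\T}| = |\sttilt A|$ is finite by \th\ref{char-tau-rigid-finite}). The key is to exploit the partial order on $\twosilt{S}{\T} \cong \sttilt A$ together with \th\ref{properties-of-Delta}: the complex $\Delta$ is pure of dimension $n$, non-branching, with empty boundary, and its maximal simplices correspond bijectively to support $\tau$-tilting modules. I would choose the shelling order by ordering $\Delta^{\max}$ compatibly with the partial order on $\sttilt A$ — roughly, process the maximal faces in an order that refines the poset structure (starting from $A$ at the top, or $0$ at the bottom). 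The crucial combinatorial input is that when one passes from a maximal face $\alpha$ corresponding to $M$ to an adjacent maximal face $\beta$ corresponding to a mutation $\mu_k(M)$, exactly one of $M, \mu_k(M)$ is larger in the partial order (this is the content of \th\ref{bigger or smaller}\eqref{it:muk-a-37}, translated through the bijection). One then needs to verify that for each $\alpha$ in this order, the subcomplex $\Delta(\alpha) = \alpha \wedge \bigl(\bigvee_{\beta \prec \alpha} \beta\bigr)$ is pure of dimension $n-1$, i.e., that the previously-shelled faces meet $\alpha$ exactly in a union of facets (codimension-one simplices) of $\alpha$ and that this union is nonempty but not all of $\partial\alpha$ (unless $\alpha$ is the last face). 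Here non-branchingness and empty boundary are exactly what guarantee each facet of $\alpha$ is shared with precisely one other maximal face, so the count works out; one must check that the ``already processed'' set of facets of $\alpha$ is a proper, nonempty, pure-dimensional subcomplex, which is where I expect the real work to lie — likely one shows that the facets of $\alpha$ shared with larger (already-processed) faces are precisely the $k$ for which $\mu_k(M) > M$, and a connectivity/interval argument in $\sttilt A$ (using \th\ref{main-torsion-classes} and the fact that $Q(\sttilt A)$ is connected) shows this set is nonempty.

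For part~\eqref{it:trf-Delta-sphere}, I would invoke the classical theorem (due to Danaraj--Klee) that a shellable, non-branching pseudomanifold with empty boundary of dimension $n$ has geometric realization homeomorphic to $S^n$: a shelling of such a complex builds the realization as an iterated union of $n$-balls glued along $(n-1)$-spheres or $(n-1)$-balls, and the non-branching + empty-boundary hypotheses force each intermediate stage to be an $n$-ball and the final stage to be an $n$-sphere. Since $\Delta = \Delta(\T,S)$ is pure of dimension $n$, non-branching with empty boundary by \th\ref{properties-of-Delta}\eqref{it:Delta-non-branching}, and shellable by part~\eqref{it:-trf-Delta-shellable} (when $n \geq 1$), the conclusion follows immediately for $n \geq 1$. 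For $n = 0$, one checks directly: $\Delta$ is a finite non-branching pure $0$-dimensional complex with empty boundary, so each vertex lies in exactly... actually for $n=0$ the faces of codimension one are the empty simplex, and empty boundary plus non-branching forces exactly two vertices (this matches $|\sttilt A| = 2$ for an algebra with one simple, namely $A$ and $0$), whose realization is $S^0$.

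For part~(c), given $n \geq 2$, simple connectedness is a formal consequence of the sphere statement: $S^n$ is simply connected for $n \geq 2$. Alternatively, and perhaps more in the spirit of giving a self-contained argument, one can derive simple connectedness directly from shellability — a shellable complex of dimension $\geq 2$ has the homotopy type of a wedge of spheres of the top dimension (the shelling gives a CW structure with cells only in dimensions $0$ and $n$ after collapsing), and in particular its fundamental group is trivial once $n \geq 2$; but since part~\eqref{it:trf-Delta-sphere} already identifies the space as $S^n$, the cleanest route is just to cite it. The main obstacle in the whole proof is genuinely the combinatorial bookkeeping in part~\eqref{it:-trf-Delta-shellable}: producing the correct shelling order from the poset $\sttilt A$ and verifying the purity of each $\Delta(\alpha)$ — this requires a careful interplay between the mutation theory of \th\ref{tau-tilting-theory}, the order-theoretic results of \th\ref{bigger or smaller} and \th\ref{main-torsion-classes}, and the abstract shellability criterion, and I would expect to need an intermediate lemma characterizing, for each maximal face $\alpha$, exactly which of its $n+1$ facets lie below it in the chosen order.
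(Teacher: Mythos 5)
You have the right architecture: linearly extend the partial order on $\twosilt{S}{\T}\cong\sttilt A$ to a shelling order, use non-branching and empty boundary from \th\ref{properties-of-Delta}, invoke Danaraj--Klee for part (b), handle $n=0$ by inspection, and deduce (c) from (b). This matches the paper exactly for parts (b) and (c), and your choice of shelling order (a linear extension of $\sttilt A$) is the same as the paper's, up to reversing the direction.

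The gap is in part (a). You correctly state what must be verified — that the previously-shelled faces meet $\alpha_j$ exactly in a union of facets of $\alpha_j$ — but you never explain how to prove it, and your subsequent remarks drift toward the wrong question. Identifying which facets $\alpha_j\setminus\{v\}$ have already been visited is not the real work; a union of such facets is automatically pure of dimension $n-1$. The substantive claim is that \emph{every} face $\beta\in\Delta(\alpha_j)$ of codimension $\geq 2$ is contained in a previously-visited facet, and neither ``non-branching plus empty boundary'' nor a ``connectivity/interval argument'' about $Q(\sttilt A)$ delivers this. The paper supplies the missing step with co-Bongartz completion and \th\ref{main-torsion-classes}: given $\beta\in\Delta(\alpha_j)$, let $M(\alpha_\ell)$ be the minimal two-term silting object having $M(\beta)$ as a summand (co-Bongartz completion, translated through \th\ref{silting-object-gives-basis-of-K0}\eqref{it:bijection-sttilt-twosilt}). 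Since $\beta$ lies in some earlier $\alpha_k$ with $k<j$, one gets $M(\alpha_\ell)<M(\alpha_j)$, hence $\ell<j$. Applying \th\ref{main-torsion-classes} then produces a mutation $M$ of $M(\alpha_j)$ with $M(\alpha_\ell)\leq M<M(\alpha_j)$; by the interval description of silting objects containing a fixed presilting summand, $M(\beta)\in\add M$, so $\sigma(M)\wedge\alpha_j$ is the desired codimension-1 face. Without this step the purity of $\Delta(\alpha_j)$ is unproved, and that is exactly the content you flagged as ``where the real work lies'' without identifying how to do it.

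Also note your sentence ``the facets of $\alpha$ shared with larger (already-processed) faces are precisely the $k$ for which $\mu_k(M)>M$'' tacitly commits to processing larger silting objects first, while the paper chooses the opposite orientation; either works by duality, but you should be consistent with the formulation of \th\ref{main-torsion-classes}\eqref{it:main-torsion-classes-a} vs.~\eqref{it:main-torsion-classes-b} depending on which direction you descend.
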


We reformulate a special case of results of Danaraj and Klee \cite{danaraj_shellings_1974} in our context (see also \cite{unger_shellability_1999}):
 \begin{theorem}\th\label{danaraj} after \cite[Props. 1.1 and 1.2]{danaraj_shellings_1974}
  A geometric realization of a finite non-branching shellable simplicial complex that is pure of dimension $n$ is homeomorphic to an $n$-dimensional ball or to an $n$-dimensional sphere.  
 \end{theorem}

\begin{proof}[Proof of \th\ref{shellability-sphere}]
  \eqref{it:-trf-Delta-shellable} Let $n\geq 1$. Choose an ordering
  $\alpha_0\prec\alpha_1\prec\dots\prec\alpha_k$ of $\Delta^{\max}$
  such that $M(\alpha_i)< M(\alpha_j)$ implies
  $\alpha_i\prec \alpha_j$.  We need to show that for each
  $0\leq j\leq k-1$ the simplicial complex $\Delta(\alpha_j)$ is pure
  of dimension $n-1$.
  
  The case $j=0$ is obvious, so let $1\leq j\leq k-1$. First, observe
  that every simplex of $\Delta(\alpha_j)$ has codimension at least 1
  in $\Delta$ for it must be properly contained in $\alpha_j$.  Hence,
  to show that $\Delta(\alpha_j)$ is pure of dimension $n-1$ it is
  sufficient to show that every simplex of $\Delta(\alpha_j)$ is
  contained in a simplex of $\Delta(\alpha_j)$ of codimension 1 in
  $\Delta$.  Indeed, let $\beta\in\Delta(\alpha_j)$ and
  $M(\alpha_\ell)$ be the smallest two-term presilting complex having
  $M(\beta)$ as a direct summand (see
  \cite[Prop. 2.9]{adachi_tau-tilting_2014} and use the bijection
  given in
  \th\ref{silting-object-gives-basis-of-K0}\eqref{it:bijection-sttilt-twosilt}). We
  have $M(\alpha_\ell)\neq M(\alpha_j)$ since $\beta\subset \alpha_k$
  for some $k<j$. Hence $M(\alpha_\ell)<M(\alpha_j)$ and, by the
  choice of the ordering, we have $\ell< j$.  By
  \th\ref{main-torsion-classes} (and using the bijection given in
  \th\ref{silting-object-gives-basis-of-K0}\eqref{it:bijection-sttilt-twosilt})
  there exists $M\in\twopresilt{S}{\T}$ obtained by mutation from
  $M(\alpha_j)$ and such that $M(\alpha_\ell)\leq M< M(\alpha_j)$.
  Then, because of the choice of the ordering of $\Delta^{\max}$,
  there exist $m<j$ such that $\sigma(M)=\alpha_m$.  Hence, the
  codimension 1 simplex $\alpha_m\wedge\alpha_j$ is contained in
  $\Delta(\alpha_j)$ and we have $\beta\in(\alpha_m\wedge\alpha_j)$.
  The claim follows.  This shows that $\alpha_1,\dots,\alpha_k$ is a
  shelling of $\Delta$.

  \eqref{it:trf-Delta-sphere} If $n=0$, then $\sttilt A=\set{0,A}$,
  hence
  \th\ref{silting-object-gives-basis-of-K0}\eqref{it:bijection-sttilt-twosilt}
  implies that $\twosilt{S}{\T}=\set{\Sigma S,S}$.  Therefore the
  geometric realization of $\Delta$ consists of two points and hence
  it is homeomorphic to a 0-dimensional sphere.

  Let $n\geq 1$. By \th\ref{properties-of-Delta} and part
  \eqref{it:-trf-Delta-shellable} we have that $\Delta$ is a finite non-branching
  shellable simplicial complex of pure dimension $n$. So, by \th\ref{danaraj}, the geometric realization of $\Delta$
  is homeomorphic to an $n$-dimensional ball or to an $n$-dimensional sphere. As moreover, again by \th\ref{properties-of-Delta}, $\Delta$ has empty boundary, we deduce that the geometric realization of $\Delta$
  is actually homeomorphic to an $n$-dimensional sphere. The last claim then
  follows.
\end{proof}

Let $\Delta$ be a pure simplicial complex. We construct a free
groupoid with set of objects $\Delta^{\max}$ as follows: Let
$\alpha,\beta\in\Delta^{\max}$. If the intersection
$\alpha\wedge\beta$ is a simplex of $\Delta$ of codimension 1, then
there is a pair of mutually inverse isomorphisms
$\varphi_{\alpha,\beta}\colon\alpha\to\beta$ and
$\varphi_{\beta,\alpha}\colon \beta\to\alpha$. which we call
\emph{mutations}. By definition, morphisms in $\Delta^{\max}$ are
given by finite compositions of mutations.

A \emph{cycle} is a sequence of mutations of the form.
\[
  \begin{tikzcd}
    \mu\colon\alpha=\alpha_0\rar{\mu_1}&\alpha_1\rar{\mu_2}&\cdots\rar{\mu_{k-1}}&\alpha_{k-1}\rar{\mu_k}&\alpha_k=\alpha
  \end{tikzcd}
\]
We say that $\mu$ is a \emph{cycle of rank $N$} if
$\bigwedge_{i=0}^k\alpha_i$ is a simplex of codimension $N$ in
$\Delta$. With some abuse of terminology, we say that a cycle is
\emph{generated by cycles of rank $N$} if it belongs to the normal
subgroupoid of $\Delta^{\max}$ generated by all the cycles of rank
$N$.

\begin{theorem}
  \th\label{property-R2} Let $\Delta$ be a shellable simplicial
  complex which is pure of dimension $n$ and such that $\Delta^{\max}$
  is connected as a groupoid. Then, the subgroupoid of $\Delta^{\max}$
  generated by all cycles coincides with the subgroupoid generated by
  all the cycles of rank 2.
\end{theorem}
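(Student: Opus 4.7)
The plan is to induct on a lexicographic complexity measure of cycles relative to the given shelling. Fix a shelling $\alpha_0\prec\alpha_1\prec\cdots$ of $\Delta^{\max}$. Given a cycle $\mu=(\delta_0\to\delta_1\to\cdots\to\delta_k=\delta_0)$, let $\beta(\mu):=\max_{\preceq}\{\delta_0,\ldots,\delta_k\}$ and let $N(\mu)$ denote the number of indices $i$ with $\delta_i=\beta(\mu)$. I would induct on the pair $(\beta(\mu),N(\mu))$ ordered lexicographically. The base case $\beta(\mu)=\alpha_0$ is trivial: since $\Delta(\alpha_0)=\emptyset$ the simplex $\alpha_0$ has no codimension-$1$ adjacency with any $\gamma\prec\alpha_0$, so $\delta_i=\alpha_0$ for all $i$ forces $k=0$ and $\mu$ to be the identity.

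For the inductive step, fix $j$ with $\delta_j=\beta:=\beta(\mu)\succ\alpha_0$. Since $\beta$ is maximal in $\mu$, the neighbours $\delta_{j-1},\delta_{j+1}\prec\beta$, so the codimension-$1$ faces $f_1:=\delta_{j-1}\wedge\beta$ and $f_2:=\delta_{j+1}\wedge\beta$ of $\beta$ are facets of the pure $(n-1)$-dimensional complex $\Delta(\beta)$. The key application of shellability is that $\Delta(\beta)$ is strongly connected, so $f_1,f_2$ can be joined by a sequence of facets $f_1=h_0,h_1,\ldots,h_r=f_2$ of $\Delta(\beta)$ in which each $g_s:=h_{s-1}\wedge h_s$ is of codimension $2$ in $\Delta$. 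For $0<s<r$ pick $\gamma_s\prec\beta$ with $\gamma_s\wedge\beta=h_s$, and set $\gamma_0:=\delta_{j-1},\gamma_r:=\delta_{j+1}$. Using that $\beta\to\gamma_s\to\beta$ is the identity in the groupoid, I insert these identities to rewrite $\delta_{j-1}\to\beta\to\delta_{j+1}$ as the concatenation of triangles $\gamma_{s-1}\to\beta\to\gamma_s$. Each such triangle lies in the star of the codimension-$2$ face $g_s$, and can therefore be replaced, modulo a single cycle of rank $2$ based at $\gamma_{s-1}$ inside that star, by any path from $\gamma_{s-1}$ to $\gamma_s$ in the star of $g_s$ that avoids $\beta$. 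Concatenating these replacements produces a cycle $\mu'$ equivalent to $\mu$ modulo rank-$2$ cycles in which the visit at index $j$ has been eliminated, so that either $N(\mu')<N(\mu)$, or $N(\mu)=1$ and $\beta(\mu')\prec\beta$, closing the induction.

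The main obstacle is the existence of the detour in the star of $g_s$ avoiding $\beta$. In the non-branching situation applicable to $\Delta(\T,S)$, the star of a codimension-$2$ face is a cycle graph by \th\ref{properties-of-Delta}, so removing any single vertex leaves a path connecting the other two and the detour is automatic. In the fully general shellable setting stated in the theorem, this step requires a more delicate argument exploiting the shellability of $\Delta$ near $g_s$ to rule out that $\beta$ is a cut vertex in the star; this is the technical heart of the proof and is precisely where the shellability hypothesis plays its essential role beyond merely guaranteeing the connectedness of $\Delta^{\max}$ as a groupoid. A secondary point, used in the inductive step, is the strong connectivity of $\Delta(\beta)$; this follows from the standard observation that the shelling of $\Delta$ induces a shelling of $\Delta(\beta)$, combined with the fact that every pure shellable complex of positive dimension is strongly connected by a one-line induction on the shelling.
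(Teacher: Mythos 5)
Your induction scheme (on the pair consisting of the largest maximal face visited and its multiplicity, ordered lexicographically) and your identification of a codimension-$2$ face of $\beta$ as the pivot for the local rewriting are exactly the paper's strategy. But the step you yourself flag as ``the technical heart'' --- the existence of a path from $\gamma_{s-1}$ to $\gamma_s$ inside the star of the codimension-$2$ face $g_s$ that avoids $\beta$ --- is a genuine gap, not a deferrable technicality. Your justification covers only the non-branching case, whereas the theorem is stated for an arbitrary shellable pure complex with connected $\Delta^{\max}$, and non-branching is not among the hypotheses; without this step the induction does not close. (A side remark: the detour through strong connectivity of $\Delta(\beta)$ is superfluous, since any two distinct facets of the simplex $\beta$ already meet in a face of codimension $2$ in $\Delta$, so $f_1\wedge f_2$ itself serves as the pivot, i.e.\ one may take $r=1$.)

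The paper fills precisely this gap with a ``common descent'' construction, and this is where shellability is really used. Rather than producing a detour avoiding the peak directly, one shows that both neighbours $\delta_{j-1}$ and $\delta_{j+1}$ can be joined to a \emph{common} maximal face $\beta^{*}$ by sequences of mutations in which every maximal face contains $g:=\delta_{j-1}\wedge\beta\wedge\delta_{j+1}$ and is strictly smaller than $\beta$ in the shelling order. The construction is: whenever the two partial descents currently end at distinct faces $\beta''\prec\beta'$, the face $g$ lies in $\Delta(\beta')=\beta'\wedge\bigl(\bigvee_{\varepsilon\prec\beta'}\varepsilon\bigr)$ because $g\subseteq\beta'\wedge\beta''$; since $\Delta(\beta')$ is pure of dimension $n-1$ by shellability, $g$ is contained in some codimension-$1$ face of $\beta'$ shared with a maximal face $\gamma\prec\beta'$, and one extends the descent by the mutation $\gamma\to\beta'$; the well-order guarantees termination at a common $\beta^{*}$. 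The concatenation of the two descents is the required replacement path: all its maximal faces contain $g$, so its difference with $\delta_{j-1}\to\beta\to\delta_{j+1}$ is a cycle of rank $2$, and all of them are strictly smaller than the peak, so the inductive measure drops. You would need to supply an argument of this kind (or an independent proof that $\beta$ is not a cut vertex of the star of $g$) to complete your proof.
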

\begin{proof}
  Fix a shelling $(\Delta^{\max},\preceq)$ of $\Delta^{\max}$ and let
  \[
    \begin{tikzcd}
      \mu\colon\alpha=\alpha_0\rar{\mu_1}&\alpha_1\rar{\mu_2}&\cdots\rar{\mu_{k-1}}&\alpha_{k-1}\rar{\mu_k}&\alpha_k=\alpha
    \end{tikzcd}
  \]
  be a cycle. Without loss of generality, we can assume that two
  subsequent mutations are not inverse of each other.

  First, suppose that $\alpha$ is the least element in
  $(\Delta^{\max},\preceq)$. Let $\alpha_i$ be the greatest element in
  the set $\set{\alpha_0,\alpha_1,\dots,\alpha_k}$ and $t$ be the
  number of times it appears in $\mu$. We shall proceed by induction
  on $(\alpha_i, t)\in(\Delta^{\max},\preceq)\times(\mathbb{N},\leq)$
  ordered lexicographically.

  If $\alpha_0=\alpha_i$, then $k=0$ so the result is trivial in this
  case. Suppose otherwise that $\alpha_0\neq\alpha_i$. Then the
  intersection $\delta:=\alpha_{i-1}\wedge\alpha_i\wedge\alpha_{i+1}$
  is a simplex of codimension 2 in $\Delta$. We claim that there exist
  a simplex $\beta\in\Delta^{\max}$ such that there exist increasing
  sequences of mutations $\xi\colon\beta\to \alpha_{i-1}$ and
  $\zeta\colon\beta\to\alpha_{i+1}$ such that all the maximal
  simplices involved in $\xi$ and $\zeta$ contain $\delta$.

  Indeed, let $\xi'\colon\beta'\to\alpha_{i-1}$ and
  $\zeta'\colon\beta''\to\alpha$ be increasing sequences of mutations
  such that all the maximal simplices involved in $\xi'$ and $\zeta'$
  contain $\delta$. Such sequences exist as we can take
  $\beta'=\alpha_{i-1}$ and $\beta''=\alpha_{i+1}$. Without loss of
  generality, we suppose that $\beta''\prec\beta'$, hence $\delta$ is
  contained in
  \[
    \Delta(\beta'):=\beta'\wedge\left(\bigvee_{\varepsilon\prec\beta'}\varepsilon\right).
  \]
  Since $\Delta(\beta')$ is pure of dimension $n-1$, there exist a
  simplex $\sigma\in\Delta(\beta')^{\max}$, which then has codimension
  1 in $\Delta$, such that $\delta\subset\sigma$. Next, let
  $\gamma\in\Delta^{\max}$ be such that $\sigma\subset\gamma$ and
  $\gamma\prec\beta'$. Since $\sigma\subset(\beta'\wedge\gamma)$,
  there exist a mutation $\nu\colon\gamma\to\beta'$. Also, given that
  $\delta\subset\gamma$, we obtain a new sequence of mutations
  $\nu\xi'$ such that all simplices involved contain
  $\delta$. Finally, as $(\Delta^{\max},\preceq)$ is a well order,
  this process must terminate. This proves the existence of the
  required sequences of mutation $\xi$ and $\zeta$.
  
  Let $\nu=\mu_1\mu_2\cdots\mu_{i-1}$ and
  $\eta=\mu_{i+2}\cdots\mu_{k-1}\mu_k$.  It follows that
  \[
    \mu=\nu\mu_i\mu_{i+1}\eta=(\nu\xi^{-1}\zeta\eta)(\eta^{-1}\zeta^{-1}\xi\mu_i\mu_{i+1}\eta).
  \]
  Note that, by construction, all the simplices in
  $\nu\xi^{-1}\zeta\eta$ are smaller or equal than $\alpha_i$, and
  that $\alpha_i$ appears $t-1$ times in this cycle. Hence, by the
  induction hypothesis, $\nu\xi^{-1}\zeta\eta$ is in the normal
  subgroupoid of $\Delta^{\max}$ generated by the cycles of rank
  2. Next, $\zeta^{-1}\xi\mu_i\mu_{i+1}$ is a cycle of rank 2, for all
  the maximal simplices involved contain the codimension 2 simplex
  $\delta$. Hence $\mu$ is generated by cycles of rank 2. This
  finishes the proof in the case where $\alpha$ is the least element
  in $(\Delta^{\max},\preceq)$.
  
  If $\alpha$ is not the least element in $(\Delta^{\max},\preceq)$,
  let $\alpha'$ be the least element and choose a sequence of
  mutations $\rho\colon\alpha'\to\alpha$. Then, by what we have shown
  above, the sequence of mutations $\rho\mu\rho^{-1}$ is generated by
  cycles of rank 2, so the result holds also in this case.
\end{proof}

\section{$g$-vectors of two-term silting objects}
\label{sec:g-vectors}

Two-term silting complexes in $\KKK{A}$ for a finite dimensional
algebra $A$ have been studied in representation theory by a number of
authors, \eg \cite{hoshino_t-structures_2002,
  derksen_general_2009,adachi_tau-tilting_2014}. They have a numerical
invariant called $g$-vectors, and in this section we study further
combinatorial properties of $g$-vectors in more general triangulated
categories. Our treatment in this section follows closely that of
\cite{dehy_combinatorics_2008}.

\subsection{Basic properties}

Throughout this section, we fix an arbitrary field $K$ and a $K$-linear, $\Hom$-finite,
Krull-Schmidt triangulated category $\T$ with a basic silting object
$S=S_1\oplus\cdots\oplus S_n$, and let $A:=\End_\T(S)$. Recall from
\th\ref{silting-object-gives-basis-of-K0} that we have a bijection
$\twosilt{S}{\T}\to\sttilt A$ and that $K_0(\T)$ has basis
$[S_1],\dots,[S_n]$.

\begin{definition}
  For an object $M\in\T$, we define the following numerical
  invariants:
  \begin{enumerate}
  \item The \emph{$g$-vector of $M$ with respect to $S$} is the
    integer vector $g_S^M={}^\top(g_1,\dots,g_n)$ where
    \[
      [M] = \sum_{i=1}^{n} g_i[S_i]
    \]
    in $K_0(\T)$.
  \item Suppose that $M=M_1\oplus\cdots\oplus M_n$ is a basic silting
    object in $\T$. The \emph{$\GGG$-matrix of $M$ with respect to
      $S$} is the integer matrix
    $\GGG(S,M):=\begin{bmatrix}g_S^{M_1}\mid\cdots\mid
      g_S^{M_n}\end{bmatrix}$.
  \end{enumerate}
\end{definition}

The following observation follows easily from the definitions.

\begin{proposition}
  \th\label{G-identities} Let $L=L_1\oplus\cdots\oplus L_n$ and
  $M=M_1\oplus\cdots\oplus M_n$ be basic silting objects in $\T$. The
  following identities hold:
  \begin{enumerate}
  \item $\GGG(S,L) = \GGG(S,M)\GGG(M,L)$.
  \item\label{it:inverses}
    $\GGG(S,M)\GGG(M,S) = \GGG(M,S) \GGG(S,M) = \mathbf{1}_n$.
  \end{enumerate}
\end{proposition}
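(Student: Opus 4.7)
The plan is to treat both identities as direct consequences of the basis-change formalism in $K_0(\T)$, for which all the substantive input has already been collected in the excerpt. By \th\ref{silting-object-gives-basis-of-K0}\eqref{it:S-K0-basis}, each of the tuples $[S_1],\dots,[S_n]$, $[M_1],\dots,[M_n]$, and $[L_1],\dots,[L_n]$ is a $\ZZ$-basis of $K_0(\T)$, so the matrices $\GGG(S,M)$, $\GGG(M,L)$, and $\GGG(S,L)$ are literally the change-of-basis matrices between these three bases, with columns recording the coordinates of the target basis in the source basis.

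For identity (a), I would simply expand. By definition of $\GGG(M,L)$, writing its $(k,j)$-entry as $g_{kj}$, the class $[L_j]$ satisfies $[L_j]=\sum_{k=1}^n g_{kj}\,[M_k]$ in $K_0(\T)$. Substituting the analogous expansions $[M_k]=\sum_{i=1}^n h_{ik}\,[S_i]$, where $h_{ik}$ is the $(i,k)$-entry of $\GGG(S,M)$, and collecting coefficients of $[S_i]$ yields $[L_j]=\sum_i\bigl(\sum_k h_{ik}g_{kj}\bigr)[S_i]$. Since the $[S_i]$ are a basis, the coefficient $\sum_k h_{ik}g_{kj}$ must equal the $(i,j)$-entry of $\GGG(S,L)$, which is exactly the $(i,j)$-entry of the matrix product $\GGG(S,M)\GGG(M,L)$.

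Identity (b) is then immediate from (a): take $L=S$, noting that $\GGG(S,S)=\mathbf{1}_n$ since $g_S^{S_i}$ is by definition the $i$-th standard basis vector of $\ZZ^n$. This gives $\GGG(S,M)\GGG(M,S)=\mathbf{1}_n$; swapping the roles of $S$ and $M$ in the same argument gives $\GGG(M,S)\GGG(S,M)=\mathbf{1}_n$.

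There is essentially no obstacle here: the proposition is a bookkeeping lemma whose content is entirely the associativity of basis change in a free abelian group, once one knows that silting objects give bases of $K_0(\T)$. The only thing worth flagging is that one should be careful with the column-versus-row convention used in the definition of $\GGG(S,M)$, so that the order of multiplication in (a) matches the order of composition of basis changes $S\leftarrow M\leftarrow L$; getting this right is just a matter of matching the index placement in the expansion above.
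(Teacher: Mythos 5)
Your proposal is correct and takes essentially the same approach as the paper: the paper's proof likewise observes that $\GGG(S,M)$ is the base-change matrix between the two bases of $K_0(\T)$ furnished by \th\ref{silting-object-gives-basis-of-K0}\eqref{it:S-K0-basis}, and deduces (a) and then (b). You simply spell out the index bookkeeping that the paper leaves implicit.
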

\begin{proof}
  The first identity follows since the matrix $\GGG(S,M)$ gives the
  basis change in $K_0(\T)$ from $\set{[S_1],\dots,[S_n]}$ to
  $\set{[M_1],\dots,[M_n]}$.  The second identity now follows.
\end{proof}

We remind the reader that a subset $X$ of $\bigoplus_{i=1}^n\ZZ[S_i]$
is \emph{sign-coherent} if for all $x,y\in X$ with decompositions
\[
  x=\sum_{i=1}^n x_i[S_i]\quad\text{and}\quad y=\sum_{i=1}^n y_i[S_i]
\]
and all $i\in\set{1,\dots,t}$, we have that $x_i$ and $y_i$ are both
non-positive or both non-negative. The following property has been of
interest in (cluster-)tilting theory, see for example \cite[Lemma 1.2]{happel_2005},
\cite[Lemma 2.25]{aihara_silting_2012} and
\cite[Prop. 2.17]{dehy_combinatorics_2008}.

\begin{proposition}
  \th\label{g-vector-determine-M} Let $M\in(\add S)*(\add\Sigma S)$ be
  such that $\T(M,\Sigma M)=0$, and $S'\xto{u} S''\to M\to \Sigma S'$
  be a triangle in $\T$ such that $S',S''\in\add S$ and $u$ is a
  radical morphism. Then $S'$ and $S''$ have no non-zero common direct
  summands.
\end{proposition}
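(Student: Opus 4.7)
My plan is to derive a contradiction from assuming $S_0$ is a nonzero common indecomposable direct summand of $S'$ and $S''$, by exhibiting a nonzero element of $\T(M,\Sigma M)$.

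First, I identify $\T(M,\Sigma M)$ explicitly. Because $S$ is silting, $\T(S'',\Sigma S'')=0$ and $\T(S'',\Sigma^2 S')=0$, so applying $\T(S'',-)$ to the rotated triangle $\Sigma S'\to\Sigma S''\to\Sigma M\to\Sigma^2 S'$ forces $\T(S'',\Sigma M)=0$. Feeding this vanishing into the long exact sequence obtained by applying $\T(-,\Sigma M)$ to the triangle $S'\xto{f}S''\to M\to\Sigma S'$ and using the expressions $\T(S'',M)=\T(S'',S'')/f\T(S'',S')$ and $\T(S',M)=\T(S',S'')/f\End_\T(S')$ for the adjacent terms, I obtain
\[
  \T(M,\Sigma M)\;\cong\;\Hom_\T(S',S'')\big/\bigl(\End_\T(S'')\circ f\;+\;f\circ\End_\T(S')\bigr).
\]
Hence it suffices to exhibit some $g\in\Hom_\T(S',S'')$ that is not of the form $\psi f+f\eta$.

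Using Krull--Schmidt, I decompose $S'=S_0^{k'}\oplus X$ and $S''=S_0^{k''}\oplus Y$ with $k',k''\geq 1$ and with neither $X$ nor $Y$ containing an indecomposable summand isomorphic to $S_0$. In this block decomposition I write $f=\bigl(\begin{smallmatrix}a&b\\c&d\end{smallmatrix}\bigr)$; the radicality of $f$ implies that every entry of the matrix $a\colon S_0^{k'}\to S_0^{k''}$ over the local ring $\End_\T(S_0)$ lies in $\rad\End_\T(S_0)$. I then take $g=\bigl(\begin{smallmatrix}e&0\\0&0\end{smallmatrix}\bigr)$, where $e$ is the $k''\times k'$ matrix with a single entry $1_{S_0}$ at position $(1,1)$ and zeros elsewhere.

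The decisive ingredient is the elementary lemma: if $T$ is indecomposable with $T\not\cong S_0$, then every composition $S_0\to T\to S_0$ lies in $\rad\End_\T(S_0)$, for otherwise $T$ would contain $S_0$ as a direct summand, contradicting its indecomposability. Suppose $g=\psi f+f\eta$. Inspecting the scalar $(1,1)$-entry of the upper-left block of this equation gives a sum $\psi^{11}a+a\eta^{11}+\psi^{12}c+b\eta^{21}$: the first two contributions have radical entries because $a$ does, while the last two have radical entries by the lemma applied to compositions factoring through indecomposable summands of $Y$ and $X$. Consequently this entry lies in $\rad\End_\T(S_0)$, but the corresponding entry of $g$ is $1_{S_0}$, a contradiction. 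Therefore $g$ represents a nonzero class in $\T(M,\Sigma M)$, violating the hypothesis.

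I expect the main obstacle to be the initial long-exact-sequence calculation, where one must carefully track the silting vanishings to pin down the cokernel description of $\T(M,\Sigma M)$. The subsequent Krull--Schmidt bookkeeping is routine, but the argument depends crucially on splitting off \emph{every} copy of $S_0$ from $X$ and $Y$ so that the indecomposability lemma applies uniformly to the contributions passing through $X$ and $Y$.
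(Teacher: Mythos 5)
The paper does not give its own proof of this proposition — it is stated without proof and the subsequent remark points to the analogous results \cite[Lemma 2.10]{plamondon_generic_2013} and \cite[Thm. 8.1]{cerulli_caldero-chapoton_2015} (and, in the $2$-Calabi--Yau case, \cite[Prop. 2.1]{dehy_combinatorics_2008}). Your argument is correct and is in the same spirit as those references. The identification
\[
  \T(M,\Sigma M)\;\cong\;\Hom_\T(S',S'')\big/\bigl(\End_\T(S'')\circ f+f\circ\End_\T(S')\bigr)
\]
is a sound long-exact-sequence computation: the presilting vanishing $\T(S,\Sigma^k S)=0$ for $k>0$ gives $\T(S'',\Sigma M)=0$, which shows $\T(M,\Sigma M)$ is a quotient of $\T(S',M)$ by precomposition with $f$, and the remaining identifications of $\T(S',M)$ and $\T(S'',M)$ as cokernels follow the same way. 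The block-matrix analysis is also correct: with $S'=S_0^{k'}\oplus X$, $S''=S_0^{k''}\oplus Y$ and \emph{all} copies of $S_0$ removed from $X$ and $Y$, the upper-left block of any $\psi f+f\eta$ has entries in $\rad\End_\T(S_0)$ --- the contributions through $a$ because $f$ is radical, and the contributions through $X$ and $Y$ by the local-endomorphism-ring lemma you state --- so it cannot equal your $g$ whose $(1,1)$-entry is $1_{S_0}$. You correctly flag that isolating every $S_0$-summand is essential; without that the lemma would not apply uniformly to the terms passing through $X$ and $Y$. No gap.
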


As an immediate consequence we have the following result, which is
given in \cite[Lemma 2.10]{plamondon_generic_2013} and also in
\cite[Thm. 8.1]{cerulli_caldero-chapoton_2015} in a more general
setting.

\begin{theorem}\th\label{sign-coherent}
  \th\label{sign_coherent} The set $\set{g_S^{M_1},\dots,g_S^{M_n}}$
  is sign-coherent.
\end{theorem}

We now study the relation between the partial order on
$\twosilt{S}{\T}$ and their $\GGG$-matrices. 

The following observation for a finite dimensional algebras $A$ over an arbitrary field $K$ justifies the importance of $g$-vectors.  It generalizes some results which are known when $K$ is algebraically closed.

 For $M \in \mod A$, we denote $g^M := g_A^P$ where $P$ is a minimal projective presentation of $M$. For $M, N \in \mod A$, we write $g^M\ge g^N$ if $g^M - g^N$ has non-negative coefficients.

\begin{theorem}
  \th\label{order-implies-epi} Let $A$ be a finite dimensional algebra over an arbitrary field $K$
  and $M$ and $N$ be $\tau$-rigid $A$-modules. Then,
  the following statements hold:
  \begin{enumerate}
  \item\label{it:g-vectors_determine_presilting} If $g^M=g^N$, then
    $M\cong N$.
  \item If $K$ is infinite and $g^M\ge g^N$, then there is a surjective morphism
    $M\to N$.
  \end{enumerate}
\end{theorem}

First we give a proof when $K$ is algebraically closed.

\begin{proof}[Proof of \th\ref{order-implies-epi} when $K$ is algebraically closed]
It suffices to prove (b).
  Let $P_1\xto{a} P_0\to M\to0$ and $Q_1\xto{b} Q_0\to N\to0$ be
  minimal projective presentations for $M$ and $N$. Thus
  $g^{M}=[P_0]-[P_1]$ and $g^{N}=[Q_0]-[Q_1]$.  The group
  $G:=\Aut_A(P_1)\times\Aut_A(P_0)^{\op}$ acts on $\Hom_A(P_1,P_0)$,
  and the orbit $Ga$ is open and dense in $\Hom_A(P_1,P_0)$ with
  respect to Zariski topology by a Voigt-type Lemma
  (\emph{e.g}~\cite[Lemma 2.16]{plamondon_generic_2013}).  Similarly
  $G':=\Aut_A(Q_1)\times\Aut_A(Q_0)^{\op}$ acts on $\Hom_A(Q_1,Q_0)$,
  and the orbit $G'b$ is open and dense in $\Hom_A(Q_1,Q_0)$ with
  respect to Zariski topology.

  By our assumption, there exist a split epimorphism $p:P_0\to Q_0$
  and a split monomorphism $i:P_1\to Q_1$. Now consider maps
  \begin{eqnarray*}
    \pi:\Hom_A(P_1,P_0)\to\Hom_A(P_1,Q_0)&&\pi(f)=fp,\\
    \pi':\Hom_A(Q_1,Q_0)\to\Hom_A(P_1,Q_0)&&\pi'(g)=ig
  \end{eqnarray*}
  which are surjective by our choice of $p$ and $i$.  It is enough to
  show that $\pi(Ga)\cap\pi'(G'b)$ is non-empty since elements
  $g\in G$ and $g'\in G'$ satisfying $\pi(ga)=\pi'(g'b)$ give a
  commutative diagram
  \[
    \begin{tikzcd}
      P_1\rar{ga}\dar{i}&P_0\dar{p}\\
      Q_1\rar{g'b}&Q_0
    \end{tikzcd}
  \]
  and hence we have a surjective morphism
  $M=\Coker(ga)\to\Coker(g'b)=N$.

  Since $Ga$ is a dense subset of $\Hom_A(P_1,P_0)$ and $\pi$ is
  surjective, we have that $\pi(Ga)$ is a dense subset of
  $\Hom_A(P_1,Q_0)$.  On the other hand, by Chevalley's Theorem
  \cite[Ex. II.3.19]{hartshorne_algebraic_1977}, $\pi(Ga)$ is a
  constructible subset of $\Hom_A(P_1,Q_0)$:
  $\pi(Ga)=C_1\cup\cdots\cup C_\ell$ for locally closed subsets
  $C_1,\ldots,C_\ell$.  Since
  \[\Hom_A(P_1,Q_0)=\overline{\pi(Ga)}=\overline{C_1}\cup\cdots\cup\overline{C_\ell}\]
  holds and $\Hom_A(P_1,Q_0)$ is irreducible,
  $\Hom_A(P_1,Q_0)=\overline{C_i}$ holds for some $i$. Then $C_i$ is
  an open subset of $\Hom_A(P_1,Q_0)$, and in particular $\pi(Ga)$
  contains an open dense subset of $\Hom_A(P_1,Q_0)$.

  By the same argument, $\pi'(G'a)$ contains an open dense subset of
  $\Hom_A(P_1,Q_0)$.  Consequently we have
  $\pi(Ga)\cap\pi'(G'b)\neq\emptyset$.
\end{proof}

For a general case, we need the following observations which are of independent interest:

 \begin{proposition}\th\label{field-extension}
  We consider a field extension $K \subset L$, $\overline A := L \otimes_K A$ and $\overline{(-)}:= L \otimes_K-\colon\mod A\to\mod\overline A$ the canonical functor. Then, the following hold for $A$-modules $M$ and $N$:
  \begin{enumerate}
   \item $\Hom_{\overline A}(\overline M, \overline N) = \overline{\Hom}_A(M, N) := L \otimes_K \Hom_A(M, N)$, where $\lambda \otimes f \in L \otimes_K \Hom_A(M, N)$ is identified with $\lambda f \in \Hom_{\overline A}(\overline M, \overline N)$.
   \item $\overline{\Rad}_A(M, N) \subseteq \Rad_{\overline A}(\overline M, \overline N)$.
   \item $g^{\overline M} = \overline g^M$ where $g^{\overline M}$ is the $g$-vector of $\overline M$ as a $\overline A$-module and $\overline g^M$ is the image of $g^M$ by the canonical map $K_0 (\proj A) \to K_0 (\proj \overline A)$ induced by $P \mapsto \overline P$.
   \item $\tau \overline M = \overline{\tau M}$. So $M$ is $\tau$-rigid if and only if $\overline M$ is.   
   \item If $\overline N \in \Fac \overline M$ then $N \in \Fac M$.
  \end{enumerate}
 \end{proposition}

 \begin{proof}
  We fix a minimal projective resolution $\cdots \to P_2 \xto{u_2} P_1 \xto{u_1} P_0 \to M \to 0$.
  
  (a) We get that $\overline P_1 \to \overline P_0 \to \overline M \to 0$ is exact with $\overline P_1$ and $\overline P_0$ projective and therefore $0 \to \Hom_{\overline A} (\overline M, \overline N) \to \Hom_{\overline A} (\overline P_0, \overline N) \to \Hom_{\overline A} (\overline P_1, \overline N)$ is also exact. By a similar argument, we get that $0 \to \overline{\Hom}_{A} (M, N) \to \overline{\Hom}_{A} (P_0, N) \to \overline{\Hom}_{A} (P_1, N)$ is also exact. For $i = 0, 1$, as $P_i$ is projective, it is clear that $\overline{\Hom}_{A} (P_i, N) = \Hom_{\overline A} (\overline P_i, \overline N)$. So $\Hom_{\overline A}(\overline M, \overline N) = \overline{\Hom}_A(M, N)$.

  (b) It is an immediate consequence of the following characterisation of the radical:
    $$\Rad_A(M, N) = \left\{f \in \Hom_A(M, N) \mid \forall g: N \to M, (fg)^{\dim M} = 0\right\}.$$

  (c) As the $u_i$ are radical morphisms, the $\overline u_i$ are also radical so \begin{equation} \label{res-bar} \cdots \to \overline P_2 \xto{\overline u_2} \overline P_1 \xto{\overline u_1} \overline P_0 \to \overline M \to 0\end{equation} is a minimal projective resolution. Hence $g^{\overline M} = \overline g^M$.

  (d) Using $\tau = D \Tr$ where $\Tr$ is the transpose and \eqref{res-bar}, it is clear.
  
  (e) We construct an exact sequence of $K$-vector spaces
   $$M \otimes_K \Hom_A(M, N) \xto{\alpha} N \to C \to 0$$
  where $\alpha$ is the evaluation map. Then applying $L \otimes_K -$, we obtain the exact sequence
   $$\overline M \otimes_L \Hom_{\overline A} (\overline M, \overline N) \xto{\overline \alpha} \overline N \to \overline C \to 0.$$
   As $\overline N \in \Fac \overline M$, we deduce $\overline C = 0$ so $C = 0$. Therefore, $N \in \Fac M$.
 \end{proof}

Now we are ready to complete a proof of \th\ref{order-implies-epi}.

\begin{proof}[Proof of \th\ref{order-implies-epi}]
   Let $L=\overline K$ be an algebraic closure of $K$.
   By \th\ref{field-extension}(d), $\overline M$ and $\overline N$ are $\tau$-rigid $\overline A$-module.
      
   Let us now prove (a). As $g^M = g^N$, we get $g^{\overline M} = g^{\overline N}$ by \th\ref{field-extension}(c). So, using (b) for the $\overline K$-algebra $\overline A$, we get that $\overline M \cong \overline N$. So, using \th\ref{field-extension},
   $$\overline{\Rad}_A(M, N) \subseteq \Rad_{\overline A}(\overline M, \overline N) \subsetneq \Hom_{\overline A}(\overline M, \overline N) = \overline{\Hom}_A(M, N)$$ and, by consequence, $\Rad_A(M, N) \subsetneq \Hom_A(M, N)$. Therefore, $M \cong M' \oplus X$ and $N \cong N' \oplus X$ for some $M', N', X \in \mod A$ with $X \neq 0$, and $g^{M'} = g^{N'}$. By an induction on the number of indecomposable direct summands of $M$, we get that $M' \cong N'$ hence $M \cong N$.

   Finally, we prove (b) when $K$ is an infinite field. As $g^M \ge g^N$, using \th\ref{field-extension}(c), we get $g^{\overline M} \ge g^{\overline N}$ so there is an epimorphism $f: \overline M \to \overline N$. As $\Hom_{\overline A}(\overline M, \overline N) = \overline{\Hom}_A(M, N)$, we can write $f = \sum_{i = 1}^n \lambda_i f_i$ for some $\lambda_i \in \overline K$ and $f_i \in \Hom_A(M, N)$. As $\Hom_{\overline A}(\overline M, \overline N)$ can be embedded in the space of $\dim M \times \dim N$ matrices and $f$ is an epimorphism, there exists a maximal minor $\Delta$ such that $\Delta(f) \neq 0$. We consider the polynomial $R \in K[t_i]_{1 \le i \le n}$ defined by $R(t_1, \dots, t_n) = \Delta(\sum_{i = 1}^n t_i f_i)$. As $R(\lambda_1, \dots, \lambda_n) \neq 0$, we get that $R \neq 0$. Hence, as $K$ is infinite, there exist $\mu_1, \dots, \mu_n \in K$ such that $R(\mu_1, \dots, \mu_n) \neq 0$, hence $\sum_{\i = 1}^n \mu_i f_i \in \Hom_A(M, N)$ is surjective.
  \end{proof}

Let $M=M_1\oplus\cdots\oplus M_t$ be a basic object in $\T$.  We write
$K_0(\T)_{\mathbb{R}}:=K_0(\T)\otimes_{\mathbb{Z}}\mathbb{R}$ and
denote the cone spanned by $\set{[M_1],\dots,[M_t]}$ in $K_0(\T)_\RR$
by $C(M)$.  For example, $C(S)$ coincides with the positive cone
$(\RR_{\geq0})^n$, and $C(\Sigma S)$ coincides with the negative cone
$(\RR_{\leq0})^n$.  The following result, which is analogous to known
results for tilting modules (see \cite{hille_volume_2006} and \cite[Thm. 5.5]{adachi_tau-tilting_2014}) and
cluster-tilting objects (see \cite[Thm. 2.3]{dehy_combinatorics_2008}
and \cite{plamondon_generic_2013}) when $K$ is algebraically closed, follows immediately from
\th\ref{order-implies-epi}.

\begin{corollary}
  \th\label{cones} The following statements hold:
  \begin{enumerate}
  \item The map
    $M\mapsto g_S^M$ induces an injection
    $\twosilt{S}{\T}\to K_0(\T)$.
  \item\label{it:cones-dont-intersect} Let $M,N\in\twosilt{S}{\T}$ be
    basic objects. Then, $C(M)\cap C(N)=C(X)$ where $X$ satisfies
    $\add X =\add M\cap\add N$.
  \end{enumerate}
  In particular, cones determined by different objects in
  $\twosilt{S}{\T}$ intersect only at their boundaries. Therefore,
  $g$-vectors give a natural geometric realization of the simplicial
  complex $\Delta(\T)$.
\end{corollary}

\begin{example}
  \th\label{ex:A3r2-realization} Let $A$ be the algebra given by the
  quiver $3\xla{y} 2\xla{x} 1$ subject to the relation $xy=0$.  The
  geometric realization of $\Delta(A)$ in $K_0(A)_\RR$ using
  $g$-vectors is illustrated in Figure \ref{fig:Delta}.
  \begin{figure}[t]
    \centering \scalebox{0.75}{\tdplotsetmaincoords{100}{155}
\begin{tikzpicture}[scale=4, tdplot_main_coords]
  \draw[color=lightgray,->] (0,0,0) -- (0.7,0,0);
  \draw[color=lightgray,->] (0,0,0) -- (0,0.7,0);
  \draw[color=lightgray,->] (0,0,0) -- (0,0,0.7);
  \draw[color=lightgray,->] (0,0,0) -- (-0.7,0,0);
  \draw[color=lightgray,->] (0,0,0) -- (0,-0.7,0);
  \draw[color=lightgray,->] (0,0,0) -- (0,0,-0.7);
  \node (P1) at (0,0,1) {$0\to P_3$};
  \node (P2) at (-1,0,0) {$0\to P_2$};
  \node (P3) at (0,-1,0) {$0\to P_1$};
  \node (mP1) at ($(0,0,0)-(P1)$) {$P_3\to0$};
  \node (mP2) at ($(0,0,0)-(P2)$) {$P_2\to0$};
  \node (mP3) at ($(0,0,0)-(P3)$) {$P_1\to0$};
  \node (S2) at ($(P2)-(P1)$) {$P_1\to P_2$};
  \node (S3) at ($(P3)-(P2)$) {$P_2\to P_1$};

  \draw (S3) -- (P3);
  \draw (P2) -- (P3);
  \draw[dashed] (S3) -- (mP2);
  \draw[dashed] (P2) -- (mP3);
  \draw[dashed] (mP2) -- (mP3);
  \draw (S2) -- (P3);
  \draw[dashed] (mP1) -- (mP3);
  \draw[dashed] (P1) -- (mP3);
  \draw[dashed] (S2) -- (mP3);
  \draw[dashed] (P1) -- (mP2);
  \draw[dashed] (mP1) -- (mP2);
  \draw (S3) -- (P1);
  \draw (P1) -- (P2);
  \draw (P1) -- (P3);
  \draw (S2) -- (P2);
  \draw (S2) -- (mP1);
  \draw (P3) -- (mP1);
  \draw (S3) -- (mP1);
\end{tikzpicture}

    \caption{Geometric realization of $\Delta(A)$, see
      \th\ref{ex:A3r2-realization}.}
    \label{fig:Delta}
  \end{figure}
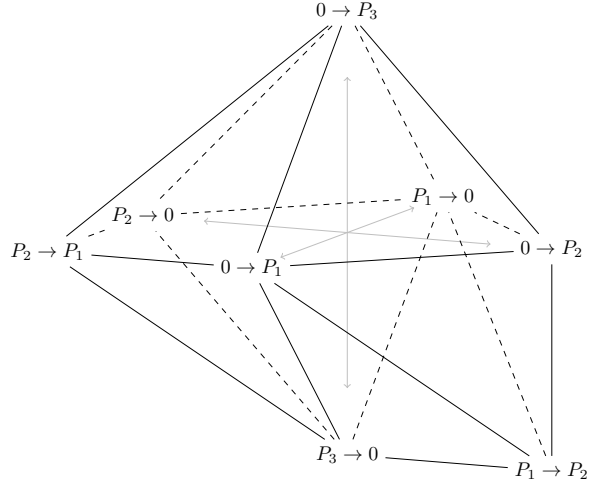
\end{example}

\subsection{The partial order of two-term silting objects in terms of
  $g$-vectors}

In the remainder we fix a Krull-Schmidt triangulated category $\T$
with a silting object $S$. We show that the Hasse quiver
$Q(\twosilt{S} \T)$ of $2$-term silting objects is entirely determined
by the geometry of their $g$-vectors.

We denote the interior of the cone $C(S)$ by $C(S)^{\circ}$.  For a
subset $I$ of $K_0(\T)_{\mathbb{R}}$, we denote the subspace of
$K_0(\T)_{\mathbb{R}}$ spanned by $I$ by $\Span I$ .

\begin{lemma}
  Let $M\in \twopresilt{S}\T$. If $M$ is not silting, then
  $C(S)^{\circ}\cap\Span C(M)=\emptyset$.
\end{lemma}
\begin{proof}
  By Bongartz completion, $M$ is a direct summand of some
  $N\in\twosilt{S}\T$. Take a decomposition
  $N=N_1\oplus\cdots\oplus N_n$ into indecomposable direct summands
  $N_i$, where $M=N_1\oplus\cdots\oplus N_m$ for some $m\le n$.  Since
  $[N_1],\ldots,[N_n]$ is a basis of $K_0(\T)$, we can write
  \[[S_i]=\sum_{j=1}^ng_{ij}[N_j]\] for some integers $g_{ij}$. Then,
  for any $j$ with $1\le j\le n$, we have the following.
  \begin{itemize}
  \item At least one element in $\{g_{1j},\ldots,g_{nj}\}$ is
    non-zero.
  \end{itemize}
  Moreover, by \th\ref{sign_coherent}, we have the following.
  \begin{itemize}
  \item The set $\{g_{1j},\ldots,g_{nj}\}$ is contained in either
    $\mathbb{Z}_{\ge0}$ or $\mathbb{Z}_{\le0}$.
  \end{itemize}
  Any element in $C(S)^{\circ}$ can be written as
  $v=\sum_{i=1}^n\lambda_iS_i$ for some $\lambda_i>0$.  By the above
  two properties, when we express $v$ as a linear combination of
  $[N_1],\ldots,[N_n]$, all the coefficients are non-zero. Therefore,
  if $v$ belongs to $\Span C(M)$, then we have $M=N$. Thus the
  assertion follows.
\end{proof}

Immediately we have the following observation.

\begin{proposition}\label{dichotomy}
  Let $L\in \twopresilt{S}\T$ with $|L|=|S|-1$. Then the hyperplane
  $\Span C(L)$ divides $K_0(\T)_{\mathbb{R}}$ into a half-space
  containing $C(S)$ and a half-space containing $C(\Sigma S)$.
\end{proposition}

Let us call the half-space containing $C(S)$ (respectively,
$C(\Sigma S)$) the \emph{positive} (respectively, \emph{negative})
half-space defined by the hyperplane $\Span C(L)$.

\begin{theorem}\label{orientation}
  Let $M,N\in \twosilt{S} \T$ such that $M$ and $N$ are mutation of
  each other.  Then the following conditions are equivalent, where
  $L\in\T$ is an object satisfying $(\add M)\cap(\add N)=\add L$.
  \begin{itemize}
  \item[(a)] $M>N$ (i.e. there exists an arrow $M\to N$ in the Hasse
    quiver $Q(\twosilt{S} \T)$).
  \item[(b)] $C(M)$ belongs to the positive half-space defined by
    $\Span C(L)$.
  \item[(c)] $C(N)$ belongs to the negative half-space defined by
    $\Span C(L)$.
  \end{itemize}
\end{theorem}

\begin{proof}
  Thanks to Proposition \ref{dichotomy}, it suffices to prove that (a)
  implies (b).  Since $M$ and $N$ are mutation of each other, we can
  write $M=X\oplus L$ and $N=Y\oplus L$ for indecomposable objects $X$
  and $Y$.  Since $M\in\twosilt{S}\T$ implies
  $S\in\twosilt{\Sigma^{-1} M}\T$, there exists a triangle
  \begin{equation}\label{S by M[-1]}
    \Sigma^{-1} M''\to \Sigma^{-1} M' \xrightarrow{f} S \to M''
  \end{equation}
  with $M',M'' \in \add M$. Then $f$ is a right
  $(\add \Sigma^{-1} M)$-approximation of $S$, and we can assume that
  $f$ is right minimal without loss of generality.  In the rest, we
  will prove that $M>N$ implies $X\notin\add M'$ and $X\in\add M''$.

  Since $M>N$, the exchange triangle has the form
  $X\to L' \to Y \to \Sigma X$ with $L'\in\add L$.  Applying
  $\Hom_\T(-, S)$, we have an exact sequence
 $$\Hom_\T(X, \Sigma S) \to \Hom_\T(L', \Sigma S) \to \Hom_\T(Y, \Sigma^2 S) = 0.$$
 Thus $S$ admits a minimal right $(\add \Sigma^{-1}M)$-approximation
 $f:\Sigma^{-1}M' \to S$ with $M' \in \add L$, and therefore the
 triangle \eqref{S by M[-1]} satisfies $X\notin\add M'$ and hence
 $X\in\add M''$.

 By \eqref{S by M[-1]}, we have an equality $[S]=[M'']-[M']$. Thus
 writing $[S]$ as a linear combination of direct summands of $M$ in
 $K_0(\T)$, the coefficient of $[X]$ is positive.  Thus $C(M)$ belongs
 to the positive half-space defined by $\Span C(L)$.
\end{proof}

Recall that a \emph{facet} of a cone $C(M)$ of $M\in\twosilt{S}\T$ is
a face of $C(M)$ in codimension one.

\begin{theorem}\label{recover Hasse}
  Let $\T$ be a triangulated category with $S\in\silt\T$. The Hasse
  quiver $Q(\twosilt{S}\T)$ can be recovered in the following way.
  \begin{itemize}
  \item The vertices are the cones $C(M)$ with $M\in\twosilt{S}\T$.
  \item There is an arrow $M\to N$ if and only if the cone $C(M)$ and
    $C(N)$ are adjacent by a facet $C(L)$, and $C(M)$ belongs to the
    positive half-space defined by $\Span C(L)$.
  \end{itemize}
\end{theorem}
\begin{proof}
  Recall that, for $M,N\in\twosilt{S}\T$, there exists an arrow
  between $M$ and $N$ in the Hasse quiver $Q(\twosilt{S} \T)$ if and
  only if $M$ and $N$ are mutation of each other if and only if the
  cones $C(M)$ and $C(N)$ are adjacent by a facet $C(L)$.  Since the
  orientation of the arrow is given by Theorem \ref{orientation}, we
  have the assertion.
\end{proof}

\begin{corollary}
  \th\label{cones_determine_partial_order} Let $A$ be a $\tau$-tilting
  finite algebra. Then the partial order of support $\tau$-tilting
  $A$-modules is entirely determined by the cones of $g$-vectors.
\end{corollary}

\begin{proof}
  By Theorem \ref{recover Hasse}, the Hasse quiver
  $Q(\twosilt{S}\T)\simeq Q(\sttilt A)$ is determined by the cones of
  $g$-vectors.  Since a partial order on a finite set is recovered
  from its Hasse quiver, we have the assertion.
\end{proof}

We pose the following conjecture.

\begin{conjecture}
  \th\label{the_conjecture} For an arbitrary finite dimensional
  algebra $A$, the partial order of support $\tau$-tilting $A$-modules
  is entirely determined by the cones of $g$-vectors.
\end{conjecture}

The next result gives some information on \th\ref{the_conjecture}.

\begin{proposition}
  \th\label{cone-order} Let $\T$ be a $\Hom$-finite Krull-Schmidt
  triangulated category and $S\in\silt\T$. For $M,N\in\twosilt{S}{\T}$
  we have
  \eqref{it:cone-conj11}$\Rightarrow$\eqref{it:cone-cond1}$\Rightarrow$\eqref{it:MN}
  where:
  \begin{enumerate}
  \item\label{it:cone-conj11} $N\in(\add M)*(\add(\Sigma S))$.
  \item\label{it:cone-cond1} $C(N)\subset C(M)+C(\Sigma S)$.
  \item\label{it:MN} $M\geq N$.
  \end{enumerate}
  Similarly, we have
  \eqref{it:cone-conj22}$\Rightarrow$\eqref{it:cone-cond2}$\Rightarrow$\eqref{it:MNN}
  where:
  \begin{enumerate}
    \setcounter{enumi}{3}
  \item\label{it:cone-conj22} $M\in (\add S)*(\add N)$.
  \item\label{it:cone-cond2} $C(M)\subset C(N)+C(S)$.
  \item\label{it:MNN} $M\geq N$.
  \end{enumerate}
\end{proposition}
\begin{proof}
  The implication
  \eqref{it:cone-conj11}$\Rightarrow$\eqref{it:cone-cond1} is obvious.

  \eqref{it:cone-cond1}$\Rightarrow$\eqref{it:MN} 
   The condition $C(N)\subset C(M)+C(\Sigma S)$ implies $g^{M'}\ge g^N$ for some $M'\in\add M$. Let $A := \End_\T(S)$. We consider the support $\tau$-tilting $A$-modules $X$, $X'$ and $Y$ corresponding to $M$, $M'$ and $N$. Using \th\ref{field-extension} for $L = \overline{K}$, we obtain $g^{\overline X'}\ge g^{\overline Y}$. Thus, by \th\ref{order-implies-epi}, we get $\overline Y \in \Fac \overline X$ so $Y \in \Fac X$ by \th\ref{field-extension}(e). Therefore $M \ge N$.
\end{proof}

Note that the conditions in \th\ref{cone-order} are not equivalent in
general, as shown by the following example.

\begin{example}
  Let $A$ be the preprojective algebra of type $A_3$.  In this case,
  if $M>N$ is a mutation in $\sttilt A$, then
  $C(N)\subset C(M)+C(\Sigma A)$ holds except in the following case
  (up to permutation of 1 and 3).

  Let $M$ and $N$ be support tau-tilting $A$-modules
  $M=\left[\begin{smallmatrix}1\\ 2\end{smallmatrix}\right]\oplus
  \left[\begin{smallmatrix}2\\ 1\end{smallmatrix}\right]$, and let
  $N=\left[\begin{smallmatrix}1\\ 2\end{smallmatrix}\right]\oplus
  S_1$.  Then $M>N$ holds, but the cone $C(N)$ is not contained in
  $C(M)+C(\Sigma A)$.  In fact, $C(N)$ is a non-negative linear
  combination of $[P_1]-[P_3]$, $[P_2]-[P_3]$ and $-[P_3]$.  Therefore
  $[S_1]=[P_1]-[P_2]$ does not belongs to $C(M)+C(\Sigma A)$ since the
  coefficient of $[P_3]$ in $[S_1]$ is 0.
\end{example}


\subsection{Applications}

In this subsection, we use $g$-vectors to 
classify all two-term silting
complexes for two specific finite dimensional algebras.
Our method is based on the following consequence of \th\ref{cones}:
If we have a set of two term silting objects such that the union of
the corresponding cones is dense in $\RR\otimes_\ZZ K_0(\KKK{A})$,
then it coincides with $\sttilt A$.

Let $A$ be the basic finite dimensional symmetric algebra whose
Gabriel quiver is the quiver
\[
  \begin{tikzcd}
    1\rar[bend left=15]{y}\rar[bend left=50]{x}&2\lar[bend
    left=15]{y}\lar[bend left=50]{x}
  \end{tikzcd}
\]
with relations $xy=yx$ and $x^2=y^2=0$.
Since $A$ is a symmetric algebra, every
silting complex is a tilting complex.

Let $A=X_0\oplus X_1$ so that $g^{X_0}=\gvec{1&0}$ and
$g^{X_1}=\gvec{0&1}$. For each $i\in\ZZ$ there exists an
indecomposable two-term presilting complex $X_i\in\KKK{A}$ such that
\[
  g^{X_i}=\gvec{1-i&i}
\]
and
\[\mu^-_w(A)=
  \begin{cases}
    X_i\oplus X_{i+1}&\mbox{if $w=\cdots2121$ has length $i\ge0$,}\\
    X_{-i}\oplus X_{1-i}&\mbox{if $w=\cdots1212$ has length $i\ge0$.}
  \end{cases}
\]
Thus $Q(\ttwosilt{A})$ has a connected component
\begin{equation}\label{component 1}
  \begin{tikzcd}[row sep=tiny, column sep=small]
	{}&X_1\oplus X_2\rar&X_2\oplus X_3\rar&X_3\oplus X_4\rar&\cdots\\
    A=X_0\oplus X_1\urar\drar\\
    &X_{-1}\oplus X_0\rar&X_{-2}\oplus X_{-1}\rar&X_{-3}\oplus
    X_{-2}\rar&\cdots
  \end{tikzcd}
\end{equation}
Similarly, for each $i\in\ZZ$ there exists an indecomposable two-term
presilting object $Y_i\in\KKK{A}$ such that
\[
  g^{Y_i}=\gvec{i-1&-i}
\]
satisfying $\Sigma A=Y_0\oplus Y_1$ and
\[\mu^+_w(\Sigma A)=
  \begin{cases}
    Y_i\oplus Y_{i+1}&\mbox{if $w=\cdots2121$ has length $i\ge0$,}\\
    Y_{-i}\oplus Y_{1-i}&\mbox{if $w=\cdots1212$ has length $i\ge0$.}
  \end{cases}
\]
Thus $\ttwosilt{A}$ has a connected component
\begin{equation}\label{component 2}
  \begin{tikzcd}[row sep=tiny, column sep=small]
	\cdots\rar&Y_{-3}\oplus Y_{-2}\rar&Y_{-2}\oplus Y_{-1}\rar&Y_{-1}\oplus Y_{0}\drar\\
    &&&&Y_0\oplus Y_1=\Sigma A\\
	\cdots\rar&Y_{3}\oplus Y_{4}\rar&Y_{2}\oplus Y_{3}\rar&Y_{1}\oplus
    Y_{2}\urar
  \end{tikzcd}
\end{equation}
The $g$-vectors of associated to the two-term silting complexes above
are plotted in the following picture of
$K_0(\KKK{A})\cong\ZZ[X_0]\oplus\ZZ[X_1]$:
\begin{equation}
  \label{fig:g-vec-A1}
  \boxinminipage{\tdplotsetmaincoords{180}{135}
\begin{tikzpicture}[scale=0.9, tdplot_main_coords]
  
  \draw[color=lightgray] (-4,4) -- (4,-4);

  \node (X0) at (1,0) {$X_0$};
  \node (X1) at (0,1) {$X_1$};
  \node (X2) at (-1,2) {$X_2$};
  \node (X3) at (-2,3) {$X_3$};
  \node (X4) at (-3,4) {$\cdots$};
  \node (Xm1) at (2,-1) {$X_{-1}$};
  \node (Xm2) at (3,-2) {$X_{-2}$};
  \node (Xm3) at (4,-3) {$\cdots$};
  
  \node (Z) at (0,0) {};

  \node (Y0) at (-1,0) {$Y_0$};
  \node (Y1) at (0,-1) {$Y_1$};
  \node (Y2) at (1,-2) {$Y_2$};
  \node (Y3) at (2,-3) {$Y_3$};
  \node (Y4) at (3,-4) {$\cdots$};
  \node (Ym1) at (-2,1) {$Y_{-1}$};
  \node (Ym2) at (-3,2) {$Y_{-2}$};
  \node (Ym3) at (-4,3) {$\cdots$};

  \path[commutative diagrams/.cd, every arrow]
  (Z.center) edge (X0)
  (Z.center) edge (X1)
  (Z.center) edge (X2)
  (Z.center) edge (X3)
  (Z.center) edge (Xm1)
  (Z.center) edge (Xm2)
  (Z.center) edge (Y0)
  (Z.center) edge (Y1)
  (Z.center) edge (Y2)
  (Z.center) edge (Y3)
  (Z.center) edge (Ym1)
  (Z.center) edge (Ym2);
\end{tikzpicture}
}
\end{equation}

\begin{theorem}
  \th\label{qsilt-A1} The quiver $Q(\ttwosilt{A})$ has precisely two
  connected components given by \eqref{component 1} and
  \eqref{component 2}.
\end{theorem}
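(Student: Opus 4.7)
The plan is to combine the explicit description of the complexes $X_i, Y_j$ with the cone geometry of $g$-vectors developed in \S\ref{sec:g-vectors}, and then argue by exhaustion using \th\ref{cones}.

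First, I would verify that the mutation triangles defining the $X_i$'s and $Y_j$'s give rise to the two subquivers \eqref{component 1} and \eqref{component 2} of $Q(\ttwosilt{A})$. This is a direct calculation from the presentation of $A$: starting from $A=X_0\oplus X_1$, one computes the minimal right/left $(\add X_1)$- and $(\add X_0)$-approximations and iterates, verifying the claimed $g$-vectors $g^{X_i}=(1-i,i)$; the dual calculation starting from $A[1]$ yields $g^{Y_j}=(j-1,-j)$. This gives the two subquivers; the remaining content of the theorem is that no other two-term silting complex exists.

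Next, I would carry out a cone-covering argument. Since $g^{X_i}$ lies on the line $\{p+q=1\}$, writing $v=aX_i+bX_{i+1}$ with $a,b\geq0$ yields $v\cdot(1,1)=a+b\geq0$, with equality only at the origin; a direct check (by computing the limiting directions $(-1,1)$ as $i\to+\infty$ and $(1,-1)$ as $i\to-\infty$) shows
\[
  \bigcup_{i\in\ZZ}C(X_i\oplus X_{i+1})=\{(p,q)\in\RR^2:p+q>0\}\cup\{(0,0)\}.
\]
Symmetrically, $\bigcup_{j\in\ZZ}C(Y_j\oplus Y_{j+1})=\{p+q<0\}\cup\{(0,0)\}$. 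Their union therefore covers $\RR^2$ except for the two open rays $\{p+q=0,\ (p,q)\neq(0,0)\}$, a set of dimension~$1$.

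Finally, I would conclude by exhaustion. Suppose, toward a contradiction, that $T\in\ttwosilt{A}$ is not among the complexes in \eqref{component 1} or \eqref{component 2}. By \th\ref{silting-object-gives-basis-of-K0}\eqref{it:S-K0-basis}, the $g$-vectors of the two indecomposable summands of $T$ form a $\ZZ$-basis of $K_0(\KKK{A})\cong\ZZ^2$, so $C(T)$ is a $2$-dimensional cone in $\RR^2$. By \th\ref{cones}\eqref{it:cones-dont-intersect}, for every $M$ in \eqref{component 1}\,$\cup$\,\eqref{component 2} we have $C(T)\cap C(M)=C(X)$ with $\add X=\add T\cap\add M$, which is at most $1$-dimensional since $T\neq M$. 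But the interior of $C(T)$ is a nonempty open subset of $\RR^2$, hence not contained in the $1$-dimensional set $\{p+q=0\}\setminus\{0\}$; so some interior point of $C(T)$ lies in the interior of some $C(M)$ from the two components. A neighborhood of that point is then contained in $C(T)\cap C(M)$, forcing this intersection to be $2$-dimensional — a contradiction.

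The main obstacle is the explicit verification in the first step, which hinges on the specific structure of $A$: one has to control the minimal approximation triangles and show that mutation really produces the stated indecomposable two-term presilting complexes for all $i\in\ZZ$. Once these complexes and their $g$-vectors are in hand, the cone covering calculation is elementary plane geometry and the exhaustion step is a formal consequence of \th\ref{cones}.
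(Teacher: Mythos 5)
Your proposal follows essentially the same approach as the paper: the paper's proof also rests on the observation that the cones $C(X_i\oplus X_{i+1})$, $C(Y_j\oplus Y_{j+1})$ are dense in $\RR\otimes_\ZZ K_0(\KKK{A})$, and then invokes \th\ref{cones} to exclude any further two-term silting complex. You have simply spelled out the exhaustion step (a putative extra $T$ would have a $2$-dimensional cone meeting the interior of some listed $C(M)$, forcing a $2$-dimensional intersection contradicting \th\ref{cones}\eqref{it:cones-dont-intersect}) which the paper leaves implicit.
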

\begin{proof}

  It readily follows from the above discussion that the union of the
  cones associated to the two-term silting complexes
  $X_i\oplus X_{i+1}$, $Y_i\oplus Y_{i+1}$ is dense in
  $\RR\otimes_\ZZ K_0(\KKK{A})$, see \eqref{fig:g-vec-A1}.
  By \th\ref{cones}, there are no other two-term silting complexes of $A$.
\end{proof}

Now we consider a different algebra, studied originally in
\cite[Ex. 35]{labardini-fragoso_quivers_2009}. Let $B$ be the Jacobian
algebra of the quiver with potential $(Q,W)$ where $Q$ is the quiver
\[
  \begin{tikzpicture}
    \node (1) at (330:1.5) {$1$}; \node (2) at (90:1.5)
    {$2$}; \node (3) at (210:1.5) {$3$};

    \path[commutative diagrams/.cd, every arrow, every label] (1)
    edge[bend left=15] node
    {$y$} (2) (2) edge[bend left=15] node
    {$y$} (3) (3) edge[bend left=15] node {$y$} (1)

    (1) edge[bend right=15] node[swap] {$x$} (2) (2) edge[bend
    right=15] node[swap] {$x$} (3) (3) edge[bend right=15] node[swap]
    {$x$} (1) ;
  \end{tikzpicture}
\]
and $W=x^3+y^3-(xy)^3$. Equivalently, $B=KQ/I$ where $I$ is the ideal
generated by the following relations:
\[
  \begin{cases}
    x^2=yxyxy\\
    y^2=xyxyx\\
    x^2y=xy^2=y^2x=yx^2=0.
  \end{cases}
\]

It is shown in \cite{najera-chavez_c-vectors_2012} that
$Q(\ttwosilt{B})$ has the following connected component which is a
3-regular tree:
\begin{equation}
  \label{fig:g-vec-B1}
  \boxinminipage{\begin{tikzcd}[row sep=small, column sep=tiny, ampersand replacement=\&]
      {}\&\&\&\&B\dar\ar{llld}\ar{rrrd}\\
      \&\mu_{1}^{-}B\dlar\drar\&\&\&\mu_{2}^{-}B\dlar\drar\&\&\&\mu_{3}^{-}B\dlar\drar\\
      \mu_{31}^{-} B\dar[dotted,path]\&\&\mu_{21}^{-} B\dar[dotted,path]\&\mu_{12}^{-} B\dar[dotted,path]\&\&\mu_{32}^{-} B\dar[dotted,path]\&\mu_{23}^{-} B\dar[dotted,path]\&\&\mu_{13}^{-} B\dar[dotted,path]\\
      \phantom{\tiny{.}}\&\&\phantom{\tiny{.}}\&\phantom{\tiny{.}}\&\&\phantom{\tiny{.}}\&\phantom{\tiny{.}}\&\&\phantom{\tiny{.}}
    \end{tikzcd}
  }
\end{equation}
and that the closure of the union of the cones of two-term silting
complexes in this connected component is
$\setP{(x,y,z)\in\RR^3}{x+y+z\geq0}$.  It follows by symmetry that
$Q(\ttwosilt{B})$ also has the following connected component:

\begin{equation}
  \label{fig:g-vec-B2}
  \boxinminipage{
    \begin{tikzcd}[row sep=small, column sep=tiny, ampersand replacement=\&]
      \phantom{\tiny{.}}\dar[dotted,path]\&\&\phantom{\tiny{.}}\dar[dotted,path]\&\phantom{\tiny{.}}\dar[dotted,path]\&\&\phantom{\tiny{.}}\dar[dotted,path]\&\phantom{\tiny{.}}\dar[dotted,path]\&\&\phantom{\tiny{.}}\dar[dotted,path]\\
      \mu_{31}^{+} \overline{B}\drar\&\&\mu_{21}^{+} \overline{B}\dlar\&\mu_{12}^{+} \overline{B}\drar\&\&\mu_{32}^{+} \overline{B}\dlar\&\mu_{23}^{+} \overline{B}\drar\&\&\mu_{13}^{+} \overline{B}\dlar\\
      \&\mu_{1}^{+}\overline{B}\ar{rrrd}\&\&\&\mu_{2}^{+}\overline{B}\dar\&\&\&\mu_{3}^{+}\overline{B}\ar{llld}\\
      {}\&\&\&\&\overline{B}
    \end{tikzcd}
  }
\end{equation}
where we write $\overline{B}:=\Sigma B$ to save space. In this case,
the closure of the union of the cones of two-term silting complexes in
this connected component is $\setP{(x,y,z)\in\RR^3}{x+y+z\leq0}$.

The following result can be proven analogously to \th\ref{qsilt-A1}.

\begin{theorem}
  The quiver $Q(\ttwosilt{B})$ has precisely two connected components
  given by \eqref{fig:g-vec-B1} and \eqref{fig:g-vec-B2}.
\end{theorem}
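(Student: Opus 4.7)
The plan is to follow the proof of \th\ref{qsilt-A1} verbatim in strategy. First I would note that, by the very construction of the two diagrams in \eqref{fig:g-vec-B1} and \eqref{fig:g-vec-B2}, each describes a subset of $Q(\ttwosilt{B})$ that is closed under mutation, so each is a full connected component once one verifies that the indecomposable two-term presilting summands exhibited really do assemble into basic two-term silting complexes. For \eqref{fig:g-vec-B1} this, together with the identification of the closure of the union of the $g$-vector cones of its vertices with the half-space $\setP{(x,y,z)\in\RR^3}{x+y+z\ge0}$, is provided by the explicit analysis of \cite{najera-chavez_c-vectors_2012}. The component \eqref{fig:g-vec-B2} is obtained by the symmetry $[M]\mapsto -[M]$ (corresponding on the level of complexes to the shift $\Sigma$), so the closure of the union of its cones is the opposite half-space $\setP{(x,y,z)\in\RR^3}{x+y+z\le0}$.

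The next step is to rule out additional connected components. The key observation is that the two half-spaces above cover all of $\RR\otimes_\ZZ K_0(\KKK{B})\cong\RR^3$. Suppose for contradiction that $N\in\ttwosilt{B}$ lies in neither of the two components just described. By \th\ref{silting-object-gives-basis-of-K0}\eqref{it:S-K0-basis}, the indecomposable summands of $N$ form a basis of $K_0(\KKK{B})$, so the cone $C(N)$ has full dimension $3$ in $\RR^3$. Since the two half-spaces together exhaust $\RR^3$, the interior of $C(N)$ must meet the interior of $C(M)$ for some two-term silting complex $M$ in one of the two known components. This contradicts \th\ref{cones}\eqref{it:cones-dont-intersect}, which asserts that cones of distinct basic two-term silting objects intersect only along their boundaries. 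Hence no such $N$ exists and the two components displayed are all of them.

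The hard part will not be this covering-and-exclusion step, which is essentially identical to the corresponding step in \th\ref{qsilt-A1}, but rather the combinatorial bookkeeping needed to justify the picture \eqref{fig:g-vec-B1}: one must verify that iterated mutation of $B$ really does yield the indicated $3$-regular tree of two-term silting complexes, compute the $g$-vectors of their indecomposable summands, and argue that the closure of the union of the resulting cones fills the half-space $\setP{(x,y,z)\in\RR^3}{x+y+z\ge0}$. Given that \cite{najera-chavez_c-vectors_2012} already establishes exactly these facts for $(Q,W)$, it suffices to import that computation; once done, the symmetry takes care of \eqref{fig:g-vec-B2}, and the geometric realization afforded by \th\ref{cones} finishes the proof.
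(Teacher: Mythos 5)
Your proposal matches the paper's intended argument: the paper proves this result by the same cone-covering argument used for \th\ref{qsilt-A1}, namely that the closures of the unions of $g$-vector cones for the two displayed components fill the two half-spaces $x+y+z\ge 0$ and $x+y+z\le 0$, hence all of $\RR\otimes_\ZZ K_0(\KKK{B})$, and \th\ref{cones}\eqref{it:cones-dont-intersect} then excludes any further full-dimensional silting cone. The delegation of the explicit $3$-regular-tree computation to \cite{najera-chavez_c-vectors_2012} and the use of the shift $\Sigma$ for the second component are exactly what the paper does.
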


\subsection{Connection to cluster-tilting theory}

Let $\C$ be a $K$-linear, $\Hom$-finite, Krull-Schmidt, 2-Calabi-Yau
triangulated category with a basic cluster-tilting object
$T=T_1\oplus\cdots\oplus T_n$. We remind the reader that we have
$\C=\add T*\add\Sigma T$, see for example
\cite[Sec. 2.1]{keller_cluster-tilted_2007}. We denote the set of
isomorphism classes of basic cluster-tilting objects in $\C$ by
$\ctilt\C$.

We note that in general the indecomposable direct summands of $T$ do
not form a basis of the Grothendieck group of $\C$. Thus, we consider
the split Grothendieck group $K_0^\oplus(T)$ of the additive category
$\add T$. That is, $K_0^\oplus(T)$ is the quotient of the free abelian
group generated by the isomorphism classes of objects in $\add T$ by
the subgroup generated by all elements of the form
\[ [T'\oplus T''] - [T'] - [T''].
\]
Thus we have
$K_0^\oplus(T)=\ZZ[T_1]\oplus\ZZ[T_2]\oplus\cdots\oplus\ZZ[T_n]$.

Let $M\in\C$. There exists a triangle
\begin{equation}
  \label{triangle-M}
  T^1\to T^0\xto{f} M\to \Sigma T^1  
\end{equation}
such that $T^0,T^1\in\add T$ and $f$ is a minimal right
$(\add T)$-approximation. The \emph{index of $M$ with respect to $T$}
is defined by
\[
  \ind_T(M):=[T^0]-[T^1]\in K_0^\oplus(T).
\]
Dually, consider a triangle
$M\xto{g}\Sigma^2({}^0T)\to{}\Sigma^2(^1T)\to \Sigma M$ with
${}^0 T,{}^1 T\in\add T$ and $g$ is a minimal left
$(\add T)$-approximation. The \emph{coindex of $M$ with respect to
  $T$} is defined by
\[
  \coind^T(M):=[{}^0 T]-[{}^1 T]\in K_0^\oplus(T).
\]
Note that we have $\coind^T(M)=-\ind^T(\Sigma^{-1}M)$

Let $M=M_1\oplus\cdots\oplus M_n\in\ctilt\C$. The \emph{$\GGG$-matrix
  of $M$ with respect to $T$} is the integer matrix
$\GGG(T,M):= [g_T^{M_1}\mid\cdots\mid g_T^{M_n}]$ where $g_T^{M_i}$ is
the column vector corresponding to $\ind_T(M_i)$ in the ordered basis
$\set{[T_1],\dots,[T_n]}$.  Similarly, we define the
\emph{$\CCC$-matrix of $M$ with respect to $T$} using coindices, and
denote it by $\CCC(T,M)$.

Let $A:=\End_\C(T)$. Recall from
\cite[Prop. 2(c)]{keller_cluster-tilted_2007} that the functor
$\Hom_\C(T,-)\colon\C\to\mod A$ induces an equivalence of categories
\[
  \begin{tikzcd}
    F\colon\C/[\Sigma T]\rar{\sim}&\mod A
  \end{tikzcd}
\]
where $[\Sigma T]$ is the ideal of $\C$ of morphisms that factor
through $\add\Sigma T$. Moreover, it is shown in
\cite[Thm. 4.1]{adachi_tau-tilting_2014} that $F$ induces a bijection
\begin{equation}
  \label{ctiltC-sttiltA}
  \begin{tikzcd}
    \ctilt\C\rar{F} & \sttilt A
  \end{tikzcd}
\end{equation}
given by $M=(X\oplus \Sigma T')\mapsto (FX,FT')$ where $\Sigma T'$
satisfies $\add M\cap\add\Sigma T=\add \Sigma T'$ and $X$ has no
indecomposable direct summands in $\add\Sigma T$.  Therefore, using
\th\ref{silting-object-gives-basis-of-K0}\eqref{it:bijection-sttilt-twosilt}
we deduce that the map
$M=X\oplus\Sigma T'\mapsto \Sigma FT'\oplus P(FX)$ where $P(FX)$ is a
minimal projective presentation of the $A$-module $FX$ induces a
bijection

\begin{equation}
  \label{ctiltC-twosiltA}
  \begin{tikzcd}[row sep=tiny]
    \ctilt\C\rar{\widetilde{(-)}}&\ttwosilt{A}.
  \end{tikzcd}
\end{equation}

\begin{theorem}
  \th\label{CG-identitites}
  Let $M\in\ctilt\C$. The following identities hold:
  \begin{enumerate}
  \item\label{it:g-matrix} $\GGG(T,M)=\GGG(A,\widetilde{M})$.
  \item\label{it:c-matrix}
    $\CCC(\Sigma^{-2}M,T)=\GGG(\widetilde{M},A)$.
  \end{enumerate}
\end{theorem}
\begin{proof}
  Part \eqref{it:g-matrix} is clear, since by definition a two-term
  complex in $\KKK{A}$ belongs to $\add A*\add(\Sigma A)$.

  \eqref{it:c-matrix} Let
  \begin{equation}
	\label{tri-1}
    T\xto{f}M'\xto{g} M''\xto{h} \Sigma T
  \end{equation}
  be a triangle in $\C$ where $f$ is a minimal left
  $(\add M)$-approximation. It follows from the minimality of $f$ that
  $M'$ has no indecomposable direct summands in $\add\Sigma T$.
  Moreover, since $M$ is rigid, $h$ is a right
  $(\add M)$-approximation and we can write $M''=Y\oplus (\Sigma T')$
  where $\Sigma T'$ satisfies $\add M\cap\add\Sigma T=\add \Sigma T'$
  and $Y$ has no indecomposable direct summands in $\add\Sigma T$. It
  follows from the bijection \eqref{ctiltC-sttiltA} that $(FM,FT')$ is
  a support $\tau$-tilting pair of $A$-modules. In addition, applying
  $F$ to the triangle \eqref{tri-1} yields an exact sequence
  \[
    A\xto{Ff}FM'\xto{Fg}FY\to0
  \]
  where $Ff$ is a minimal left $(\add FM)$-approximation.

  On the other hand, let $\widetilde{M}\in\ttwosilt{A}$ be the silting
  complex corresponding to $M$ via the bijection
  \eqref{ctiltC-twosiltA}. Let
  \begin{equation}
    \label{tri-2}
	A\xto{\alpha}\widetilde{M}'\xto{\beta}\widetilde{M}''\xto{\gamma}\Sigma A
  \end{equation}
  be a triangle in $\KKK{A}$ such that $\alpha$ is a minimal left
  $(\add \widetilde{M})$-approximation.  As above, we may write
  $\widetilde{M''}=\widetilde{Y}\oplus \Sigma P$ where $\Sigma P$
  satisfies $\add \widetilde{M}\cap\add \Sigma A=\add \Sigma P$ and
  $\widetilde{Y}$ has no indecomposable direct summands in
  $\add \Sigma A$.  Taking 0-th cohomomology of the triangle
  \eqref{tri-2} we obtain an exact sequence
  \[
    A\xto{H^0(\alpha)}H^0(\widetilde{M}')\xto{H^0(\beta)}H^0(\widetilde{Y})\to0
  \]
  where $H^0(\alpha)$ is a minimal left $(\add FM)$-approximation
  (note that $FM\cong H^0\widetilde{M}$ by construction). It follows
  that $FM'\cong H^0(\widetilde{M}')$ and
  $FY\cong H^0(\widetilde{Y})$.

  Finally, since $\ker F =\add \Sigma T$ and
  $\ker H^0(-)|_{\add A*\add(\Sigma A)}=\add \Sigma A$ , by comparing
  summands in the triangles \eqref{tri-1} and \eqref{tri-2} we deduce
  that
  $\coind^{\Sigma^{-2}M}(T)=[\Sigma^{-2}M']-[\Sigma^{-2}M'']\in
  K_0^\oplus(M)$ and
  $[A]=[\widetilde{M}']-[\widetilde{M}'']\in K_0(\KKK{A})$ have the
  same coordinates.
\end{proof}

We obtain a new proof of the following result in cluster-tilting
theory, see for example \cite[Thm. 4.1 and
Rmk. 4.2]{nakanishi_periodicities_2011}.

\begin{corollary}
  If $M$ is a cluster-tilting object in $\C$, then
  \[
    \CCC(\Sigma^{-2}M,T) \GGG(T,M)=\GGG(T,M)
    \CCC(\Sigma^{-2}M,T)=\mathbf{1}_n.
  \]
\end{corollary}
\begin{proof}
  By \th\ref{G-identities,CG-identitites} we have
  \[
    \CCC(\Sigma^{-2}M,T) \GGG(T,M)=\GGG(\widetilde{M},A)
    \GGG(A,\widetilde{M})=\mathbf{1}_n.\qedhere
  \]
\end{proof}



\bibliographystyle{alpha}
\bibliography{zotero}

\begin{thebibliography}{AHMV16}

\bibitem[AAC15]{adachi_tilting_2015}
Takahide Adachi, Takuma Aihara, and Aaron Chan.
\newblock Classification of two-term tilting complexes over brauer graph
  algebras.
\newblock {\em arXiv:1504.04827}, April 2015.

\bibitem[Ada16]{adachi_characterizing_2014}
Takahide Adachi.
\newblock Characterizing {$\tau$}-tilting finite algebras with radical square
  zero.
\newblock {\em Proc. Amer. Math. Soc.}, 144(11):4673--4685, 2016.

\bibitem[AHMV16]{angeleri-hugel_silting_2014}
Lidia Angeleri~H{\"u}gel, Frederik Marks, and Jorge Vit{\'o}ria.
\newblock Silting modules.
\newblock {\em Int. Math. Res. Not. IMRN}, (4):1251--1284, 2016.

\bibitem[AI12]{aihara_silting_2012}
Takuma Aihara and Osamu Iyama.
\newblock Silting mutation in triangulated categories.
\newblock {\em J. London Math. Soc.}, 85(3):633--668, June 2012.

\bibitem[AIR14]{adachi_tau-tilting_2014}
Takahide Adachi, Osamu Iyama, and Idun Reiten.
\newblock $\tau$-tilting theory.
\newblock {\em Compos. Math.}, 150(03):415--452, 2014.

\bibitem[AR91]{auslander_applications_1991}
Maurice Auslander and Idun Reiten.
\newblock Applications of contravariantly finite subcategories.
\newblock {\em Adv. Math.}, 86(1):111--152, March 1991.

\bibitem[AS81]{auslander_almost_1981}
Maurice Auslander and Sverre~O. Smal\o.
\newblock Almost split sequences in subcategories.
\newblock {\em J. Algebra}, 69(2):426--454, April 1981.

\bibitem[ASS06]{assem_elements_2006}
Ibrahim Assem, Daniel Simson, and Andrzej Skowro\'nski.
\newblock {\em Elements of the Representation Theory of Associative Algebras}.
\newblock Number~65 in London Mathematical Society Students Texts. Cambridge
  University Press, New York City, {USA}, 2006.

\bibitem[CILFS15]{cerulli_caldero-chapoton_2015}
Giovanni Cerulli~Irelli, Daniel Labardini-Fragoso, and Jan Schr{\"o}er.
\newblock Caldero-{C}hapoton algebras.
\newblock {\em Trans. Amer. Math. Soc.}, 367(4):2787--2822, 2015.

\bibitem[DF15]{derksen_general_2009}
Harm Derksen and Jiarui Fei.
\newblock General presentations of algebras.
\newblock {\em Adv. Math.}, 278:210--237, 2015.

\bibitem[DK74]{danaraj_shellings_1974}
Gopal Danaraj and Victor Klee.
\newblock Shellings of spheres and polytopes.
\newblock {\em Duke Math. J.}, 41(2):443--451, June 1974.

\bibitem[DK08]{dehy_combinatorics_2008}
Raika Dehy and Bernhard Keller.
\newblock On the combinatorics of rigid objects in 2-calabi-yau categories.
\newblock {\em Int. Math. Res. Not.}, 2008(11), 2008.

\bibitem[EJR16]{eisle_reduction_2016}
Florian Eisele, Geoffrey Janssens, and Theo Raedschelders.
\newblock A reduction theorem for $\tau$-rigid modules.
\newblock {\em arXiv:1603.04293}, March 2016.

\bibitem[FST08]{fomin_cluster_2008}
Sergey Fomin, Michael Shapiro, and Dylan Thurston.
\newblock Cluster algebras and triangulated surfaces. {Part I}: Cluster
  complexes.
\newblock {\em Acta Math.}, 201(1):83--146, September 2008.

\bibitem[Har77]{hartshorne_algebraic_1977}
Robin Hartshorne.
\newblock {\em Algebraic geometry}.
\newblock Springer-Verlag, New York, 1977.
\newblock Graduate Texts in Mathematics, No. 52.

\bibitem[Hil06]{hille_volume_2006}
Lutz Hille.
\newblock On the volume of a tilting module.
\newblock {\em Abh. Math. Semin. Univ. Hambg.}, 76(1):261--277, December 2006.

\bibitem[HKM02]{hoshino_t-structures_2002}
Mitsuo Hoshino, Yoshiaki Kato, and Jun-Ichi Miyachi.
\newblock On $t$-structures and torsion theories induced by compact objects.
\newblock {\em J. Pure Appl. Algebra}, 167(1):15--35, February 2002.

\bibitem[Hos82]{hoshino_tilting_1982}
Mitsuo Hoshino.
\newblock Tilting modules and torsion theories.
\newblock {\em The Bulletin of the London Mathematical Society},
  14(4):334--336, 1982.

\bibitem[HU05]{happel_2005}
Dieter Happel and Luise Unger.
\newblock On a partial order of tilting modules.
\newblock {\em Algebr. Represent. Theory}, 8(2):147--156, 2005.

\bibitem[IJY14]{iyama_intermediate_2014}
Osamu Iyama, Peter J{\o}rgensen, and Dong Yang.
\newblock Intermediate co-$t$-structures, two-term silting objects,
  $\tau$-tilting modules, and torsion classes.
\newblock {\em Algebra \& Number Theory}, 8(10):2413--2431, 2014.

\bibitem[IRTT15]{iyama_lattice_2016}
Osamu Iyama, Idun Reiten, Hugh Thomas, and Gordana Todorov.
\newblock Lattice structure of torsion classes for path algebras.
\newblock {\em Bull. Lond. Math. Soc.}, 47(4):639--650, 2015.

\bibitem[IT09]{ingalls_noncrossing_2009}
Colin Ingalls and Hugh Thomas.
\newblock Noncrossing partitions and representations of quivers.
\newblock {\em Compositio Math.}, 145(06):1533--1562, 2009.

\bibitem[IY08]{iyama_mutation_2008}
Osamu Iyama and Yuji Yoshino.
\newblock Mutation in triangulated categories and rigid {Cohen-Macaulay}
  modules.
\newblock {\em Invent. Math.}, 172(1):117--168, April 2008.

\bibitem[IZ16]{iyama_classifying_2016}
Osamu Iyama and Xiaojin Zhang.
\newblock Classifying $\tau$-tilting modules over the {Auslander} algebra of
  $k[x]/(x^n)$.
\newblock {\em arXiv:1602.05037}, February 2016.

\bibitem[Jas15]{MR3428959}
Gustavo Jasso.
\newblock Reduction of {$\tau$}-tilting modules and torsion pairs.
\newblock {\em Int. Math. Res. Not. IMRN}, (16):7190--7237, 2015.

\bibitem[Kas16]{kase_weak_2016}
Ryoichi Kase.
\newblock Weak orders on symmetric groups and posets of support $\tau$-tilting
  modules.
\newblock {\em arXiv:1605.05062}, May 2016.

\bibitem[KR07]{keller_cluster-tilted_2007}
Bernhard Keller and Idun Reiten.
\newblock Cluster-tilted algebras are {Gorenstein} and stably {Calabi–-Yau}.
\newblock {\em Adv. Math.}, 211(1):123--151, May 2007.

\bibitem[KY14]{koenig_silting_2014}
Steffen Koenig and Dong Yang.
\newblock Silting objects, simple-minded collections, $t$-structures and
  co-$t$-structures for finite-dimensional algebras.
\newblock {\em Documenta Mathematica}, 19:403--438, 2014.

\bibitem[LF09]{labardini-fragoso_quivers_2009}
Daniel Labardini-Fragoso.
\newblock Quivers with potentials associated to triangulated surfaces.
\newblock {\em Proc. London Math. Soc.}, 98(3):797--839, May 2009.

\bibitem[Miz14]{mizuno_classifying_2014}
Yuya Mizuno.
\newblock Classifying {$\tau$}-tilting modules over preprojective algebras of
  {D}ynkin type.
\newblock {\em Math. Z.}, 277(3-4):665--690, 2014.

\bibitem[M{\v S}15]{marks_torsion_2015}
Frederik Marks and Jan {\v S}{\v{t}}ov{\'{\i}}{\v{c}}ek.
\newblock Torsion classes, wide subcategories and localisations.
\newblock {\em arXiv:1503.04639}, March 2015.

\bibitem[MS16]{MR3490664}
Piotr Malicki and Andrzej Skowro{\'n}ski.
\newblock Cycle-finite algebras with finitely many {$\tau$}-rigid
  indecomposable modules.
\newblock {\em Comm. Algebra}, 44(5):2048--2057, 2016.

\bibitem[Nak11]{nakanishi_periodicities_2011}
Tomoki Nakanishi.
\newblock Periodicities in cluster algebras and dilogarithm identities.
\newblock In {\em Representations of algebras and related topics}, EMS Ser.
  Congr. Rep., pages 407--443. Eur. Math. Soc., Z\"urich, 2011.

\bibitem[NC12]{najera-chavez_c-vectors_2012}
Alfredo N\'ajera~Ch\'avez.
\newblock On the $c$-vectors and $g$-vectors of the {Markov} cluster algebra.
\newblock {\em S\'em. Lothar. Combin.}, 69:Art. B69d, 12, 2012.

\bibitem[Pla13]{plamondon_generic_2013}
Pierre-Guy Plamondon.
\newblock Generic bases for cluster algebras from the cluster category.
\newblock {\em Int. Math. Res. Not.}, 2013(10):2368--2420, January 2013.

\bibitem[RS91]{riedtmann_simplicial_1991}
Christine Riedtmann and Aidan Schofield.
\newblock On a simplicial complex associated with tilting modules.
\newblock {\em Comment. Math. Helv.}, 66:70--78, 1991.

\bibitem[Rud58]{rudin_unshellable_1958}
Mary~Ellen Rudin.
\newblock An unshellable triangulation of a tetrahedron.
\newblock {\em Bull. Amer. Math. Soc.}, 64(3):90--91, 1958.

\bibitem[Sma84]{smalo_torsion_1984}
Sverre~O. Smal\o.
\newblock Torsion theories and tilting modules.
\newblock {\em Bull. London Math. Soc.}, 16(5):518--522, September 1984.

\bibitem[Ung99]{unger_shellability_1999}
Luise Unger.
\newblock Shellability of simplicial complexes arising in representation
  theory.
\newblock {\em Adv. Math.}, 144:221--246, 1999.

\bibitem[Zha12]{zhang_tau-rigid_2012}
Xiaojin Zhang.
\newblock $\tau$-rigid modules for algebras with radical square zero.
\newblock {\em arXiv:1211.5622}, November 2012.

\end{thebibliography}

\end{document}